\begin{document}

\title{Invariance Principle for the Random Wind-Tree Process}

\author{
{\sc Christopher Lutsko$^*$ and B\'alint T\'oth$^{*\dagger}$}
\\[8pt]
$^*$University of Bristol, UK
\\
$^\dagger$R\'enyi Institute, Budapest, HU
}

\maketitle

\begin{abstract}
  \noindent Consider a point particle moving through a Poisson distributed array of cubes all oriented along the axes - the random wind-tree model introduced in Ehrenfest-Ehrenfest (1912) \cite{ehrenfest-ehrenfest_12}. We show that, in the joint Boltzmann-Grad and diffusive limit this process satisfies an invariance principle. That is, the process converges in distribution to a Brownian motion in a particular scaling limit. In a previous paper (2019) \cite{lutsko-toth_19} the authors used a novel coupling method to prove the same statement for the random Lorentz gas with spherical scatterers. In this paper we show that, despite the change in dynamics, the same strategy with some modification can be used to prove an invariance principle for the random wind-tree model.

\medskip\noindent
{\sc MSC2010: } 60F17; 60K35; 60K37; 60K40; 82C22; 82C31; 82C40; 82C41

\medskip\noindent
{\sc Key words and phrases:} wind-tree; Ehrenfest model; invariance principle; scaling limit; coupling; exploration process

\end{abstract}

%%%%%%%%%%%%%%%%%%%%%%%%%%%%%%%%%%%%%%%%%%%%%%%%%%%%%%%%%%%%%%%%%%%%%%%%%%%%%%%%
%%%%%%%%%%%%%%%%%%%%%%%%%%%%%%%%%%%%%%%%%%%%%%%%%%%%%%%%%%%%%%%%%%%%%%%%%%%%%%%%

\section{Introduction}
\label{s:Introduction}

In this paper we consider the motion of a point particle through an array of randomly placed, identically oriented cubes in $\R^3$ - the so-called random wind-tree process \cite{ehrenfest-ehrenfest_12}. In a recent paper \cite{lutsko-toth_19} the authors showed that the random Lorentz gas (i.e the same process with the cubes replaced by spheres \cite{lorentz_05}) satisfies an invariance principle in a particular scaling limit which is intermediate between the kinetic and purely diffusive time scales. In this paper we prove an invariance principle for the wind-tree process in a similar intermediate regime. The proof will follow similar lines. However there are two key differences: in the Lorentz gas, after collision with a randomly placed scatterer (in $3$ dimensions) the velocity is redistributed independently of the initial velocity while for the wind-tree process the velocities form a genuine Markov chain. On the other hand as the collisions are simpler in the wind-tree setting the necessary geometric estimates follow with less effort.

More formally let $\cP$ be a Poisson point process of intensity $\varrho>0$ in $\R^3$ (our results hold for general dimension $d \ge 3$, however to reduce notation we restrict to $d=3$). Let $\cQ_r$ be a cube of side length $r$ oriented parallel with the axes and let $\cP+\cQ_r$ be an array of \emph{obstacles/scatterers}. We consider the trajectory of a point particle $X^{r,\varrho}(t)$ starting at the origin ($X^{r,\varrho}(0)=0$) with a fixed initial velocity of unit length. The particle then flies in straight lines, reflecting elastically off of the obstacles. In this setting the origin is in $(\cP+\cQ_r)^c$ with probability tending to $1$, hence such a trajectory is well-defined (see {\cite[(2)]{lutsko-toth_19}} for more details).

A fundamental open problem for both the random wind-tree model and the random Lorentz gas is to prove an invariance principle in the diffusive limit. That is, in the limit

\begin{equation}\label{diffusive limit}
  \frac{X^{r,\varrho}(Tt)}{\sqrt{T}} \qquad , \qquad T \to \infty,
\end{equation}
does the scaled process converge weakly to a Wiener process? In our previous paper we showed that the Lorentz gas satisfies an invariance principle in the limit \eqref{diffusive limit} if we \emph{simultaneously} take the low-density limit in a particular scaling limit. The aim for this paper is to replicate that result for the wind-tree model.

%%%%%%%%%%%%%%%%%%%%%%%%%%%%%%%%%%%%%%%%%%%%%%%%%%%%%%%%%%%%%%%%%%%%%%%%%%%%%%%%
%------------------------FIGURE: EXAMPLE_PATH-----------------------------------
%%%%%%%%%%%%%%%%%%%%%%%%%%%%%%%%%%%%%%%%%%%%%%%%%%%%%%%%%%%%%%%%%%%%%%%%%%%%%%%%

\begin{figure}[ht!]
  \begin{center}    
    \includegraphics[width=0.9\textwidth]{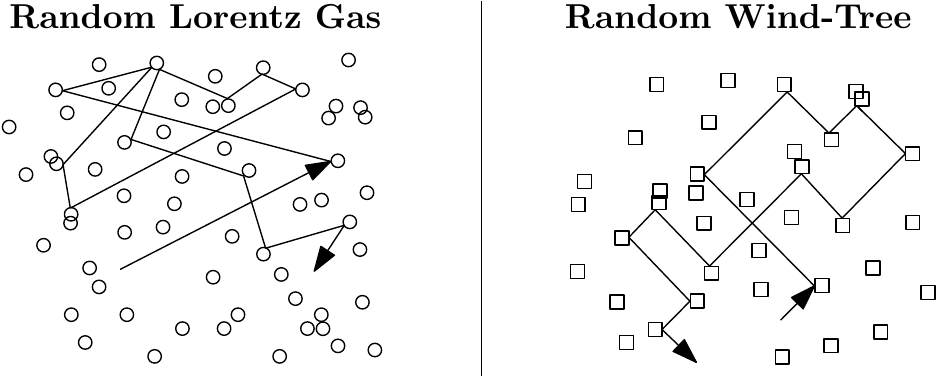}
  \end{center}
  \caption{%
    {\tt Typical trajectories of the random wind-tree and Lorentz gas models. Note the difference in the dynamics: in the wind-tree model the velocities are restricted to a finite set (in 2 dimensions there are only 4 possible velocities). While in the Lorentz gas the velocities are uniformly distributed on the sphere.}   
  }%
  
  \label{fig:rLG_vs_rWT}

\end{figure}

\subsection{Scaling and Main Result}
\label{ss:Scaling and Main Result}

Fix a probability vector $\mathbf{p} = (p_1 ,p_2, p_3)$ with $p_i>0$ for all $i$, and let $\abs{\mathbf{p}} = \sqrt{p_1^2+p_2^2+p_3^2}$. The state-space of velocities is then

\begin{equation}
  \Omega := \left\{ v \in S_1^{2}: |v_i| = \frac{p_i}{\abs{\mathbf{p}}} \right\}
\end{equation}
Fix the initial velocity $\dot{X}^{r,\varrho}(0^+) \in \Omega$. We study the process $t \mapsto X^{r,\varrho}(t)$ on $[0,T]$ in the joint Boltzmann-Grad and diffusive scaling limit:

\begin{gather}
  \begin{gathered} \label{BG-Diff-limit} 
    r   \to       0\qquad,    \qquad r^{2}\varrho  \to       \abs{\mathbf{p}}^{-1} \qquad, \qquad T(r)       \to       \infty  \\
  t          \mapsto   \frac{X(tT)}{\sqrt{T}},
  \end{gathered}
\end{gather}
note that $\abs{\mathbf{p}}^{-1}$ is the cross-sectional area of the cube as viewed by the particle, and we have dropped the dependence on $r$ and $\varrho$ in the notation (thus $X^{r,\varrho}(t) =X(t)$). With that, the main result of this paper is the following invariance principle:

%%%%%%%%%%%%%%%%%%%%%%%%%%%%%%%%%%%%%%%%%%%%%%%%%%%%%%%%%%%%%%%%%%%%%%%%%%%%%%%%
%-------------------------THEOREM:main theorem----------------------------------
%%%%%%%%%%%%%%%%%%%%%%%%%%%%%%%%%%%%%%%%%%%%%%%%%%%%%%%%%%%%%%%%%%%%%%%%%%%%%%%%

\begin{theorem}\label{thm:main theorem}
  Consider the intermediate scaling limit \eqref{BG-Diff-limit} such that $\lim_{r\to 0}T(r)r^{2} =0$ then

  \begin{equation}
    \left\{t \mapsto T^{-1/2}X(tT) \right\} \Longrightarrow \left\{t \mapsto W(t) \right\}
  \end{equation}
  as $r \to 0$ in the averaged-quenched sense (see below). Where $W(t)$ is a Wiener process with covariance matrix $M = \operatorname{diag}(v_1^2,v_2^2,v_3^2)$ in $\R^3$.

\end{theorem}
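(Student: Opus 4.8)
The plan is to establish an invariance principle for the wind-tree process by transplanting the coupling strategy developed in the authors' earlier Lorentz-gas paper, adapting it to the simpler but Markovian collision structure of the cube obstacles. The overarching idea should be to compare the true mechanical process $X(t)$ with an idealized \emph{Markovian flight process} $Y(t)$ in which the particle, between collisions, flies freely and at each collision picks up a fresh, independently sampled free-flight segment according to the Boltzmann-Grad limiting collision kernel. Because the velocity state-space $\Omega$ is finite, the limiting flight process is a continuous-time random walk whose velocity component evolves as a genuine Markov chain on $\Omega$; this is the principal structural difference from the Lorentz gas and must be tracked throughout.

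First I would set up the Boltzmann-Grad (kinetic) limit carefully: in the scaling $r^2\varrho \to |\mathbf p|^{-1}$ the mean free path converges to a finite constant, and I would show that the sequence of free-flight lengths becomes asymptotically i.i.d.\ exponential while the successive velocities form a Markov chain on $\Omega$ whose transition probabilities are dictated by the elastic reflection off an axis-oriented cube (a reflection simply flips the sign of one velocity coordinate). I would verify that this chain is irreducible and reversible with respect to its stationary distribution, compute its one-step covariance, and thereby identify the limiting diffusion matrix. A central-limit theorem for the additive functional $\sum v_k \tau_k$ of this Markov chain, via a martingale decomposition or a spectral-gap estimate, yields the invariance principle $T^{-1/2} Y(tT) \Rightarrow W(t)$ with covariance $M = \operatorname{diag}(v_1^2,v_2^2,v_3^2)$; the diagonal form should emerge because distinct coordinates of the velocity flip independently under the cube dynamics.

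The decisive and hardest part will be the coupling argument that controls the discrepancy between the true process $X$ and the flight process $Y$ up to the diffusive time horizon $T(r)$ subject to the constraint $T(r) r^2 \to 0$. The plan is to construct $X$ and $Y$ on a common probability space and to bound the probability of a \emph{recollision} or \emph{interference} event — the particle revisiting a previously seen obstacle, or two obstacles shadowing one another — over the time window $[0,T]$. This is precisely where the exploration/coupling method enters: one builds the obstacle configuration dynamically along the trajectory, revealing Poisson points only as they are needed, so that until the first interference the two processes coincide. The constraint $T(r) r^2 \to 0$ is exactly the budget under which the expected number of such interference events stays negligible; I expect the geometric estimates bounding recollision probabilities to be genuinely simpler here than for spheres, since the cube's flat faces make the shadowing geometry elementary, in line with the remark in the introduction that the geometric estimates \emph{follow with less effort}.

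Finally I would assemble the pieces: tightness of the rescaled family $\{T^{-1/2} X(tT)\}$ in the path space (Skorokhod or uniform topology), identification of finite-dimensional limits through the coupling with $Y$, and promotion of the convergence to the \emph{averaged-quenched} sense, meaning the law is averaged over the Poisson environment while the limit holds for the annealed measure on trajectories. The coupling guarantees that the error process $X - Y$ is uniformly small in probability on $[0,T]$, so the invariance principle for $Y$ transfers directly to $X$. The main obstacle remains the quantitative recollision control: making the interference probability summable against the diffusive time scale, and ensuring the Markov-chain CLT constants match the stated covariance $M$, will require the careful geometric and probabilistic estimates that constitute the technical core of the argument.
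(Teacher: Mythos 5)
Your overall architecture (couple $X$ to a Markovian flight process $Y$, prove an invariance principle for $Y$, transfer it via closeness in probability) matches the paper's, and your route to the invariance principle for $Y$ --- a Markov-chain CLT via martingale decomposition --- is a legitimate alternative to the paper's device of cutting $Y$ into i.i.d.\ excursions (returns of the velocity chain to $v_0$) and invoking Donsker. But there is a genuine gap at the heart of your coupling step: the budget is miscounted. You claim that under $T(r)r^2 \to 0$ ``the expected number of such interference events stays negligible,'' and your plan is to reveal the environment along the trajectory so that the two processes coincide until the first interference. Each collision has probability of order $r$ of being a recollision or a shadowed event, so over a horizon $T$ the expected number of interferences is of order $Tr$; this is negligible only when $T = o(r^{-1})$. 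Under the actual hypothesis $Tr^2 \to 0$ one can have $Tr \to \infty$ (e.g.\ $T = r^{-3/2}$), so with high probability many interferences occur, and the ``coincide until first interference'' argument collapses well before the stated time horizon. This is exactly the regime the paper spends Sections \ref{s:Beyond the Naive Coupling}--\ref{s:Proof of Intra} on.

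What is missing are the mechanisms that let the coupling survive mismatches rather than avoid them: (i) after a mismatch the two velocity processes run decoupled and are re-coupled once the velocities agree again, which for the wind-tree model requires repairing a parity obstruction --- the velocity chain alternates between two parity classes, so equality cannot be restored while the two processes share collision times; the paper inserts or deletes a collision epoch via Lemma \ref{lem:parity} to fix this; (ii) a dichotomy between \emph{direct} mismatches (involving only the immediately preceding scatterer or path segment), which have probability of order $r$ per collision but produce only an $O(1)$ displacement before re-coupling, and \emph{indirect} mismatches, which have probability of order $r^2$ and hence, with high probability, never occur before $T = o(r^{-2})$; the auxiliary ``forgetful'' process $Z$, built from i.i.d.\ legs, exists precisely to make this dichotomy exploitable (Propositions \ref{prop:intra-leg} and \ref{prop:inter-leg - beyond}); and (iii) the correct accounting: the discrepancy $\sup_{0\le t\le T}|X(t)-Y(t)|$ is of order (number of mismatches) $\times$ ($O(1)$ displacement each) $= O(Tr)$, and the requirement $Tr = o(\sqrt{T})$ is precisely $Tr^2 \to 0$. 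That is where the constraint in the theorem comes from --- not from making mismatches altogether negligible. Without (i)--(iii) your argument proves the theorem only for $T = o(r^{-1})$.
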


The proof follows from a joint construction of $t \mapsto X(t)$ and a second Markovian process which we introduce in Section \ref{ss:Markovian Flight Process}. In Section \ref{ss:Main Technical Result} we state and outline the proof of the main technical theorem of the paper (Theorem \ref{thm:main-technical}). Theorem \ref{thm:main theorem} is then a straightforward corollary of that theorem.

\begin{remark}
For the Lorentz gas we proved the same theorem with the asymptotic constraint $\lim_{r\to 0} T(r)r^2 \abs{\log r }^2 =0$. The reason for this logarithmic correction are those collisions for which the angle between incoming and outgoing velocities is small. In the wind-tree model the velocity of the point particle is restricted to a fixed discrete set, hence the log factor can be removed.
\end{remark}

In this context there are two relevant limits one could take:

\begin{enumerate}

\item[(Q)]
\emph{Quenched limit}: 
For almost all (i.e. typical) realizations of the underlying Poisson point process, with  averaging over the random initial velocity of the particle.

\item[(AQ)]
\emph{Averaged-quenched} (a.k.a. \emph{annealed}) \emph{limit}: 
Averaging over the random initial velocity of the particle \emph{and} the random placement of the scatterers.

\end{enumerate}
This paper (and our previous paper \cite{lutsko-toth_19}) are in the averaged-quenched setting.

\subsection{Related work}
\label{ss:History}

While we cannot hope to give an exhaustive account we present here some of the related work. The wind-tree model was introduced in the famous monograph by Paul and Tatiana Ehrenfest {\cite[Appendix to Section 5]{ehrenfest-ehrenfest_12}} as a simplified model to explain the return to equilibrium of the velocity distribution of a gas. It is noteworthy that, in defining the model P. and T. Ehrenfest considered randomly placed scatterers oriented along the axes (as we have done), however the periodic wind-tree model (often referred to as the Ehrenfest model) - where \emph{rectangular} scatterers are centered at the points of a hypercubic lattice - is the better studied model. This owes to the fact that the periodic setting can be studied using methods from parabolic dynamical systems. While the random wind-tree model is less well-understood, it remains of interest as a stochastic process and a model for diffusion in particle systems.

\textbf{2D Periodic Wind-Tree:} The periodic wind-tree model \emph{in 2 dimensions} has been the focus of a lot of recent research. In this setting the billiard flow is parabolic (i.e close orbits diverge polynomially). Thus (unlike for the periodic Lorentz gas - see for example the survey \cite{marklof_14}) the tools of hyperbolic dynamics cannot be used. Instead the standard approach is to use the so-called Katok-Zemliakov construction (see \cite{tabachnikov_95}), which allows one to replace the billiard flow by linear flow on translation surfaces.

There have not yet been any theorems concerning the diffusive limit or an invariance principle for the periodic wind-tree process. However there have been a number of interesting and contrasting results concerned with the speed of diffusion and exceptional trajectories. Hardy and Weber \cite{hardy-weber_80} showed that some specific directions diffuse at a rate of $\log T \log \log T$. While Delecroix-Hubert-Leli\`{e}vre \cite{delecroix-hubert-lelievre_14} showed that typical (with respect to angle) trajectories satisfy the superdiffusive polynomial diffusion rate $T^{2/3}$. Additionally Delecroix \cite{delecroix_13} showed that for any rectangular scatterer, there is a set of diverging trajectories with positive Hausdorff measure. While Hubert-Leli\`{e}vre-Troubetzkoy \cite{hubert-lelievre-troubetzkoy_11}  and then Avila and Hubert \cite{avila-hubert_17} showed that the billiard flow is recurrent for almost every direction. Finally Fr\k{a}czek and Ulcigrai \cite{fraczek-ulcigrai_14} proved that generically the billiard flow is not ergodic. 

\textbf{Random Wind-Tree and Lorentz gas:} At the moment there are fewer rigorous results about the random wind-tree model and the random Lorentz gas than their periodic counterparts. Gallavotti \cite{gallavotti_69}, \cite{gallavotti_70} used classical (probabilistic) methods to show that in the (annealed) Boltzmann-Grad limit (i.e \eqref{BG-Diff-limit} with $T$ constant) both models obey a linear Boltzmann equation with different collision terms. For a wide class of Lorentz gas models with spherically symmetric scattering potentials, Spohn \cite{spohn_78} and Boldrighini-Bunimovich-Sinai (for the hard-core random Lorentz gas) \cite{boldrighini-bunimovich-sinai_83} showed that in the Boltzmann-Grad limit the Lorentz gas converges in the annealed, respectively quenched sense to a Markovian flight process. To our knowledge all the previous work on these random models has been in the Boltzmann-Grad limit and for \emph{finite} time intervals. The holy grail - the invariance principle in the diffusive limit - remains completely open for both models.

More recently Marklof and Str\"{o}mbergsson \cite{marklof-strombergsson_19} prove convergence to a limiting transport process for a wide class of spherically symmetric potentials and scatterer configurations. In particular this approach subsumes these previous results on the random Lorentz gas \cite{gallavotti_69}, \cite{gallavotti_70}, \cite{spohn_78}, \cite{boldrighini-bunimovich-sinai_83} and covers many other cases (periodic or quasi-crystals) all with spherically symmetric scattering potentials.

\section{Coupling Construction}
\label{s:Coupling Construction}

\subsection{State-Space and Notation}
\label{ss:Collisions}

Returning now to the random wind-tree model, for the rest of the paper we assume the initial velocity is fixed to be $v_0\in \Omega$. This will aid in the exposition but can be assumed without loss of generality, since the time taken to reach this velocity is exponentially bounded.

At each collision one component of the velocity changes sign. Let $\vartheta_i:\R^3\to \R^3$ be such that $\vartheta_i(v)_j = (-1)^{\delta_{i,j}}v_j$ for $j=1,2,3$. During a collision the probability $\probab{v \mapsto \vartheta_i(v)} = p_i$. For any $v \in \Omega$ let $\Omega_{v}$ denote the set of accessible velocities after one collision starting from $v$, namely

\begin{equation}
  \Omega_v = \{ w \in \Omega : w = \vartheta_i(v) \mbox{ for some } 1 \le i \le 3  \}.
\end{equation}
Let $\mathrm{m}_v$ denote the measure on $\Omega_v$ which selects $\vartheta_i(v)$ with probability $p_i$. Moreover, for $v \in \Omega$ and $w \in \Omega_v$ let $B(v,w)$ be the face of the cube $\cQ_r$ such that a particle traveling with velocity $v$ colliding with that face would adopt the velocity $w$. Formally, for $v \in \Omega$ and $w = \vartheta_k(v)$ 

\begin{equation}
  B(v,w) = \{b \in \partial \cQ_r : b_k = - \frac{v_k}{\abs{v_k}}r \}.
\end{equation}

\subsection{Markovian Flight Process}
\label{ss:Markovian Flight Process}

Let $\{u_n \}_{n=0}^{\infty}$ be a realization of the following Markov chain on $\Omega$: $u_1 = v_0$ and then for all $i \ge 1$, $u_{i+1}$ are independently selected from $\Omega_{u_i}$ according to the measure $\mathrm{m}_{u_i}$. For later use let $u_0 \in \Omega_{v_0}$. Let 

\begin{equation} \label{exp}
  \{\xi_n \}_{n=1}^{\infty} \sim EXP(1)
\end{equation}
be i.i.d exponentially distributed \emph{flight times} and let 

\begin{equation} \label{disc MFP}
  Y_n := \sum_{i=1}^n y_i \quad , \quad y_n := \xi_n u_n
\end{equation}
denote the \emph{discrete Markovian Flight Process}. To define the continuous process, for $t\in \R$ let

\begin{equation}\label{tau and nu for Y}
  \tau_n := \sum_{i=1}^n \xi_i \quad , \quad \nu_t := \max\{ n : \tau_n \le t \} \quad , \quad \{t \} := t - \tau_{\nu_t},
\end{equation}
that is $\tau_n$ are the scattering times, $\nu_t$ is the label of the most recent scattering, and $\{t\}$ is the time since the previous scattering, at time $t$. Now define

\begin{equation}\label{MFP}
  Y(t) := Y_{\nu_t} + u_{\nu_t+1} \{t\}
\end{equation}
to be the \emph{(continuous) Markovian Flight Process}. Note that the processes $t \mapsto Y(t)$ and $\{Y_n\}_{n=1}^{\infty}$ do not depend on $r$.

For later use we introduce the following \emph{virtual scatterers}:

\begin{gather}
  Y^{\prime}_k := Y_k + \beta_k \quad , \quad \beta_k \sim UNI(-B(u_k,u_{k+1})) \quad , \quad k \ge 0 \\
  \cS^Y_n : = \{Y^{\prime}_k\in \R^3, \;\;\; 0 \le k \le n \}  \qquad, \qquad n \ge 0. \notag
\end{gather}
In words $Y^{\prime}_k$ is the position of a scatterer \emph{if it had caused} the $k^{th}$ collision (of course $Y$ is independent of any scatterers, thus the term virtual). Note also that we assume there is a virtual collision at time $0$, this has no effect on the definition of the model however will ease the notation. One difference with the random Lorentz gas is that the position of a scatterer associated to a velocity jump is not uniquely determined. Therefore we select from among the possible virtual scatterers uniformly. 

For later use we introduce the sequence of indicators $\epsilon_j = \one \{ \xi_j <1\}$ and the corresponding distributions $EXP(1|1):= \operatorname{distrib}(\xi | \epsilon=1)$ and similarly $EXP(1|0) = \operatorname{distrib}(\xi | \epsilon =0)$. We refer to $\underline{\epsilon}:= (\epsilon_j)_{j \ge 0}$ as the \emph{signature} of the sequence $(\xi_j)_{j\ge 0}$.

\subsection{Joint Construction}
\label{ss:Joint Construction}

Our goal for this section is to construct the physical wind-tree and Markovian processes on the same probability space. We construct the wind-tree process as an \emph{exploration process}: in that the process explores its environment as time moves forward. For convenience for what follows we will also construct a third \emph{auxiliary process}, $\{t \mapsto Z(t)\}$, coupled to the $X$ and $Y$ processes. The auxiliary process, which we call either the \emph{forgetful} or \emph{myopic} process, is only used in Sections \ref{s:Beyond the Naive Coupling} - \ref{s:Proof of Intra}.  Hence some readers may wish to ignore it until later. Indeed if we only wanted to prove Theorem \ref{thm:main theorem} for times of order $o(r^{-1})$ (we do this in Section \ref{s:No mismatches}) then this myopic process does not play a role and can be ignored.

The construction will proceed inductively on certain (as yet unspecified) time intervals. To simplify the explanation, first we will explain how the processes $X$ and $Z$ are constructed on a given time interval, given certain random data. Then, we will explain how the random data is generated to enable the coupling to $\{t \mapsto Y(t)\}$ and we will explain on which time intervals these processes are defined.

Throughout the construction we label the velocity of $\dot{X}(t) =: V(t)$, $\dot{Y}(t)=:U(t)$ and $\dot{Z}(t) = W(t)$.

\subsubsection{Building $X$ on $[\wh{\tau}_{n-1},\wh\tau_{n})$}
\label{sss:X construct}

We label the intervals of construction of $X$ by $[\wh\tau_{n-1}, \wh\tau_n)$. In Subsection \ref{sss:Joint Coupling} we will make precise what these $\wh\tau$ are.

To construct $X$ on an interval $[\wh\tau_{n-1},\wh\tau_{n})$, given a position $X(\wh\tau_{n-1}) = X_{n-1} \in \R^3$, a velocity $V(\wh \tau_{n-1}^+) \in \Omega$ and $\cS^X_{n-1} \subset \R^{n-1} \cup \{\bigstar\}$ a finite set of points (where $\bigstar$ is a fictitious point at infinity with $\inf_{x \in \R^3} \abs{x - \bigstar} = \infty$ which will aid in the exposition) perform the following steps:

\begin{enumerate} [{[}Step 1{]}]
  \item \textbf{Mechanical flight on $\cS^X_{n-1}$ in $[\wh\tau_{n-1},\wh\tau_{n})$:}
    \label{stepone - X}
    The trajectory $ t \mapsto X(t)$ on $t \in [\wh\tau_{n-1},\wh\tau_{n})$ is defined to be free motion, with initial position $X_{n-1}$ and velocity $V(\wh \tau_{n-1}^+)$, and with reflective collisions on $\cQ_r + \cS^X_{n-1}$.
  \item \textbf{Attempt Fresh Collision:}
    \label{steptwo - X}
    Suppose, we are given a velocity $\wh w_{n+1} \in \Omega_{V(\wh\tau_n^-)}$ and an impact parameter $\wh{\beta}_n \in - B(V(\wh\tau_n^-),\wh w_{n+1})$. Set
    \begin{align}
      X^{\prime \prime} := X(\wh\tau_{n}) + \wh{\beta}_n
    \end{align}
    Now

    \begin{itemize}
      \item If $\exists 0 <s \le \wh\tau_{n-1} : X(s) \in X^{\prime\prime}+\cQ_r $ then let $X_n^{\prime}:=\bigstar$, and $V(\wh\tau_n^+)=V(\wh\tau_n^-) $.
      \item If not, then $X_n^{\prime}:=X^{\prime\prime}$, and $V(\wh\tau_n^+)=\wh w_{n+1}$.
    \end{itemize}
    
    Now set $\cS^X_n = \cS^X_{n-1} \cup \{X_n^{\prime}\}$. 
\end{enumerate}

\noindent We say: on the interval $[\wh\tau_{n-1},\wh\tau_n)$ the process $\{t \mapsto X(t)\}$ \emph{attempts} a \emph{fresh collision} at $\wh\tau_n$  with \emph{data} $(\wh w_{n+1},\wh{\beta}_n)$.

We will make precise the distributions of $\wh w_{n+1}$ and $\wh{\beta}_n$ in the construction below. Note that if, given a $\wh w_{n+1}$ and a $\wh{\beta}_n$, we build $X$ on the interval $[\wh\tau_{n-1},\wh\tau_n)$ then, after the construction we have sufficient information to build $X$ on the interval $[\wh\tau_{n},\wh\tau_{n+1})$ (provided we are given another pair $\wh w_{n+2},\wh{\beta}_{n+1}$).

\subsubsection{Building $Z$ on $[\wt\tau_{n-1},\wt\tau_n)$}
\label{sss:Z construct}

We call the process $\{t \mapsto Z(t)\}$ forgetful in that the process only respects \emph{direct mismatches} (see Figure \ref{fig:direct} for a diagram). That is, recollisions with the immediately preceding scatterer, or shadowed events where the scattering is shadowed by the immediately preceding path segment.

\begin{figure}[ht!]
  \begin{center}    
    \includegraphics[width=0.9\textwidth]{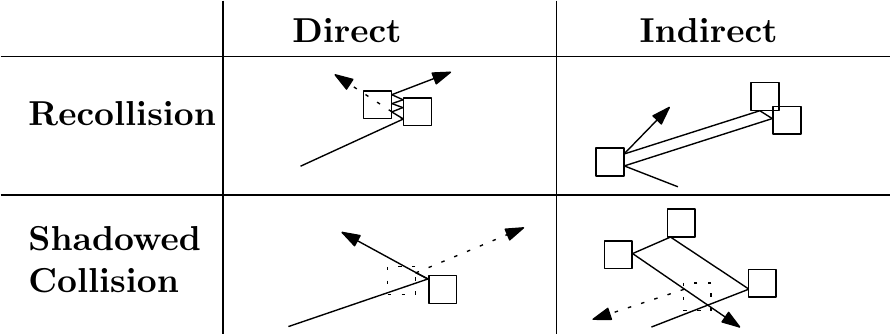}
  \end{center}
  \caption{%
    {\tt In the above diagram we show examples of direct and indirect, recollisions and shadowed events. In each case the path of the Markovian process is in dotted line while the wind-tree process is in solid line. Additionally, virtual scatterers are in dotted line while actual scatterers for the $X$ process are in solid line.}   
  }%
  
  \label{fig:direct}

\end{figure}

Suppose that we are given a time interval $[\wt\tau_{n-1},\wt\tau_n)$. Assume further, we are given a position $Z(\wt\tau_{n-1}) = Z_{n-1}$, velocity $W(\wt\tau_{n-1}^+) \in \Omega$, and a pair $\cS^Z_{n-1} = \{Z_{n-1}^\prime,Z_{n-2}^\prime\} \subset \R^3 \cup \{ \bigstar\}$.

\begin{enumerate} [{[}Step 1{]}]
\item \textbf{Mechanical flight on $\cS^Z_{n-1}$ in $[\wt\tau_{n-1},\wt\tau_n)$:}
    \label{stepone - Z}
    The trajectory $ t \mapsto Z(t)$ on $t \in [\wt\tau_{n-1},\wt\tau_n)$ is defined to be free motion starting at position $Z(\wt\tau_{n-1})$ and with velocity $W(\wt\tau_{n-1}^+)$ with reflective collisions on $\cQ_r + \cS^Z_{n-1}$.
  \item \textbf{Attempt Fresh Collision:}
    \label{steptwo - Z}
    Suppose that we are given a velocity $\wt w_{n+1} \in \Omega_{W(\wt\tau_n^-)}$ and an impact parameter $\wt{\beta}_n \in - B(W(\wt\tau_n^-),\wt w_{n+1})$. Set
    \begin{align}
      Z^{\prime \prime} := Z(\wt\tau_n) + \wt{\beta}_n
    \end{align}
    Now

    \begin{itemize}
      \item If there exists an $s \in  ( \wt\tau_{n-2} , \wt\tau_{n-1}] : Z(s) \in Z^{\prime\prime}+\cQ_r $ then let $Z_{n}^{\prime}:=\bigstar$, and $W(\wt\tau_n^+)=W(\wt\tau_n^-) $.
      \item If not, then $Z_{n}^{\prime}:=Z^{\prime\prime}$, and $Z(\wt\tau_n^+)=\wt w_{n+1}$.
    \end{itemize}
    
    Now set $\cS^Z_{n} = \{Z^{\prime}_{n} , Z_{n-1}^{\prime}\}$. 
\end{enumerate}

Similarly we say that on the interval $[\wt\tau_{n-1},\wt\tau_n)$ the process $\{t \mapsto Z(t)\}$ \emph{attempts} a \emph{fresh collision} at $\wt\tau_n$  with \emph{data} $(\wt w_{n+1},\wt{\beta}_n)$.

\subsubsection{Parity}

Consider just the processes $\{t \mapsto Y(t)\}$ and $\{t \mapsto X(t)\}$, the idea behind the coupling is the following:

\begin{itemize}[$\circ$]
  \item $X(0) = Y(0)$ and the velocities are initially parallel.
  \item $X$ and $Y$ then run parallel until one of two possible \emph{mismatches} occurs:
    \begin{itemize}[$\circ\circ$]
    \item A \emph{recollision}, which corresponds to a collision with a previously placed scatterer during \ref{stepone - X} of Subsection \ref{sss:X construct}.
    \item A \emph{shadowed collision}, which corresponds to $X_n^\prime = \bigstar$ in \ref{steptwo - X} of Subsection \ref{sss:X construct}.
    \end{itemize}
  \item After a mismatch the two velocity processes proceed independently.
  \item When the two velocities happen to coincide we recouple the two processes and they run parallel until the next mismatch.
\end{itemize}

However there is a problem with this setup as we have described it. Note that there are two \emph{parity classes}:  $(\vect{v}, (\vartheta_i(\vartheta_j(\vect{v})))_{i \neq j})$ and $(-\vect{v}, (\vartheta_i(\vect{v}))_{i =1,2,3})$. The Markov process $(u_n)_{n\in \N}$ alternates between these two classes. The problem is that if there is a parity mismatch between $V(t)$ and $U(t)$ at a given time, then as long as the two processes experience fresh collisions at the same times, only another mismatch can restore the parity. This is too long to wait. Therefore we need to alter the sequence of collision times to restore parity. For this we will make use of Lemma \ref{lem:parity}. For future use, we define the equivalence relation $u \parity v$ if $u$ and $v$ are in the same parity class.

\begin{lemma} \label{lem:parity}
Let $(\tau_j)_{j\ge1}$ be the points of a Poisson point process of intensity 1 on $\R_+$. Form a new sequence as follows: sample $\xi^\prime \sim EXP(1)$, independently of the sequence  $(\tau_j)_{j\ge1}$. Let the new sequence $(\tau^\prime_j)_{j\ge1}$ be as follows: 

\begin{itemize}
  \item
   If $\xi^\prime<\tau_1$ then $\tau^\prime_1=\xi^\prime$, and $\tau^\prime_j=\tau_{j-1}$ for $j\ge2$.(That is: insert $\xi^\prime<\tau_1$ as the first point and leave the rest as they are.)
  \item
   If $\xi^\prime>\tau_1$ then $\tau^\prime_j=\tau_{j+1}$ for $j\ge1$. (That is: delete the first point $\tau_1$ and leave the rest as they are.)
\end{itemize}
\end{lemma}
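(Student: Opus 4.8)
The lemma, as the surrounding discussion of parity makes clear, is asserting an \emph{invariance} property: the re-sampled sequence $(\tau'_j)_{j\ge1}$ is again a rate-$1$ Poisson point process on $\R_+$ (equivalently, its gaps are i.i.d.\ $EXP(1)$). This is what I would prove. The mechanism is that, since $\xi'$ and $\tau_1$ are i.i.d.\ exponentials, insertion and deletion each occur with probability $\tfrac12$, so the operation flips the parity of the point count while preserving the law of the configuration.

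The plan is to reduce everything to the first gap together with the memoryless property of the exponential. Write $\tau_1=\xi_1$ and $\tau_j-\tau_{j-1}=\xi_j$ with $(\xi_j)_{j\ge1}$ i.i.d.\ $EXP(1)$, and let $\xi'\sim EXP(1)$ be independent. Put $S:=\min(\xi',\tau_1)$ and $B:=\one\{\xi'<\tau_1\}$. By the competing-exponentials fact, $S\sim EXP(2)$, $B$ is an independent fair coin, and conditionally on $S$ and $B$ the residual lifetime of the loser is a fresh $EXP(1)$ independent of the past. Unfolding the two branches: on $\{B=1\}$ (insertion) the new sequence is $\{S\}\cup(S+\Pi)$, where by memorylessness $\Pi$ is a fresh $PPP(1)$; on $\{B=0\}$ (deletion) it is $S+\Pi'$ with $\Pi'$ a fresh $PPP(1)$.

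From here the argument splits into two short checks. First, the first gap is $G_1=S$ on $\{B=1\}$ and $G_1=S+\xi_2$ on $\{B=0\}$, so $G_1=S+C$ where $S\sim EXP(2)$ and $C$ equals $0$ or an independent $EXP(1)$ according to a fair coin; writing $\phi(\lambda)$ for the Laplace transform of $G_1$,
\[
\phi(\lambda)=\frac{2}{2+\lambda}\left(\frac12+\frac12\cdot\frac{1}{1+\lambda}\right)=\frac{1}{1+\lambda},
\]
so $G_1\sim EXP(1)$. Second, in both branches the remainder of the sequence, shifted to start at $G_1$, is a fresh $PPP(1)$ independent of $G_1$: on $\{B=1\}$ it is exactly $\Pi$, and on $\{B=0\}$ it is $\Pi'$ with its first point deleted, which by the renewal property of $\Pi'$ is again $PPP(1)$ and independent of its first point, hence of $G_1=S+(\text{first point of }\Pi')$. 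Since this conditional law is $PPP(1)$ regardless of $G_1$ and of $B$, I may marginalize over $B$ to conclude that $(\tau'_j)$ has first gap $EXP(1)$ followed by an independent $PPP(1)$, and is therefore $PPP(1)$.

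The one genuinely delicate point --- and the step I would write out most carefully --- is the deletion branch $\{B=0\}$: there the first new gap is the composite $S+\xi_2$ rather than a single exponential, and it is only after averaging over the fair coin $B$ that the two branches conspire to yield an exact $EXP(1)$ first gap with an independent Poisson remainder. The ``fresh $PPP(1)$'' claims in each branch are precisely where the memoryless property is invoked (for the loser's residual time when $B=1$, and for the independence of $\Pi'$'s first point from its remainder when $B=0$), so I would flag each such use explicitly rather than leave it implicit.
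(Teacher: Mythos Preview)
Your proof is correct. The route differs from the paper's, though both rest on the same underlying facts.

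The paper proceeds by brute-force computation of tail probabilities: it splits $\probab{\tau'_1>t}$, $\probab{\tau'_2-\tau'_1>t}$, and the joint $\probab{\tau'_1>t,\ \tau'_2-\tau'_1>s}$ into the insertion and deletion cases and checks directly that each equals the right exponential, then waves hands at the remaining gaps (``all the other increments are clearly i.i.d.\ exponentially distributed''). You instead encode both cases via the competing-exponentials pair $(S,B)=(\min(\xi',\tau_1),\one\{\xi'<\tau_1\})$, compute the first gap by a Laplace-transform identity, and handle the remainder by an explicit renewal argument conditional on $(B,G_1)$. What your approach buys is a cleaner separation of the two genuine ingredients (the $EXP(2)$-plus-fair-coin decomposition of $(\xi',\tau_1)$, and memorylessness for the leftover process), and it makes the ``clearly i.i.d.'' part of the paper's proof into an actual argument. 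What the paper's approach buys is that it requires no auxiliary lemmas at all---just conditional-probability manipulations---so a reader unfamiliar with the competing-exponentials folklore can verify every line.

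One small point of presentation: in your deletion branch you write the remainder as ``$\Pi'$ with its first point deleted''; it is slightly cleaner to note directly that $(\tau'_j-\tau'_1)_{j\ge2}=(\tau_{j+1}-\tau_2)_{j\ge2}$ is built from $\xi_3,\xi_4,\ldots$ alone and is therefore independent of $(S,\xi_2,B)$, hence of $G_1$. This avoids the extra hop through the renewal property of $\Pi'$.
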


\begin{proof}
  Consider the distribution of $\tau^{\prime}_1$

  \begin{align*}
    \probab{\tau^{\prime}_1> t} &= \probab{\xi > t, \;\; \xi < \tau_1} + \probab{\tau_2 >t,\;\;  \xi > \tau_1}\\
    &= e^{-t} \condprobab{\tau_1 >\xi}{\xi >t} + \probab{\xi>\tau_1}\condprobab{\tau_2>t}{\xi>\tau_1}
  \end{align*}
  where we have used the definition of conditional probability and the fact that $\xi$ is exponentially distribution. Now note that $\condprobab{\tau_2>t}{\xi>\tau_1} = \condprobab{\xi>t}{\xi>\tau_1}$ since $\tau_2$ and $\xi$ are both exponentially distributed conditioned to be larger that $\tau_1$. Therefore

  \begin{align*}
    \probab{\tau^{\prime}_1> t} &= e^{-t} \condprobab{\tau_1 >\xi}{\xi >t} + \probab{\xi>\tau_1}\condprobab{\xi>t}{\xi>\tau_1}\\
                              &= e^{-t} \condprobab{\tau_1 >\xi}{\xi >t} + e^{-t}\condprobab{\xi>\tau_1}{\xi>t}\\
                              &= e^{-t} \condprobab{\tau_1 >\xi}{\xi >t} + e^{-t}(1-\condprobab{\tau_1>\xi}{\xi>t})\\
                              &= e^{-t}.\\
  \end{align*}

  Turning now to the distribution $\tau_2^{\prime} -\tau_1^{\prime}$ (all the other increments are clearly i.i.d exponentially distributed)

  \begin{align*}
    \probab{\tau_2^{\prime} - \tau_{1}^{\prime} > t } & = \probab{\tau_1 - \xi > t , \tau_1 > \xi} + \probab{\tau_{3} -\tau_2 > t , \tau_1 < \xi}\\
    & = e^{-t} \probab{\tau_1 > \xi} + e^{-t} \probab{\tau_1 < \xi} = e^{-t}.
  \end{align*}

  Finally, we look at the joint distribution

  \begin{align*}
    \probab{\tau_1^{\prime}> t, \tau_2^{\prime} - \tau^{\prime}_1>s} = \probab{\xi> t, \tau_1-\xi >s} + \probab{\xi > \tau_1, \tau_2>t, \tau_3 -\tau_2>s}.
  \end{align*}
  By construction $\tau_3 - \tau_2$ is exponentially distributed and independent of $\tau_1,\tau_2,\xi$, thus

  \begin{align*}
    \probab{\tau_1^{\prime}> t, \tau_2^{\prime} - \tau^{\prime}_1>s} &= \probab{\xi> t, \tau_1-\xi >s} + \probab{\xi > \tau_1, \tau_2>t} e^{-s}\\
    &= \condprobab{\xi> t, \tau_1-\xi >s}{\xi< \tau_1} \probab{\xi< \tau_1} + \probab{\xi > \tau_1, \tau_2>t} e^{-s}.
  \end{align*}
  Conditioned on $\xi<\tau_1$, $\tau_1-\xi$ is exponentially distributed independently of $\xi$. Thus

  \begin{align*}
    \probab{\tau_1^{\prime}> t, \tau_2^{\prime} - \tau^{\prime}_1>s}
    &= e^{-s}\condprobab{\xi> t}{\xi< \tau_1} \probab{\xi< \tau_1} + \probab{\xi > \tau_1, \tau_2>t} e^{-s}\\
    &= e^{-s}\probab{\xi>t, \xi< \tau_1} + \probab{\xi > \tau_1, \tau_2>t} e^{-s}\\
    &= e^{-s} \probab{\tau_1^\prime > t}\\
    &= e^{-s}e^{-t}.
  \end{align*}

\end{proof}

\subsubsection{Joint Coupling}
\label{sss:Joint Coupling}

Assume $\{t \mapsto Y(t)\}$ is constructed as in Subsection \ref{ss:Markovian Flight Process}. We will construct the $X$ and $Z$ processes inductively on the intervals $[\tau_{2n}, \tau_{2n+2})$ as follows: First set

\begin{align}
  \begin{aligned}
    &X(0)=X_0= 0 \quad , \quad V(0^+) = u_1 \quad , \quad X_0^\prime = \wh\beta_0=\beta_0 \quad , \quad \cS_0^X = \{X_0^\prime\}                        \\
    &Z(0)=Z_0=0 \quad , \quad W(0^+) = u_1 \quad , \quad W_0^\prime = \wt\beta_0=\beta_0 \quad , \quad \cS_0^Z = \{Z_0^\prime,Z_{-1}^\prime \}
  \end{aligned}
\end{align}
where $Z_{-1}^\prime = \bigstar$. Let $n \in \N$ and sample an exponential time $\zeta_n \sim EXP(1)$ independent of the entire history up to this point. In which case there are $7$ possible situations arranged and labelled in the following table:

\begin{center}
  \begin{tabular}{ |p{4cm}|p{4cm}|p{4cm}| }
    \hline
    Parity at time $\tau_{2n}^+$ &  $\zeta_n \le \xi_{2n+1} $  & $ \zeta_n > \xi_{2n+1}$\\
    \hline
    $U\parity V \parity W$ &  \multicolumn{2}{|c|}{A}  \\
    \hline
    $U\not\parity V \parity W$ & B & C \\
    \hline
    $U\parity V \not\parity W$ & D & E \\ 
    \hline
    $U\parity W \not\parity V$ & F & G \\ 
    \hline
  \end{tabular}
\end{center}
For completeness of the construction we define all of these cases, however on our time scales we will (w.h.p) only see situations A, B, and C.

On the interval $[\tau_{2n}, \tau_{2n+2})$ the $X$ and $Z$ processes attempt fresh collisions at the following times:

\begin{center}
  \begin{tabular}{ |p{4cm}|p{4cm}|p{4cm}| }
    \hline
    Situation &  $X$  &  $Z$\\
    \hline
    A & $\tau_{2n+1}$, $ \tau_{2n+2}$ & $\tau_{2n+1}$, $ \tau_{2n+2}$ \\
    \hline
    B & $\tau_{2n}+\zeta_n$, $\tau_{2n+1}$, $\tau_{2n+2}$ & $\tau_{2n}+\zeta_n$, $\tau_{2n+1}$, $\tau_{2n+2}$ \\
    \hline
    C & $\tau_{2n+2}$ & $ \tau_{2n+2}$ \\
    \hline
    D & $\tau_{2n+1}$, $ \tau_{2n+2}$ & $\tau_{2n}+\zeta_n$, $\tau_{2n+1}$, $\tau_{2n+2}$ \\
    \hline
    E & $\tau_{2n+1}$, $ \tau_{2n+2}$ & $\tau_{2n+2}$ \\
        \hline
    F &  $\tau_{2n}+\zeta_n$, $\tau_{2n+1}$, $\tau_{2n+2}$ & $\tau_{2n+1}$, $ \tau_{2n+2}$  \\
    \hline
    G &  $\tau_{2n+2}$ & $\tau_{2n+1}$, $ \tau_{2n+2}$  \\
    \hline
  \end{tabular}
\end{center}

In what follows the following \textbf{coupling rule} will dictate the random variables $\wh\beta_n, \wh w_n, \wt\beta_n, \wt w_n$ used in the attempted fresh collisions.

\noindent\textbf{\underline{For the $Z$-process}:} If the $Z$-process is to attempt a fresh collision at time $t_a$, sample $\wt{w}$ from $\Omega_{W(t_a^-)}$ according to the measure $\mathrm{m}_{W(t_a^-)}$ and sample $\wt{\beta}$ from $-B(W(t_a^-),\wt{w})$ both independent of the past. We now attempt to couple $W$ with $U$ at $t_a$:

\begin{itemize}
  \item \textbf{\underline{Couple $W$ to $U$:}} If $W(t_a^-) = U(t_a^-) $ and $t_a = \tau_{n}$ for some $n$, attempt a fresh collision at $Z(t_a)$ using data  $(\beta_n,u_{n+1})$.
  \item \textbf{\underline{$W$ is independent of $U$:}} Otherwise attempt a fresh collision at $Z(t_a)$ using data $(\wt\beta,\wt w)$.
\end{itemize}

\noindent\textbf{\underline{For the $X$-process}:} If the $X$-process is to attempt a fresh collision at time $t_a$, sample $\wh{w}$ from $\Omega_{V(t_a^-)}$ according to the measure $\mathrm{m}_{V(t_a^-)}$ and sample $\wh{\beta}$ from $-B(V(t_a^-),\wh{w})$ both independent of the past. We now couple $V$ to either $U$ \emph{and/or} $W$ if possible: 

\begin{itemize}
  \item \textbf{\underline{Couple $V$ to $U$:}} If $V(t_a^-)   = U(t_a^-)$ and $t_a = \tau_{n}$ for some $n$ attempt a fresh collision at $X(t_a)$ using $(\beta_n,u_{n+1})$.
  \item \textbf{\underline{Couple $V$ to $W$:}} If $V(t_a^-) = W(t_a^-)$ and the $Z$ process also attempts a fresh collision \emph{independent of $U$} at time $t_a$, attempt a fresh collision at $X(t_a)$ using  $(\wt\beta,\wt w)$.
  \item \textbf{\underline{$V$ is independent of $U$ and $W$:}} Otherwise attempt a fresh collision at $X(t_a)$ using $(\wh\beta,\wh w)$.

\end{itemize}

After this construction we have generated two processes. For the wind-tree exploration process $\{t \mapsto X(t)\}$, the \emph{attempted fresh collision} times are $\{ \wh\tau_n\}_{n \in \N}$, by Lemma \ref{lem:parity} these form a (temporal) Poisson point process on $\R_+$; the scatterers are placed at positions $\{X_n^\prime\} \subset \R^3 \cup \{\bigstar\}$; and the impact parameters are $\{\wh\beta_n\}_{n \in \N}$. Moreover, the \emph{attempted} velocities after collisions are $\{\wh w_n\}_{n \in \N}$, these velocities are attempted since, in \ref{steptwo - X} the attempted collision may be rejected (i.e $X_n^\prime = \bigstar$). Because of the Poisson distribution of the scatterers in $\R^3$ this process is distributed like the original wind-tree model as described in the introduction.

For the process $\{t \mapsto Z(t)\}$, the \emph{attempted fresh collision} times are $\{ \wt\tau_n\}_{n \in \N}$, which by Lemma \ref{lem:parity} form a (temporal) Poisson point process on $\R_+$; the scatterers are placed at positions $\{Z_n^\prime\} \subset \R^3 \cup \{\bigstar\}$; and the impact parameters are $\{\wt\beta_n\}_{n \in \N}$. The \emph{attempted} velocities for the $Z$-process are $\{\wt w_n \}_{n \in \N}$.

\subsection{Main Technical Result and Method Proof}
\label{ss:Main Technical Result}

The main result we prove is the following

%%%%%%%%%%%%%%%%%%%%%%%%%%%%%%%%%%%%%%%%%%%%%%%%%%%%%%%%%%%%%%%%%%%%%%%%%%%%%%%%
%-------------------------------THEOREM:main-technical--------------------------
%%%%%%%%%%%%%%%%%%%%%%%%%%%%%%%%%%%%%%%%%%%%%%%%%%%%%%%%%%%%%%%%%%%%%%%%%%%%%%%%

\begin{theorem} \label{thm:main-technical}
  Let $T=T(r)$ be such that $\lim_{r \to 0} T(r)=\infty$ and $\lim_{r\to 0} r^2 T(r) = 0$. Then for any $\delta>0$
  \begin{equation}\label{X to Y}
    \lim_{r\to 0} \probab{\sup_{0\le t \le T}\abs{X(t) -Y(t)}>\delta \sqrt{T}} = 0.
  \end{equation}
\end{theorem}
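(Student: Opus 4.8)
\section{Proof strategy for Theorem~\ref{thm:main-technical}}

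The plan is to exploit the joint construction of Section~\ref{sss:Joint Coupling}, in which $X$, $Y$ and $Z$ are driven by a common stream of collision data, so that the three processes move in parallel except on the time intervals where a \emph{mismatch} has desynchronised their velocity processes $V=\dot X$, $U=\dot Y$ and $W=\dot Z$. Since all velocities lie in the bounded set $\Omega$, on any such interval $\abs{V-U}\le 2$, while elsewhere $X-Y$ is frozen; hence
\[
  \sup_{0\le t\le T}\abs{X(t)-Y(t)}\le 2\int_0^T \one\{V(s)\ne U(s)\}\,ds ,
\]
and the analogous bounds hold for the pairs $(X,Z)$ and $(Z,Y)$. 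By the triangle inequality it therefore suffices to control the total \emph{mismatch time} separately for $Z$ against $Y$ (governed by \emph{direct} mismatches: recollisions with the last two scatterers and shadowings by the immediately preceding segment, which are exactly the events the forgetful process $Z$ respects) and for $X$ against $Z$ (governed by \emph{indirect} mismatches involving older scatterers or segments). The whole estimate thus reduces to (a) counting mismatches and (b) showing that each one is repaired, i.e.\ the velocities recouple, in $O(1)$ time.

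For the recoupling time I would argue that once a mismatch has occurred the relevant pair of velocity chains evolves as a finite-state Markov chain on $\Omega\times\Omega$ whose only absorbing set is the diagonal $\{U=V\}$ (resp.\ $\{W=V\}$). The sole obstruction is that the two chains may sit in opposite parity classes, which no common sequence of fresh collisions can reconcile; this is precisely what the insertion/deletion device of Lemma~\ref{lem:parity}, encoded in situations B and C through the auxiliary clock $\zeta_n$, is designed to fix. It realigns parity at the cost of a single extra or missing scattering, after which the finite chains coalesce in $O(1)$ collisions with exponentially decaying tails. Consequently each mismatch contributes $O(1)$ to the corresponding mismatch time, and, mismatches being rare, a second one almost never interrupts an ongoing recoupling.

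The geometry enters in the two counting estimates. For direct mismatches I expect the per-collision probability to be $O(r)$: because the admissible velocities form a fixed discrete set, the angle between successive flight directions is bounded away from $0$, so the trajectory can return to within $r$ of the preceding segment (or of the last scatterers) only when the intervening flight time is itself $O(r)$, an event of probability $O(r)$ for an $EXP(1)$ variable. Summing over the $\asymp T$ collisions up to time $T$ gives $O(rT)$ direct mismatches, whence $\sup_t\abs{Z(t)-Y(t)}=O(rT)=o(\sqrt T)$, the last step using $r^2T\to0$ (equivalently $r\sqrt T\to0$); it is exactly here that the discreteness of $\Omega$ removes the $\abs{\log r}^2$ factor present for the Lorentz gas. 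For indirect mismatches I would invoke a local limit theorem for the Markovian flight process $Y$: the displacement $Y_n-Y_i$ is a sum of $\abs{n-i}$ steps of the velocity chain with covariance of order $\abs{n-i}$, so its density near any returning point is $O(\abs{n-i}^{-3/2})$ in $d=3$; since the tube swept by one flight segment against a cube of side $r$ has volume $O(r^2)$, the probability that segment $n$ recollides with, or is shadowed by, scatterer/segment $i$ is $O(r^2\abs{n-i}^{-3/2})$. As $\sum_{k\ge3}k^{-3/2}<\infty$, the expected number of indirect mismatches up to time $T$ is $O(r^2T)\to0$, so with probability tending to $1$ there are none and $X\equiv Z$ on $[0,T]$. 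Combining the two bounds yields \eqref{X to Y}. (For $T=o(r^{-1})$ even the direct count $O(rT)$ vanishes, recovering $X=Y$ with high probability; this is the base case treated in Section~\ref{s:No mismatches}.)

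The main obstacle is the local-limit/geometry estimate controlling the \emph{indirect} returns. It must hold uniformly in the separation $\abs{n-i}$, and it must survive the facts that the scatterer positions $Y_i^\prime$ are the flight points perturbed by the impact parameters $\beta_i$, and that after earlier mismatches the exploration path $X$ is no longer literally equal to $Y$. The natural remedy is to run the returns argument on the genuinely Markovian process $Y$, on which the density bound is clean, and to transfer it to $X$ using that mismatches are rare, so that outside an event of vanishing probability the two coincide on the relevant window. Making this transfer quantitative, together with the precise $O(r)$ geometric bound for the direct events, is the technical heart of the argument.
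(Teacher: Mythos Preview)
Your strategy is correct and is essentially the paper's: bound $\abs{X-Y}$ via $\abs{Z-Y}$ (governed by \emph{direct} mismatches, each of probability $O(r)$ because $\eta_j=1$ forces $\xi_{j-1}<Cr$) plus $\abs{X-Z}$ (governed by \emph{indirect} mismatches, of total probability $O(r^2T)\to 0$, so $X\equiv Z$ with high probability). The paper carries this out, but with two technical devices in place of your local limit theorem and your Markov-chain coalescence argument.

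First, instead of relying on recoupling of $(U,W)$ as a finite-state chain, the paper decomposes $t\mapsto Z(t)$ into \emph{legs}: maximal blocks that begin and end with two consecutive flight times $>1$ and with outgoing velocity $v_0$. This guarantees deterministically that $W=U=v_0$ at every leg boundary and that no direct mismatch can occur there, so the displacement incurred by a direct mismatch at step $j$ is bounded by the remaining time in the current leg, $\sum_{i=j}^{\gamma_{\nu_j'}}\xi_i$. This replaces your coalescence-time estimate by a structural fact and yields $\sup_t\abs{Y-Z}\le \sum_j \eta_j\cdot(\text{tail of leg})$, then Markov's inequality gives $o(\sqrt{T})$.

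Second, instead of the LLT bound $O(r^2\abs{n-i}^{-3/2})$, the paper uses Green's-function / occupation-measure bounds ($G,H\le K+L$ with $K(dx)=C\min\{1,\abs{x}^{-1}\}dx$) summed over an $r$-lattice; these are of course the time-integrated form of your LLT. The paper must also handle the regime of small $\abs{n-i}$ where the LLT is unavailable: there the key input is that any three \emph{distinct} consecutive velocities in $\Omega$ are automatically linearly independent (since each collision flips one coordinate), so the map $(\xi_{j-2},\xi_{j-1},t)\mapsto \xi_{j-2}u_{j-2}+\xi_{j-1}u_{j-1}+tu_j$ has bounded Jacobian and the return probability is $O(r^2)$ uniformly (the paper's Lemma~\ref{lem:lin ind}). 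This is exactly the ``uniform in $\abs{n-i}$'' issue you flag as the main obstacle.

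Finally, the transfer from $Y$ to $X$ that you worry about is again handled by the leg structure: the paper defines stopping times $\rho$ (first leg with an intra-leg discrepancy $\cX\not\equiv Z$) and $\sigma$ (first leg with an inter-leg incompatibility), proves $\probab{\rho\wedge\sigma\le CT}=O(r^2T)$ via two separate propositions (intra-leg and inter-leg), and concludes $X\equiv Z$ on $[0,T]$ with high probability. Your outline would arrive at the same conclusion; the leg decomposition is what turns the heuristic ``transfer'' into independent pieces on which the Markovian estimates apply cleanly.
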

From here Theorem \ref{thm:main theorem} follows as a consequence of the classical Donsker's invariance principle \cite{billingsley_68}: that is, the process $t \mapsto Y(t)$ is a true Markov process, hence Donsker's original invariance principle does not apply directly, however in what follows we will show how to separate $Y$ into i.i.d mean $0$ pieces with finite second moment. Thus Donsker's principle will imply that $t \mapsto \frac{Y(tT)}{\sqrt{T}}$ converges to a Wiener process in the diffusive scaling. Therefore the process $t\mapsto X(t)$ does as well. We omit the details of this final step and the rest of the paper is devoted to proving Theorem \ref{thm:main-technical}.

The strategy of proof is the same as in \cite{lutsko-toth_19}. We begin with the joint realization of the Markovian flight process and the wind-tree exploration process described above. During the two mismatch events (recollisions and shadowed scatterings) the two velocity processes diverge. In either case the two processes are decoupled until recoupling is possible. At which point the two processes are recoupled and proceed parallel to each other until the next mismatch.

The proof then follows two steps. In Section \ref{s:No mismatches} we show that such mismatches occur only on time scales of order $r^{-1}$. Hence until such times both process are (w.h.p) in the the same position and Theorem \ref{thm:main-technical} follows immediately for $T=o(r^{-1})$. Note that this intermediate result is a statement about the Markovian flight process. During the rest of the paper we show that on time scales of order $o(r^{-2})$ only (geometrically) simple mismatches occur. During such mismatches the separation between $X$ and $Y$ is of order $\cO(1)$. Hence on the time scales of Theorem \ref{thm:main-technical} there are $o(Tr)$ mismatches. During each mismatch the two processes separate by a distance of order $\cO(1)$, hence up to $T = o(r^{-2})$,  $\frac{\abs{X(T(r))-Y(T(r))}}{\sqrt{T}} \to 0$, thus proving \eqref{X to Y}. Sections \ref{s:Beyond the Naive Coupling}-\ref{s:Proof of Intra} are devoted to formalizing this argument.

The reason for introducing the forgetful process $\{t \mapsto Z(t)\}$ is that the forgetful process will satisfy additional independence properties exploited in the proof. Thus during the second stage of the prove, we will in fact show that the forgetful and Markovian processes do not diverge too much. Then we show that with high probability the wind-tree and forgetful processes are in fact the same on these time scales (i.e we show that with probability tending to $1$ as $r \to 0$, the direct mismatches defining the $Z$-process are the only ones seen by the $X$-process).

\noindent \emph{Remark on dimension:} As with the Lorentz gas, because of the recurrence of the random walk the same proof does not yield the result in 2 dimensions. For the Lorentz gas the geometry of mismatches imposed another reason that the proof cannot be extended to 2 dimensions. However for the wind-tree model the mismatches have a far simpler geometry and thus this obstruction is not present in 2 dimensions. 

\subsection{$r$-consistency and $r$-compatibility}
\label{ss:Inconsistency}

The proof will hinge on two definitions which we present now for a general process (i.e this could be a segment of any of the above mentioned processes). Let
\begin{align*}
  n\in\N,  \qquad
  \tau_0\in\R, \qquad
  \cZ_0\in\R^3,  \qquad
  U_0,\dots, U_{n+1} \in \Omega  \qquad
  t_1,\dots,t_n\in\R_+,
\end{align*}
be given, such that either $U_{i+1} \in \Omega_{U_i}$ or $U_{i+1}=U_i$ for all $0\le i \le n$. Moreover fix a set of vectors $\beta_j \in B(U_{j},U_{j+1})$ (if $U_j = U_{j+1}$ we set $\beta_j = \bigstar$) and define for $j=0, \dots, n$, 
\begin{align*}
  \tau_j:= \tau_0+\sum_{k=1}^j t_k,  \qquad
  \cZ_j:= \cZ_0+\sum_{k=1}^j t_kU_k,     \qquad
  \cZ_j^{\prime} :=  \cZ_j+ \beta_j
\end{align*}
and for $t\in[\tau_j, \tau_{j+1}]$, $j=0, \dots, n$,  
\begin{align*}
  \cZ(t) := \cZ_j+ (t-\tau_j)U_{j+1}. 
\end{align*}
We call the piece-wise linear trajectory $\big(\cZ(t): \tau_0^- < t < \tau_n^+\big)$ mechanically \emph{$r$-consistent} if 
\begin{align}
  \label{consist}
  \not\exists\; t \in [\tau_0,\tau_n],\; j \in \{0,\dots,n\} : \cZ(t)-\cZ_j^{\prime} \in \cQ_r^o
\end{align}
($\cQ_r^o$ denotes the interior) and \emph{$r$-inconsistent} if \eqref{consist} fails.

Given two finite pieces of mechanically $r$-consistent trajectories $\big(\cZ_{a}(t): \tau_{a,0}^- < t < \tau_{a,n_a}^+\big)$ and $\big(\cZ_{b}(t): \tau_{b,0}^- < t < \tau_{b,n_b}^+\big)$, defined over non-overlapping time intervals: $[\tau_{a,0},\tau_{a,n_a}] \cap [\tau_{b,0},\tau_{b,n_b}]=\emptyset$ with $\tau_{a,n_a}\le\tau_{b,0}$,  we will call them mechanically \emph{$r$-compatible} if
\begin{align}
  \label{compat}
  \begin{aligned}
    \not\exists \; t \in [\tau_{a,0},\tau_{a,n_{a}}], j \in \{0,\dots n_b\} : \cZ_a(t) - \cZ_{b,j}^\prime \in \cQ_r^o ,\\  \mbox{ and } \not\exists \; t \in [\tau_{b,0},\tau_{b,n_{b}}], j \in \{0,\dots n_a\} : \cZ_b(t) - \cZ_{a,j}^\prime \in \cQ_r^o 
\end{aligned}
\end{align}
mechanical trajectories are $r$-\emph{incompatible} if \eqref{compat} fails.

\section{No Mismatches Till $T=o(r^{-1})$}
\label{s:No mismatches}

\subsection{Excursions}
\label{ss:Excursions}

Unlike in the 3-dimensional Lorentz gas case the directions of path segments of the Markovian flight process are not independent. To decompose the process $t \mapsto Y(t)$ into i.i.d segments we introduce \emph{excursions}. Let

\begin{equation}\label{gamma def}
  \gamma : = \min \{ i>1 : u_{i+1} = v_0 \}
\end{equation}
and define a \emph{pack} to be a collection

\begin{equation*}
  \varpi := \left( \gamma ; \{u_i\}_{i=1}^{\gamma}, \{\beta_i\}_{i=1}^{\gamma}  , \{ \xi_i \}_{i=1}^{\gamma}\right),
\end{equation*}
$u_{\gamma} \in \Omega_{v_0}$, and for all $i >1$, $u_{i} \neq v_0$ and $u_{i-1} \in \Omega_{u_i}$. Given a pack we consider the process $t \mapsto Y(t)$ associated to it via the rules set forth in Section \ref{ss:Markovian Flight Process} - call the process built from such a pack, \emph{an excursion}.

\subsection{Concatenation}
\label{ss:Concatenation}

For $n=1,2,3,\dots$ consider infinitely many independent packs:
\begin{equation*}
  \varpi_n = \left( \gamma_n, \{u_{n,i}\}_{i=1}^{\gamma_n},\{\beta_{n,i}\}_{i=1}^{\gamma_n} , \{\xi_{n,i}\}_{i=1}^{\gamma_n}\right).
\end{equation*}
For each pack define the associated flight process $t \mapsto Y_n(t)$ together with the discrete process $\{Y_{n,i}\}_{i=0}^{\gamma_n}$. Denote

\begin{equation*}
  \theta_n := \sum_{i=1}^{\gamma_n} \xi_{n,i}, \qquad  \qquad \overline{Y_n} := Y_{n,\gamma_n}.
\end{equation*}
Define the following variables

\begin{gather*}
  \begin{gathered}
    \Gamma_0 = 0, \qquad  \qquad \Gamma_n = \Gamma_{n-1} + \gamma_n, \quad \mbox{ for } n \ge 1\\
    \nu_n := \max \{m: \Gamma_n \le n \}, \qquad \qquad \{n \} := n - \Gamma_{\nu_n}.
  \end{gathered}
\end{gather*}
Likewise

\begin{gather*}
  \begin{gathered}
    \Theta_0 = 0, \qquad  \qquad \Theta_n = \Theta_{n-1} + \theta_n, \quad \mbox{ for } n \ge 1\\
    \nu_t := \max \{m: \Theta_n \le t \}, \qquad \qquad \{t \} := t - \Theta_{\nu_t}.
    \end{gathered}
\end{gather*}

Now define the following three processes: the \emph{end-point process} with $\Xi_0=0$

\begin{equation*}
  \Xi_n := \sum_{k=1}^n \overline{Y_k},
\end{equation*}
the \emph{concatenated discrete Markovian flight process} with $Y_0=0$

\begin{equation*}
  Y_n := \Xi_{\nu_n} + Y_{\nu_n+1,\{n\}},
\end{equation*}
and the continuous \emph{concatenated Markovian flight process} with $Y(0)=0$

\begin{equation*}
  Y(t):= \Xi_{\nu_t} + Y_{\nu_t+1}(\{t\}).
\end{equation*}
The advantage of this decomposition is that the different excursions making up the process $Y$ are i.i.d steps with exponentially decaying tails.

%%%%%%%%%%%%%%%%%%%%%%%%%%%%%%%%%%%%%%%%%%%%%%%%%%%%%%%%%%%%%%%%%%%%%%%%%%%%%%%%
%----------------------SUBSECTION:Occupation Measures---------------------------
%%%%%%%%%%%%%%%%%%%%%%%%%%%%%%%%%%%%%%%%%%%%%%%%%%%%%%%%%%%%%%%%%%%%%%%%%%%%%%%%

\subsection{Occupation Measures}
\label{ss:Occumation Measures}

Define the following occupation measures for a set $A \subset \R^3$

\begin{align}\notag
  & G(A) := \expect{ \abs{\{ 1 \le k < \infty : Y_k \in A \}}} ,
  & H(A) := \expect{ \abs{\{ 0 < t <\infty : Y(t) \in A \}}},\\
  \notag
  & g(A) := \expect{ \abs{\{ 1 \le k \le \gamma_{1} : Y_k \in A \}}},
  & h(A) := \expect{ \abs{\{ 0 < t < \Theta_1 : Y(t) \in A\}}},\\
  \notag
  & R(A) := \expect{ \abs{\{ 1 \le k < \infty : \Xi_k \in A \}}}.
\end{align}

%%%%%%%%%%%%%%%%%%%%%%%%%%%%%%%%%%%%%%%%%%%%%%%%%%%%%%%%%%%%%%%%%%%%%%%%%%%%%%%%
%-------------------LEMMA: Greenbounds------------------------------------------
%%%%%%%%%%%%%%%%%%%%%%%%%%%%%%%%%%%%%%%%%%%%%%%%%%%%%%%%%%%%%%%%%%%%%%%%%%%%%%%%

\begin{lemma} \label{lem:greenbounds}

  The following upper bounds hold for any measurable set $A \subset \R^3$

  \begin{align}
    & R(A) \le K(A) + L_{v_0}(A) \label{R bound},\\
    & g(A) \le M(A) + L_{v_0}(A), \qquad  \qquad h(A) \le M(A) + L_{v_0}(A)  \label{g bound},\\
    & G(A) \le K(A) +L_{v_0}(A),   \qquad  \qquad H(A) \le K(A) + L_{v_0}(A)  \label{G bound},
  \end{align}

  where

  \begin{gather*}
    K(dx) :=C\min\{1,\abs{x}^{-1}\}dx  \quad, \quad M(dx) :=  Ce^{-c\abs{x}}dx\\
    L_{v_0}(A) : = C\int_0^{\infty}\one \{tv_0 \in A\} e^{-ct}dt 
  \end{gather*}

\end{lemma}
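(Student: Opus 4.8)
The plan is to read all five bounds off the renewal structure of the concatenated process, together with a local central limit estimate for the embedded random walk of excursion endpoints. The starting observation is that the packs $\varpi_n$ are i.i.d., so the increments $\overline{Y_k}$ are i.i.d.\ $\R^3$-valued and the end-point process $\Xi_n=\sum_{k=1}^n\overline{Y_k}$ is a genuine random walk. Each increment has exponentially decaying tails: the return time $\gamma$ to $v_0$ of the finite, irreducible velocity chain has geometric tails, and conditionally on $\gamma$ the displacement is a sum of $\gamma$ independent $\mathrm{Exp}(1)$-weighted velocity vectors, so $\probab{\abs{\overline{Y_1}}>s}$ decays exponentially. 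Crucially the walk is \emph{centred}: by the renewal-reward theorem $\lim_{t\to\infty}Y(t)/t=\expect{\overline{Y_1}}/\expect{\theta_1}$ almost surely, while by ergodicity of the velocity chain (its transition matrix is symmetric, hence doubly stochastic, so the uniform, sign-symmetric measure on the eight velocities is stationary and mean-zero) the same limit equals the stationary mean of $U$, namely $0$; hence $\expect{\overline{Y_1}}=0$.

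With this in hand I would prove the one-excursion bounds \eqref{g bound} first. Splitting off the first leg $k=1$, the point $Y_1=\xi_1v_0$ lies on the ray $\{tv_0:t>0\}$ and its law is exactly $e^{-t}\,dt$ pushed forward along that ray, which is dominated by $L_{v_0}$; the continuous analogue for $t\in[0,\tau_1]$ is the same ray segment. For the remaining steps $k\ge 2$ (and the corresponding time intervals) the exponential tail of the excursion length forces the occupation to concentrate near the origin, so that $g$ and $h$ are bounded by a measure with exponentially decaying density, i.e.\ by $M$. The one delicate point is that very short excursions produce displacements supported on lower-dimensional subspaces; these degenerate contributions are swept into $L_{v_0}$ (for $k=1$) or, once enough flights have occurred to span $\R^3$, acquire a bounded density controlled by $M$.

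The heart of the argument is \eqref{R bound}. Here I would invoke a local central limit theorem for the centred i.i.d.\ walk $\Xi_n$: its step distribution has nondegenerate covariance and exponential tails, so the density of $\Xi_n$ is bounded above by $Cn^{-3/2}$ uniformly, with Gaussian decay away from the origin. Summing, $R(dx)=\sum_{n\ge1}\probab{\Xi_n\in dx}$ behaves like $\sum_n n^{-3/2}e^{-c\abs{x}^2/n}\,dx$, and the standard evaluation of this sum (dominated by the terms $n\sim\abs{x}^2$) yields precisely the three-dimensional Green's-function tail $\min\{1,\abs{x}^{-1}\}$, i.e.\ $K$; the finitely many degenerate low-$n$ contributions are absorbed into $L_{v_0}$.

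Finally, \eqref{G bound} follows from \eqref{R bound} and \eqref{g bound} by a convolution (renewal) identity. Writing the position at any instant as the completed-excursion endpoint plus an independent within-excursion offset gives $G\le(\delta_0+R)*g$ and $H\le(\delta_0+R)*h$ as measures. Substituting $R\le K+L_{v_0}$ and $g,h\le M+L_{v_0}$ reduces everything to the convolution estimates $K*M\le CK$, $K*L_{v_0}\le CK$, $L_{v_0}*M\le C(K+L_{v_0})$ and $L_{v_0}*L_{v_0}\le CL_{v_0}$, each of which holds because convolving the slowly varying kernel $K$ or the ray measure $L_{v_0}$ with the localized exponential $M$ (or with each other) preserves the $\abs{x}^{-1}$ tail up to constants. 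I expect the main obstacle to be the local central limit estimate of the third paragraph --- establishing the uniform $n^{-3/2}$ density bound for the excursion-endpoint walk, including the control of its characteristic function near and away from the origin needed to exclude lattice-type obstructions --- together with the bookkeeping that cleanly separates the singular ballistic first-leg term $L_{v_0}$ from the genuinely diffusive part $K$.
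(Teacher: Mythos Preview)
Your proposal is correct and follows essentially the same route as the paper: split the one-excursion occupation into the ballistic first leg (giving $L_{v_0}$) plus an exponentially localized remainder (giving $M$), bound the endpoint Green's function $R$ via the local CLT / standard random-walk Green's-function estimate, and then recover $G,H$ from the renewal convolution $G=g+g*R$, $H=h+h*R$. The paper is terser --- it cites the random-walk bound for $R$ as a ``standard random walk argument'' without spelling out the $n^{-3/2}$ local density bound or the centredness of the step distribution --- whereas you supply those details; conversely, the paper is slightly more explicit about the convolution bookkeeping for $g$ and $h$ (writing $g=g_1*g_2$ with $g_1$ the first leg and $g_2$ the shifted remainder), which makes the separation into $M+L_{v_0}$ cleaner than your verbal description of the $k\ge 2$ terms ``concentrating near the origin'' (they concentrate near $Y_1$, not near $0$, and it is the convolution that converts this into the stated bound).
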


This Lemma is slightly different from the Lorentz gas case as $L_{v_0}$ takes into account the discrete state-space of velocities. However the end result (Proposition \ref{prop:inter-excursion}) remains the same.

\begin{proof}

  To bound $g(A)$ let

  \begin{equation}\notag
    g_1(A) := \probab{Y_1 \in A} = C \int_0^{\infty} \one\{tv_0 \in A\} e^{-t}dt.
  \end{equation}
  We have fixed the initial velocity to be $u_1 = v_0$, therefore the points $\{Y_k - Y_1 \}_{k=1}^{\gamma_1}$ are independent of the initial step $Y_1$. Therefore write

  \begin{equation}
     g_2(A) := \expect{\abs{\{ 1\le k \le \gamma_{1} : Y_k - Y_1 \in A\}}}, \notag
  \end{equation}
  and note that

  \begin{equation} \label{g conv}
    g(A) = \int_{\R^3} g_2(A-x) g_1(dx).
  \end{equation}
  Similarly we can write

  \begin{align}
    h_1(A) & := \expect{\abs{\{t\le \tau_1 : Y(t) \in A\}}} = C \int_0^{\infty} \one\{tv_0\in A\} e^{-\max\{1,t\}} dt, \notag \\
    h_2(A) & := \expect{\abs{\{ \tau_1 \le t \le \Theta_1 : Y(t) -Y_1 \in A\}}} \notag\\
    \label{h conv}
    h(A)    &=    \int_{\R^3}h_2(A-x) g_1(dx) +h_1(A).
  \end{align}

  Now the bounds \eqref{g bound} follow by inserting the bounds:

  \begin{align}
    \begin{aligned}
      \label{g2 bounds}
    & g_2(\{x : \abs{x} >s \}) \le C e^{-cs},  &&& h_2\left(\{x:\abs{x}>s\}\right) \le Ce^{-cs}\\
    & g_2(\R^3) = \expect{\gamma_{1}}< \infty,  &&& h_2(\R^3) = \expect{\Theta_1 -\tau_1 } < \infty 
    \end{aligned}
  \end{align}
  into \eqref{g conv} and \eqref{h conv}. That is,

  \begin{align}
    g(A) \le \int_{A^c} g_2(\{y:\abs{y} > \abs{x}\}) dx + C \int_A g_1(dx) \le M(A) + L_{v_0}(A)
  \end{align}
  and likewise for $h(A)$.

  Now, to achieve \eqref{R bound} note that since $\gamma_1 > 1$

  \begin{align}
    \probab{\Xi_1 \in A} \le \expect{\abs{ \{2 \le k \le \gamma_1 : Y_k \in A \}}} \le g(A) 
  \end{align}
  Hence the density of distribution of $\Xi_1$ is bounded by the density of $g$. Moreover, because $\probab{\theta_1 > s } \le C e^{-cs}$ for some $C< \infty$ and $c>0$, we know that the density of distribution of $\Xi_1$ has exponentially decaying tails. Therefore $\Xi$ is a random walk, with i.i.d steps, and step distribution bounded by $g$ with exponentially decaying tails. Hence a standard random walk argument implies \eqref{R bound}.

  % The bound \eqref{R bound} follows from standard estimates on a random walk with i.i.d steps with exponentially decaying tails.
  % To compute \eqref{R bound} consider the characteristic function on $\R^d$

  % \begin{eqnarray}
  %   \varphi (p) &:=& \expect{e^{i\Xi_1 \cdot p}}\\
  %               &=& \sum_{k=1}^{\infty} \frac{i^k}{k!} \expect{\left( \sum_{i=1}^{\gamma} \xi_i u_i \cdot p \right)^k}\\
  %               &=& \expect{\frac{1}{1+\abs{\sum_{i=1}^{\gamma} u_i \cdot p }^2}}.
  % \end{eqnarray}
  % Thus, writing the density of the distribution of $\Xi_k$ as $f_k$ and using the fact that the steps of $\Xi$ are i.i.d,

  % \begin{equation}
  %   \sum_{k=1}^{\infty}f_k(x) = \int_{\R^d} e^{-2\pi i x \cdot p} \left(\sum_{k=1}^{\infty} \varphi(p)^k\right) dp.
  % \end{equation}
  % In the limit $\abs{x} \to \infty$ the only contribution to this integral is for small $\abs{p}< \frac{1}{\abs{x}}$, therefore we get for large $x$, the sum is bounded by $x^{2-d}$. 

  % For $\abs{x} \to 0$ the sum is dominated by the $k=1$ term. Since $\gamma \ge 2$ it is not hard to see that as $\abs{x} \to 0$, $\probab{\Xi_1\in B_{\abs{x}}(0)} \le C \abs{x}^2$ giving \eqref{R bound}.

  \eqref{G bound} then follows by writing (using the fact that the different excursions are i.i.d)

  \begin{align}\notag
    & G(A) =  g(A) + \int_{\R^3} g(A-x) R(dx),
    & H(A) =  h(A) +  \int_{\R^3}h(A-x) R(dx)
  \end{align}
  and inserting \eqref{R bound} and \eqref{g bound}.  
\end{proof}

\subsection{Inter-Excursion Mismatches}
\label{ss:Mismatches Naive}

Let $t \to Y^{\ast}(t)$ denote a Markovian flight process with associated virtual scatterers $Y^{\ast\prime} \in \cS^{Y^{\ast}}$ and initial velocity $u_1^{\ast} \in -\Omega_{v_0}$. Let $t \to Y(t)$ be a second Markovian flight process with associated virtual scatterers $\cS^{Y}$, and initial velocity $v_0$.

 We think of $Y^{\ast}$ as the process run backwards in time. Define the events

\begin{align*}
  \wh W_j &:=  \big\{ \{ Y(t) - Y^{\prime}_k :  && 0 < t < \Theta_{j-1}, && \Gamma_{j-1}< k \le \Gamma_j\} \cap \cQ_r \neq \emptyset \big\}, \\
  \wt W_j &:=  \big\{  \{ Y^{\prime}_k-Y(t):  && 0\le k< \Gamma_{j-1}, && \Theta_{j-1}< t < \Theta_j \} \cap \cQ_r \neq \emptyset \big\},\\
  \wh W_j^* &:=  \big\{ \{ Y^*(t) - Y^{\prime}_k : && 0 < t < \Theta_{j-1}, && 0 < k \le \gamma\} \cap \cQ_r \neq \emptyset \big\}, \\
  \wt W_j^* &:=  \big\{  \{ Y^{*\prime}_k-Y(t): && 0 < k \le \Gamma_{j-1},&& 0 < t < \theta \} \cap \cQ_r \neq \emptyset \big\}, \\
  \wh W_\infty^* &:=  \big\{ \{ Y^*(t) - Y^{\prime}_k : && 0 < t < \infty, && 0 < k \le \gamma\} \cap \cQ_r \neq \emptyset \big\}, \\
  \wt W_\infty^* &:=  \big\{  \{ Y^{*\prime}_k-Y(t):  && 0 < k < \infty, && 0 < t < \theta \} \cap \cQ_r \neq \emptyset \big\}.
\end{align*}
In words $\wh W_j$ is the event that during the $(j-1)^{th}$ excursion, a collision of $Y$ is (virtually) \emph{shadowed} by a previous excursion. And $\wt W_j$ is the event that during the $(j-1)^{th}$ excursion the process (virtually) \emph{recollides} with a scatterer from an earlier excursion.

It readily follows that 
\begin{align}
  \label{hat-tilde-Ws}
  \begin{aligned}
    & \probab{\wh W_j} = \probab{\wh W_j^*} \le  \probab{\wh W_{j+1}^*} \le \probab{\wh W_{\infty}^*}, 
    \\
    & \probab{\wt W_j} =\probab{\wt W_j^*}\le  \probab{\wt W_{j+1}^*} \le \probab{\wt W_{\infty}^*}. 
  \end{aligned}
\end{align}
By the union bound

\begin{align}
  \label{pwhtbound-for-Y}
  \begin{aligned}
    \probab{\wh W^*_\infty} & \le \sum_{z\in \Z^{3}} \probab{ \{1<k<\infty: Y^*_k\in B_{zr,2r}\} \neq \emptyset } \probab{ \{0< t\le \theta: Y(t)\in B_{zr,2r}\} \not=\emptyset}\\
   & \le \sum_{z\in \Z^{3}} (2r)^{-1} \expect{ \abs{ \{1<k<\infty: Y^*_k\in  B_{zr,2r}\} } } \cdot\expect{ \abs{ \{0< t\le \theta: Y(t)\in  B_{zr,3r}\} } }
   \\
   \probab{\wt W^*_\infty} & \le  \sum_{z\in \Z^3}  \probab{ \{0<t<\infty: Y^*(t)\in B_{zr,2r}\} \neq \emptyset}\probab{ \{1\le  j \le \gamma : Y_j\in B_{zr,2r}\} \neq \emptyset}\\
   & \le \sum_{z\in \Z^{3}} (2r)^{-1} \expect{ \abs{ \{0<t<\infty: Y^*(t)\in B_{zr,3r}\} } }  \cdot \expect{\abs{\{1\le j\le \gamma :  Y_j\in B_{zr,2r}\}}}
\end{aligned}
\end{align}

\subsection{Computations}
\label{ss:Computations Naive}

\eqref{pwhtbound-for-Y} implies that

\begin{align}
  \begin{aligned}
    \label{W by Hg}
    & \probab{\wt{W}^{\ast}_{\infty}} \le (2r)^{-1} \sum_{z \in \Z^3 }H^{\ast}(B_{zr,3r})g(B_{zr,2r}) \\
    & \probab{\wh{W}^{\ast}_{\infty}} \le (2r)^{-1}\sum_{z \in \Z^3 }G^{\ast}(B_{zr,3r})h(B_{zr,2r}).
  \end{aligned}
\end{align}
where $G^{\ast}$ is defined like $G$, except that in this instance the initial velocity is chosen from $-\Omega_{v_0}$ rather than fixed to be $v_0$.

%%%%%%%%%%%%%%%%%%%%%%%%%%%%%%%%%%%%%%%%%%%%%%%%%%%%%%%%%%%%%%%%%%%%%%%%%%%%%%%%
%--------------------LEMMA compbounds-------------------------------------------
%%%%%%%%%%%%%%%%%%%%%%%%%%%%%%%%%%%%%%%%%%%%%%%%%%%%%%%%%%%%%%%%%%%%%%%%%%%%%%%%

\begin{lemma} \label{lem:compbounds}
  The following bounds hold for some $C<\infty$ and any $v \in \Omega$

  \begin{align*}
    & \sum_{z \in \Z^3} K(B_{zr,3r}) M(B_{zr,2r}) \le Cr^3 ,
    & \sum_{z \in \Z^3} L_{v}(B_{zr,3r}) M(B_{zr,2r}) \le Cr^3 \\
    & \sum_{z \in \Z^3} K(B_{zr,3r}) L_v(B_{zr,2r}) \le Cr^3 ,
    & \sum_{z \in \Z^3} L_{v}(B_{zr,3r}) L_{w}(B_{zr,2r}) \le Cr^2.
  \end{align*}
  for $v \neq w \in \Omega$
\end{lemma}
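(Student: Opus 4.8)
We need to bound sums over the lattice $\Z^3$ of products of the measures $K$, $L_v$, and $M$ evaluated on small balls $B_{zr,3r}$ and $B_{zr,2r}$ of radius $\sim r$ centered at lattice points $zr$. The measures have densities: $K(dx) = C\min\{1,|x|^{-1}\}dx$, $M(dx) = Ce^{-c|x|}dx$, and $L_v$ is the (one-dimensional) occupation measure of the ray $\{tv : t>0\}$ weighted by $e^{-ct}$. Let me think about the scaling heuristics before committing to an approach.

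\textbf{Common strategy.} All four bounds share one mechanism: each factor $\mu(B_{zr,\rho r})$ is the mass of a ball of radius $\sim r$ centered at the lattice point $zr$, which I would reduce to (density)$\,\times\,$(size) and then compare the resulting sum over $z\in\Z^3$ to an integral. The only genuine differences are dimensional: $K$ and $M$ are absolutely continuous with bounded densities in $\R^3$, whereas $L_v$ lives on the one-dimensional ray $\{tv:t>0\}$. As preliminary pointwise estimates, for $r$ small I would record
\[
  M(B_{zr,2r}) \le C r^{3} e^{-c\abs{zr}}, \qquad K(B_{zr,3r}) \le C r^{3}\min\{1,\abs{zr}^{-1}\}
\]
(away from the origin; near the origin $K(B_{zr,3r})\le Cr^3$ since $\min\{1,\cdot\}\le 1$), obtained by bounding the integrand over the ball by its value at the center up to a factor $e^{O(r)}=O(1)$. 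For the ray measure I would use that a ball $B_{zr,\rho r}$ meets the ray in a segment of length $\le 2\rho r$ on which the weight is $\le e^{-c(\abs{zr}-\rho r)}$, so that
\[
  L_v(B_{zr,\rho r}) \le C r\, e^{-c\abs{zr}}\,\one\{\operatorname{dist}(zr,\{tv:t>0\})\le \rho r\}.
\]

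\textbf{The bounds with at most one $L_v$.} For the first bound I insert the two three-dimensional estimates and factor out the powers of $r$, obtaining $\sum_z K(B_{zr,3r})M(B_{zr,2r}) \le C r^{6}\sum_z \min\{1,\abs{zr}^{-1}\}e^{-c\abs{zr}}$. The right-hand sum runs over the lattice $r\Z^3$ of spacing $r$ of a bounded, integrable function, so comparing it to $r^{-3}\int_{\R^3}\min\{1,\abs{x}^{-1}\}e^{-c\abs{x}}\,dx<\infty$ gives $\sum_z(\cdots)\le Cr^{-3}$, hence the claimed $Cr^3$. For the two mixed bounds carrying a single $L_v$, the new ingredient is a tube count: a tube of radius $O(r)$ and arclength $\Delta$ around the ray has volume $O(r^2\Delta)$ and hence contains $O(\Delta/r)$ points of $r\Z^3$. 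Parametrizing the contributing points by $t\approx\abs{zr}$, this turns the lattice sum into $\tfrac1r\int_0^\infty(\cdots)\,dt$; concretely $\sum_z L_v(B_{zr,3r})M(B_{zr,2r}) \le C r^{4}\cdot\tfrac1r\int_0^\infty e^{-2ct}\,dt = Cr^3$, and identically for $\sum_z K(B_{zr,3r})L_v(B_{zr,2r})$, the extra factor $\min\{1,t^{-1}\}$ still integrating against $e^{-ct}$ to a finite constant.

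\textbf{The last bound and the main obstacle.} The delicate point is $\sum_z L_v(B_{zr,3r})L_w(B_{zr,2r})\le Cr^2$ for $v\neq w$, where the power improves from $r^3$ to $r^2$. Both rays $\{tv\}$ and $\{tw\}$ emanate from the origin, and a lattice point $zr$ can contribute only if it lies within $O(r)$ of \emph{both} rays simultaneously. Since $\Omega$ is a fixed finite set, the angle between distinct $v,w\in\Omega$ is bounded below by some $\theta_0>0$, so the rays separate linearly, $\operatorname{dist}(tv,\{sw:s>0\})\ge t\sin\theta_0$, and their $O(r)$-tubes overlap only inside a ball of radius $O(r/\sin\theta_0)=O(r)$ about the origin. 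Thus only $O(1)$ lattice points contribute, each with $L_v(B_{zr,3r})\le Cr$ and $L_w(B_{zr,2r})\le Cr$ (segment length $O(r)$, weight $\le 1$), yielding $O(1)\cdot Cr^2=Cr^2$. This geometric separation of rays is the genuinely model-specific input, and it is exactly where the discreteness of $\Omega$ (equivalently $v\neq w$) is used; it replaces the more delicate small-angle analysis required for the Lorentz gas and is ultimately responsible for the removal of the logarithmic factor noted in the remark after Theorem \ref{thm:main theorem}.
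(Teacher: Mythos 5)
Your proposal is correct and follows essentially the same route as the paper: the same pointwise ball estimates for $K$, $M$, $L_v$, the same Riemann-sum comparison for the absolutely continuous factors, your "tube count" is exactly the paper's approximation of the lattice points near the ray by the one-dimensional lattice $\{rvz : z\in\Z\}$, and your angular-separation argument for $v\neq w$ is the paper's observation that only finitely many $z$ contribute to the $L_vL_w$ sum. No substantive difference.
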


\begin{proof}
  The following bounds follow immediately from the definitions of $K,M,$ and $L_v$

  \begin{align}
    \begin{aligned} \label{straightforward}
    & K(B_{zr,3r}) \le Cr^3 ,\\
    & M(B_{zr,3r}) \le Cr^3 e^{-cr\abs{z}},\\
    & L_v(B_{zr,3r}) \le Cr^3\delta_{0,z}  + C r \one \{ \exists t > 0  : vt \cap B_{zr,3r}\}(1-\delta_{0,z}) e^{-cr\abs{z}}.\\
    \end{aligned}
  \end{align}
  
  From here
  \begin{align*}
    \sum_{z \in \Z^3} K(B_{zr,3r}) M(B_{zr,2r}) &\le Cr^6\sum_{z \in \Z^3}e^{-cr\abs{z}}\\
    &\le Cr^3\int_{\R^3}e^{-c\abs{z}}dz \le Cr^3 
  \end{align*}
  where we use a Riemann integral to go from the first line to the second. Likewise

  \begin{align}
    \begin{aligned} \label{KL bound}
    \sum_{z \in \Z^3}K(B_{zr,3r})L_v(B_{zr,2r}) & \le Cr^{6} + Cr^4\sum_{z \in (\Z^3)^{\ast}} \one\{\exists t >0 : vt \cap B_{zr,3r}\}e^{-cr\abs{z}}\\
                                                & \le Cr^6 + C' r^4 \sum_{z=1}^\infty e^{-cr \abs{vz}}\\
                                                & \le Cr^6 + Cr^3 \int_0^{\infty} e^{-c\abs{vt}}dt \le Cr^3
    \end{aligned}
  \end{align}
  where from the first line to the second we approximate the points $zr \in r\Z^3$ close to the line $vt$ by the points $rvz$ for $z\in \Z$.

  Similarly

  \begin{equation*}
    \sum_{z \in \Z^3} L_{v}(B_{zr,3r}) M(B_{zr,2r}) \le  Cr^6 + Cr^4 \sum_{z \in (\Z^3)^*}   \one \{ \exists t > 0  : vt \cap B_{zr,3r}\} e^{-2cr\abs{z}}
  \end{equation*}
  the bound then follows as it did in \eqref{KL bound}.

  Finally,

  \begin{align*}
    \sum_{z \in \Z^3}L_v(B_{zr,3r})L_w(B_{zr,2r}) & \le Cr^2\sum_{z \in (\Z^3)^{\ast}} \one\{\exists t >0 : vt \cap B_{zr,3r}\}\one\{\exists t >0 : wt \cap B_{zr,3r}\}e^{-2cr\abs{z}}\\
      &\le Cr^2e^{-cr} \le Cr^2,\\
  \end{align*}
  since $v \neq w$ only finitely many $z \in \Z$ contribute to the sum, from which the second line follows.

\end{proof}

Note that Lemma \ref{lem:greenbounds} is stated for $G$ and $H$ and not $G^{\ast}$ and $H^{\ast}$. However similar bounds hold for the backwards excursions. Thus (omitting these details), we use Lemma \ref{lem:greenbounds} to insert Lemma \ref{lem:compbounds} into \eqref{W by Hg} to get:

%%%%%%%%%%%%%%%%%%%%%%%%%%%%%%%%%%%%%%%%%%%%%%%%%%%%%%%%%%%%%%%%%%%%%%%%%%%%%%%%
%------------------------Prop:inter-leg-----------------------------------------
%%%%%%%%%%%%%%%%%%%%%%%%%%%%%%%%%%%%%%%%%%%%%%%%%%%%%%%%%%%%%%%%%%%%%%%%%%%%%%%%

\begin{proposition} \label{prop:inter-excursion}
  There exists a constant $C>0$ such that for all $j\ge 1$

  \begin{equation}
    \probab{\wh W_j} \le Cr \qquad , \qquad \probab{\wt W_j} \le Cr.
  \end{equation}

\end{proposition}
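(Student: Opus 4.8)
The plan is to start from the two lattice-sum bounds \eqref{W by Hg}, substitute the occupation-measure estimates of Lemma \ref{lem:greenbounds}, and then collapse the resulting sums using Lemma \ref{lem:compbounds}. By \eqref{hat-tilde-Ws} it suffices to bound $\probab{\wh W_\infty^*}$ and $\probab{\wt W_\infty^*}$, and since the hatted and tilded estimates are structurally identical I would treat $\probab{\wt W_\infty^*}$ in detail; the other follows verbatim with the roles of $(H^*,g)$ and $(G^*,h)$ interchanged.

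First I would insert Lemma \ref{lem:greenbounds} into the right-hand side of the first line of \eqref{W by Hg}. Writing $v^* \in -\Omega_{v_0}$ for the initial velocity of the backward process, the lemma gives $H^*(B_{zr,3r}) \le K(B_{zr,3r}) + L_{v^*}(B_{zr,3r})$ and $g(B_{zr,2r}) \le M(B_{zr,2r}) + L_{v_0}(B_{zr,2r})$. Multiplying these two bounds out produces four lattice sums, of the shapes $\sum_z K\cdot M$, $\sum_z K\cdot L_{v_0}$, $\sum_z L_{v^*}\cdot M$ and $\sum_z L_{v^*}\cdot L_{v_0}$, each of which is controlled by exactly one of the four inequalities in Lemma \ref{lem:compbounds}.

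The first three sums are $O(r^3)$, so after the prefactor $(2r)^{-1}$ they contribute only $O(r^2)$ and are harmless. The decisive term is the fourth, $\sum_z L_{v^*}(B_{zr,3r}) L_{v_0}(B_{zr,2r})$, and here the crucial observation is that $v^* \neq v_0$: since $v^* \in -\Omega_{v_0} = \{-\vartheta_i(v_0): i=1,2,3\}$ while $\vartheta_i$ flips a single coordinate and every component of $v_0$ is nonzero (as all $p_i>0$), one never has $v_0 = -\vartheta_i(v_0)$. This is precisely the parity obstruction. Consequently the two rays $\{t v_0\}$ and $\{t v^*\}$ meet only near the origin, only finitely many lattice points contribute, and the fourth bound of Lemma \ref{lem:compbounds} yields $\sum_z L_{v^*} L_{v_0} \le Cr^2$. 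Averaging over the finitely many admissible $v^* \in -\Omega_{v_0}$ preserves this, since each summand is $O(r^2)$.

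Collecting the four contributions gives $\probab{\wt W_\infty^*} \le (2r)^{-1}(Cr^3 + Cr^3 + Cr^3 + Cr^2) \le Cr$, and the identical computation yields $\probab{\wh W_\infty^*} \le Cr$. Finally \eqref{hat-tilde-Ws} gives $\probab{\wh W_j} \le \probab{\wh W_\infty^*} \le Cr$ and $\probab{\wt W_j} \le \probab{\wt W_\infty^*} \le Cr$ for every $j\ge1$, which is the claim. The one genuinely delicate step is the $L\cdot L$ term: if the two velocities could coincide, the two rays would overlap, $\sim r^{-1}$ lattice points would contribute, the sum would only be $O(r)$, and after dividing by $2r$ we would be left with a useless $O(1)$ bound. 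It is exactly the parity fact $v_0 \notin -\Omega_{v_0}$ that separates the rays and rescues the order-$r$ estimate.
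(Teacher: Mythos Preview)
Your proposal is correct and follows exactly the approach the paper uses: reduce via \eqref{hat-tilde-Ws} to the infinite-time starred events, invoke the lattice-sum bounds \eqref{W by Hg}, plug in the occupation-measure estimates of Lemma \ref{lem:greenbounds} (extended to the backward process, as the paper also notes), expand into the four cross terms, and close with Lemma \ref{lem:compbounds}. Your identification of the $L_{v^*}\!\cdot L_{v_0}$ term as the bottleneck, and the parity observation $v_0\notin -\Omega_{v_0}$ that guarantees $v^*\neq v_0$, is precisely the point the paper is relying on when it applies the last inequality of Lemma \ref{lem:compbounds}; the paper leaves this implicit, so your write-up is in fact more explicit than the original.
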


\subsection{Mismatches  within one Excursion}
\label{ss:Mismatches}

Define the following indicator functions

\begin{align}
  \begin{aligned}\label{eta}
    &\wh\eta_j = \wh{\eta}(y_{j-2},y_{j-1},y_j) := \one \left\{ \min_{ 0 \le t \le \xi_{j-2}} \left(tu_{j-2}+y_{j-1}+\beta_{j-1}\right) \in \cQ_r \right\}\\
    &\wt{\eta}_j = \wt{\eta}(y_{j-2},y_{j-1},y_j) := \one \left\{ \min_{ 0 \le t \le \xi_{j}} \left(y_{j-1}+t u_{j}-\beta_{j-2}\right)    \in  \cQ_r \right\}\\
    & \eta_j := \max\{ \wt\eta_j , \wh\eta_j\}
  \end{aligned}
\end{align}
In words, $\wh\eta_j$ is the event that the $(j-1)$-labelled collision is shadowed by the immediately preceding path (i.e a \emph{direct} shadowing event). And $\wt\eta_j$ is the event that during the $j^{th}$ path segment there is a recollision with the immediately preceding obstacle (i.e a \emph{direct} recollision) - see the left hand side of Figure \ref{fig:direct}.

%%%%%%%%%%%%%%%%%%%%%%%%%%%%%%%%%%%%%%%%%%%%%%%%%%%%%%%%%%%%%%%%%%%%%%%%%%%%%%%%
% ------------------------Lemma-------------------------------------------------
%%%%%%%%%%%%%%%%%%%%%%%%%%%%%%%%%%%%%%%%%%%%%%%%%%%%%%%%%%%%%%%%%%%%%%%%%%%%%%%%

\begin{lemma} \label{lem:direct}
  For any $i,j < \gamma$ with $i \neq j$ there exists a $C< \infty$ such that
  \begin{align}
    \expect{\eta_j} & \le C r   \label{directbnd}\\
    \expect{\eta_j \eta_i } & \le Cr^2 \label{directbnd 2}
  \end{align}
\end{lemma}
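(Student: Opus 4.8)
The plan is to derive both estimates from a single deterministic geometric fact: a \emph{direct} mismatch at label $j$ can occur only if the intermediate flight time $\xi_{j-1}$ is of order $r$. Since $\eta_j=\max\{\wh\eta_j,\wt\eta_j\}$, it suffices to show that each of $\wh\eta_j$ and $\wt\eta_j$ forces $\xi_{j-1}$ to be small, and by translation invariance I work in coordinates centred on the relevant cube. Concretely, I would prove the following claim: there is a constant $C_0=C_0(\mathbf p)<\infty$, depending only on $\min_i p_i/\abs{\mathbf{p}}>0$, such that
\[
  \eta_j \le \one\{\xi_{j-1}\le C_0 r\}
\]
pointwise (i.e. for every realization of the velocities and flight data).

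The proof of this claim is the main obstacle; everything else is soft. For the recollision term $\wt\eta_j$, recall that the virtual scatterer was placed at the $(j-2)$-collision, where exactly one velocity component — say the $k$-th — is reversed, so the particle immediately leaves $\cQ_r$ in the $k$-direction. Because motion is rectilinear between collisions and each $\vartheta_i$ flips a single coordinate, the $k$-coordinate can return to the range of the cube only if the intervening $(j-1)$-collision flips the same component back, i.e. $u_j=u_{j-2}$; otherwise the $k$-coordinate drifts away monotonically and $\wt\eta_j=0$. In the surviving case the two \emph{unflipped} coordinates move monotonically throughout the $(j-1)$-th and $j$-th segments (their velocity components are unchanged, with magnitude $p_i/\abs{\mathbf{p}}\ge \min_i p_i/\abs{\mathbf{p}}>0$); starting on the cube face, such a coordinate can still lie in the cube's range at a return time $t^\ast$ only if $(\xi_{j-1}+t^\ast)\,p_i/\abs{\mathbf{p}}\le C r$, using that the cube has diameter $O(r)$ and $\abs{\beta}\le\sqrt3\,r$. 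This forces $\xi_{j-1}\le C_0 r$. The shadowing term $\wh\eta_j$ is the time-reversal of this picture: the $(j-2)$-segment, of direction $u_{j-2}$, must pass within $O(r)$ of the scatterer placed at the $(j-1)$-collision, which is displaced from the end of that segment by $\xi_{j-1}u_{j-1}+O(r)$. Since $u_{j-1}$ is never parallel to $u_{j-2}$ (they differ in exactly one nonzero coordinate), the transverse component of this displacement is at least $c\,\xi_{j-1}$ for a fixed $c>0$, and a hit again requires $c\,\xi_{j-1}\le Cr$, i.e. $\xi_{j-1}\le C_0 r$.

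Granting the claim, both bounds are immediate. For \eqref{directbnd},
\[
  \expect{\eta_j}\le \probab{\xi_{j-1}\le C_0 r}=1-e^{-C_0 r}\le C_0 r .
\]
For \eqref{directbnd 2} with $i\ne j$ we have $\eta_i\eta_j\le \one\{\xi_{i-1}\le C_0 r\}\,\one\{\xi_{j-1}\le C_0 r\}$; since $i\ne j$ the indices $i-1$ and $j-1$ are distinct, so these two indicators involve distinct members of the i.i.d.\ family $\{\xi_n\}$ and are independent (the flight times are independent of the Markovian velocity chain, so no conditioning is needed). Hence
\[
  \expect{\eta_i\eta_j}\le \probab{\xi_{i-1}\le C_0 r}\,\probab{\xi_{j-1}\le C_0 r}\le (C_0 r)^2\le C r^2 .
\]
The only genuinely delicate point is the case analysis and transverse-drift estimate establishing the claim uniformly over the finitely many velocity configurations; the restriction $i,j<\gamma$ plays no role in the argument.
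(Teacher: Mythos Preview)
Your proof is correct and follows essentially the same approach as the paper: both arguments establish the deterministic implication $\eta_j=1 \Rightarrow \xi_{j-1}\le C_0 r$ by exploiting that at least one velocity coordinate is unchanged through two consecutive collisions (so motion in that coordinate is monotone), and then conclude \eqref{directbnd} and \eqref{directbnd 2} from the independence of the exponential flight times $\xi_{i-1},\xi_{j-1}$. Your case analysis for $\wt\eta_j$ and the transverse-component argument for $\wh\eta_j$ are more explicit than the paper's terse treatment, but the underlying idea is identical.
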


\noindent \eqref{directbnd 2} is not needed to prove the result for $T=o(r^{-1})$ however will be used to prove Theorem \ref{thm:main-technical}.

\begin{proof}[Proof of Lemma \ref{lem:direct}]
  % First note that, if we run the process $t \mapsto Y(t)$ backwards in time, then recollisions become shadowing events and vice versa. As the process run backwards is again a Markovian flight process this implies that $\wh{\eta}_j$ and $\wt{\eta}_j$ occur with the same probability. Hence we need only prove \eqref{directbnd} for recollisions.
  Suppose $u_{j-2} = U$. Then throughout the two subsequent collisions we know for some $i=1,2,3$ - $(u_{j-1})_i = (u_j)_i = U_i$ (i.e one coordinate of the velocity remains unchanged). Thus to (directly) recollide  with $Y_{j-2}^{\prime}+\cQ_r$ we require $\xi_{j-1}<Cr$ which implies \eqref{directbnd}. The same is true for shadowing events, that is $\wh{\eta}_j =1$ implies $\xi_{j-1} > Cr$ for some constant.

  \eqref{directbnd 2} follows for the same reason. Suppose $i \neq j$, then for $\eta_j \eta_i = 1$, requires $\max \{\xi_{j-1}, \xi_{i-1}\} < Cr$ for some constant. As these are independent exponentials \eqref{directbnd 2} is immediate.
  
\end{proof}

Lemma \ref{lem:direct} controls the probability of a \emph{direct} mismatch. However we also need to control indirect mismatches. To that end define
\begin{align}
  \begin{aligned} \label{indirect def}
    &\wh\eta_j^{o}   := \one \left\{ \min_{ 0 \le t \le \tau_{j-3}}\left(  Y(t) - Y_{j-1}^{\prime}\right) \in  \cQ_r \right\}\\
    &\wt{\eta}^{o}_j := \one \left\{ \min_{ \tau_{j-1} \le t \le \tau_{j}}\left(\min_{ 0 \le k \le j-3} \left( Y(t) - Y_{k}^{\prime} \right)\right) \in \cQ_r \right\}\\
    & \eta_j^{o} := \max\{ \wt\eta_j^o , \wh\eta_j^o\}
  \end{aligned}
\end{align}
In words $\wh \eta_j^{o}$ is the indicator that an \emph{indirect} (virtual) shadowing event occurs and $\wt \eta_j^{o}$ is the event an \emph{indirect} (virtual) recollision occurs. That is a mismatch which involves more than the immediately preceding obstacle or path. 

%%%%%%%%%%%%%%%%%%%%%%%%%%%%%%%%%%%%%%%%%%%%%%%%%%%%%%%%%%%%%%%%%%%%%%%%%%%%%%%%
%-----------------PROP: direct-indirect-----------------------------------------
%%%%%%%%%%%%%%%%%%%%%%%%%%%%%%%%%%%%%%%%%%%%%%%%%%%%%%%%%%%%%%%%%%%%%%%%%%%%%%%%

\begin{lemma}\label{lem:indirect}
  For any $3<j\le\gamma$ there exists a constant $C>0$ such that

  \begin{align}
    \expect{\eta_j^{o}} & \le C \gamma^2 r^2 \label{indirectbnd}
  \end{align}
\end{lemma}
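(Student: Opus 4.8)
The plan is to reduce each of the two indirect events to a sum of \emph{pairwise} encounters and then to bound a single such encounter by an elementary geometric estimate. First I would write $\eta_j^o \le \wh\eta_j^o + \wt\eta_j^o$ and treat the two terms dually. For the indirect recollision $\wt\eta_j^o$ in \eqref{indirect def}, only a single flight segment of the path is involved, namely $t\in[\tau_{j-1},\tau_j]$, tested against the whole collection of earlier virtual scatterers $\{Y_k^{\prime}:0\le k\le j-3\}$; a union bound gives $\wt\eta_j^o \le \sum_{k=0}^{j-3}\one\{\exists\, t\in[\tau_{j-1},\tau_j]:Y(t)-Y_k^{\prime}\in\cQ_r\}$, a sum of fewer than $\gamma$ indicators. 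Symmetrically, for the indirect shadowing $\wh\eta_j^o$ a single virtual scatterer $Y_{j-1}^{\prime}$ is tested against the path over $[0,\tau_{j-3}]$, which I split into its fewer than $\gamma$ constituent flight segments. In both cases I have reduced matters to the probability that one $\cO(1)$-length flight segment passes within $\cQ_r$ of one virtual scatterer, the two being separated by at least two intervening collisions.

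For the pairwise bound, fix such a pair, say segment $j$ (direction $u_j$, length $\xi_j$) together with the scatterer $Y_k^{\prime}$, and let $\Delta:=Y_{j-1}-Y_k^{\prime}$ be the relative displacement, a sum of at least two flight vectors. For the shifted segment to meet $\cQ_r$, the component of $\Delta$ transverse to $u_j$ must lie in a set of diameter $\cO(r)$, whereas the longitudinal component need only fall into an $\cO(1)$ window, which is an unconstrained event. Hence the encounter probability is dominated by $\probab{\Delta_{\perp}\in(\text{a set of area }\cO(r^2))}$, and this is $\cO(r^2)$ provided $\Delta_\perp$ has a bounded two-dimensional density. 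This is exactly where I would invoke the wind-tree geometry: a single collision flips precisely one coordinate, so $u_i$ and $u_{i+1}$ are never collinear, and the displacement $\Delta$, assembled from at least two such steps, genuinely spreads in the two directions transverse to $u_j$. This non-degeneracy rules out the aligned configuration whose contribution is the singular line-measure $L_{v_0}$ appearing in Lemmas \ref{lem:greenbounds}--\ref{lem:compbounds}, and which in the Lorentz-gas analogue was responsible for the extra $\abs{\log r}$ factor.

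To assemble the estimate I would sum the pairwise bounds over the fewer than $\gamma$ relevant indices and control the occupation of the earlier scatterers (respectively of the earlier path segments) by the Green's-type measures $K,M,L_{v_0}$ of Lemma \ref{lem:greenbounds}, together with the summation bounds of Lemma \ref{lem:compbounds}. The one bookkeeping point is that the present statement, like \eqref{directbnd}, is made for a fixed $j\le\gamma$, i.e.\ conditionally on the excursion reaching step $j$; so I retain the dependence on the realized excursion length $\gamma$ rather than averaging it out as in Lemma \ref{lem:greenbounds}. The union bound over the $<\gamma$ encountering indices supplies one factor of $\gamma$, and the $\gamma$-dependence of the conditioned occupation measure supplies a second, yielding the stated $C\gamma^2 r^2$. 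Since $\gamma$ has exponentially decaying tails, $\expect{\gamma^2}<\infty$, so this polynomial loss is harmless wherever the lemma is later applied.

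The hard part is the uniform pairwise $\cO(r^2)$ bound, that is, establishing the nondegeneracy of $\Delta_\perp$ uniformly over all velocity sequences and index gaps that can occur within an excursion, and checking that the borderline geometries (the segment grazing a face of the cube, or $\Delta$ nearly parallel to $u_j$) do not spoil the $r^2$ order. Once this is in hand the remainder is a routine union bound combined with the occupation estimates already proved.
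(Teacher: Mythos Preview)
Your decomposition (union bound over the earlier index, reduction to a single segment--scatterer encounter, duality between $\wh\eta_j^o$ and $\wt\eta_j^o$) matches the paper's exactly, and you correctly isolate the uniform pairwise $\cO(r^2)$ bound as the crux. But your proposed justification for that bound has a real gap.

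You claim $\Delta_\perp$ has bounded two-dimensional density because ``$\Delta$, assembled from at least two such steps, genuinely spreads in the two directions transverse to $u_j$''. This is false. Take $k=j-3$ and the (positive-probability) event $u_{j-2}=u_j$. Then $\Delta=\xi_{j-2}u_j+\xi_{j-1}u_{j-1}-\beta_k$, whose component transverse to $u_j$ is $\xi_{j-1}(u_{j-1})_\perp+\cO(r)$: this lives on a \emph{single line}, not a plane. Two non-collinear steps need not span the $2$-plane transverse to $u_j$; they do so only when one of them is linearly independent of \emph{both} $u_j$ and $u_{j-1}$. Invoking the Green's-function bounds of Lemmas~\ref{lem:greenbounds}--\ref{lem:compbounds} does not save this either: the singular line contribution $L_{v_0}$ is exactly what such aligned configurations produce, and in Lemma~\ref{lem:compbounds} the $L\!\times\!L$ term gives only $\cO(r^2)$ in total, i.e.\ $\cO(r)$ per pair after dividing by $r$.

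The paper's resolution is the step you are missing. First one observes that if every intermediate velocity lies in $\{u_j,u_{j-1}\}$, two coordinates of $Y(t)-Y_k^\prime$ are monotone in $t$ and an indirect return cannot occur; hence on $\mathscr A_k$ there exists some $i\in[k{+}1,j{-}2]$ with $u_i\notin\{u_j,u_{j-1}\}$. In the wind-tree geometry any such $u_i$ is automatically linearly independent of $u_j,u_{j-1}$. Conditioning on the remaining displacement $s_i$ and applying the elementary Lemma~\ref{lem:lin ind} (three linearly independent exponential displacements hitting a cube of side $r$) gives the $\cO(r^2)$. A \emph{second} union bound, over the index $i$ of this third velocity, is what produces the second factor of $\gamma$ --- not a ``$\gamma$-dependence of the conditioned occupation measure'' as you suggest.
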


\begin{proof}[Proof of Lemma \ref{lem:indirect}]
  Under time reversal Markovian flight processes remain Markovian flight process while recollisions become shadowed events. Hence recollisions and shadowing events happen with the same probability and thus we may restrict to proving the statement for recollisions.

  By the union bound
  \begin{equation} \label{union}
    \expect{\wt{\eta}_j^o} \le \sum_{k\le j-3} \probab{\left\{\min_{ \tau_{j-1} \le t \le \tau_j}  \left(Y(t) - Y_k^{\prime}\right) \in\cQ_r\right\} }.
  \end{equation}
  Write $\mathscr{A}_k = \{\min_{ \tau_{j-1} \le t \le \tau_j} \left( Y(t) - Y_k^{\prime} \right) \in\cQ_r\}$ - the event there is a indirect recollision after $k-1$ fresh collisions. To have an indirect recollision, requires at least three distinct velocities along the path, thus

  \begin{equation*}
    \probab{ \mathscr{A}_k } = \probab{\mathscr{A}_k \cap \{\exists i \in [k+1 , j-2] : u_i \neq u_j,u_{j-1}\}} \label{ui new}.
  \end{equation*}

  Moreover at each collision exactly one of the velocity coordinates changes sign. Hence we know $u_j$ and $u_{j-1}$ differ by a sign change in one coordinate therefore the event in the right hand side of \eqref{ui new} implies there is a third velocity which is linearly independent of $u_j$ and $u_{j-1}$. Therefore

  \begin{equation*}
    \mbox{\eqref{ui new}} = \probab{\mathscr{A}_k \cap \{\exists i \in [k+1, j-2]\; : \; u_i, u_j,u_{j-1} \mbox{ lin. ind.}\}}
  \end{equation*}
  Moreover note that if we fix $i$

  \begin{equation*}
    \cA_k = \{ \min_{ 0 \le t \le \xi_j} \left( \xi_i u_i + \xi_{j-1} u_{j-1} + t u_j - s_i \right) \in \cQ_r \}
  \end{equation*}
  where

  \begin{equation*}
    s_i = \sum_{\substack{l = k+1 \\ l \neq i}}^{j-2} u_l \xi_l.
  \end{equation*}
  Let $B_i$ denote the event $u_i, u_{j-1}, u_j$ are linearly independent. In this case

  \begin{align*}
    \probab{\cA_k} &\le \sum_{i=k+1}^{j-2} \probab{B_i \cap \cA_k  }\\
                   &\le \sum_{i=k+1}^{j-2} \expect{\condprobab{B_i \cap \{\min_{0 \le t \le \xi_j} \left(\xi_i u_i + \xi_{j-1}u_{j-1} + tu_j - s_i\right) \in \cQ_r\} }{s_i}}.
  \end{align*}
  Lemma \ref{lem:lin ind} (below) implies that the probability inside the expectation is bounded by $Cr^2$. As $j-2- k \le \gamma$ this implies

  \begin{align*}
    \probab{\cA_k} \le C\gamma r^2.
  \end{align*}
  Inserting this into \eqref{union} then implies \eqref{indirectbnd}

\end{proof}

\begin{lemma} \label{lem:lin ind}
  Suppose $U_1,U_2,U_3\in \Omega$ are linearly independent and $\xi_1,\xi_2,\xi_3 \sim EXP(1)$ are i.i.d exponentials. Then there exists a constant $C<\infty$ such that for any $s \in \R^3$

  \begin{equation} \label{lin ind}
    \probab{\min_{ 0 \le  t \le \xi_3} \left(U_1\xi_1 + U_2\xi_2 +U_3 t-s\right) \in  \cQ_r} \le Cr^2.
  \end{equation}
\end{lemma}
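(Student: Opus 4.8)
The plan is to exploit linear independence to reduce the probability to a product of three one-dimensional estimates. Since $U_1, U_2, U_3 \in \Omega$ are linearly independent, they form a basis of $\R^3$, so the linear map $L(a_1,a_2,a_3) := a_1 U_1 + a_2 U_2 + a_3 U_3$ is invertible with a bounded inverse (the relevant constant depends only on the finite set $\Omega$, so it is uniform). The event in \eqref{lin ind} asks whether the segment $\{U_1\xi_1 + U_2\xi_2 + U_3 t - s : 0 \le t \le \xi_3\}$ meets the cube $\cQ_r$, which has diameter $\cO(r)$. Pulling back through $L^{-1}$, the cube $\cQ_r$ becomes a parallelepiped of diameter $\cO(r)$ in the $(a_1,a_2,a_3)$ coordinates, and the event becomes a requirement that $(\xi_1,\xi_2, \xi_1\text{-free combination})$ lands in an $\cO(r)$-neighborhood in each of the three coordinate directions.

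First I would change variables so that hitting $\cQ_r$ forces each of $\xi_1$ and $\xi_2$ to lie in an interval of length $\cO(r)$ determined by $s$ and the (independent) value of $t$. Concretely, writing $s = L(s_1,s_2,s_3)$ and noting the target point expressed in the $U$-basis must agree with $s$ up to the $\cO(r)$ pullback of the cube, the coordinates $\xi_1$ and $\xi_2$ are each pinned to an $\cO(r)$-window. Because $\xi_1,\xi_2$ are independent $EXP(1)$ variables, the probability that an exponential lands in a prescribed interval of length $\cO(r)$ is at most $\cO(r)$, and by independence the joint probability is $\cO(r^2)$. The third variable $t$ ranges over $[0,\xi_3]$ and only needs to make the third coordinate feasible, which costs no extra smallness — it is the freedom in $t$ (rather than a third pinned exponential) that prevents this from being an $r^3$ bound, consistent with the statement. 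I would handle the constraint on the third coordinate by conditioning on $\xi_3$ and integrating out, absorbing everything into the uniform constant $C$.

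The main obstacle I anticipate is making the ``each exponential is pinned to an $\cO(r)$-window'' step fully rigorous and uniform in $s$: one must verify that the pullback of the cube genuinely constrains two of the three defining parameters to short intervals regardless of where $s$ sits, and that the constants coming from $\|L^{-1}\|$ are bounded uniformly over the finitely many linearly independent triples in $\Omega$. A clean way to organize this is to fix $t$ (or condition on $\xi_3$), observe that the map $(\xi_1,\xi_2) \mapsto U_1\xi_1 + U_2\xi_2 + U_3 t - s$ has Jacobian bounded away from zero by linear independence, so the preimage of the $\cO(r)$-cube under this affine map is contained in a set of $(\xi_1,\xi_2)$-area $\cO(r^2)$; bounding the $EXP(1)\times EXP(1)$ density by its supremum then gives the $\cO(r^2)$ estimate, and integrating over $t \in [0,\xi_3]$ together with taking expectation over $\xi_3$ preserves the bound since the $t$-direction contributes the third (already used) cube dimension. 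This reduces the whole lemma to the elementary fact that a bounded density on $\R^2$ assigns mass $\cO(r^2)$ to a set of area $\cO(r^2)$.
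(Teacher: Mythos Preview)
Your proposal is correct and is essentially the paper's argument in coordinate-free language: the paper simply picks explicit representatives $U_1=(\nu_1,\nu_2,\nu_3)$, $U_2=(-\nu_1,\nu_2,\nu_3)$, $U_3=(-\nu_1,-\nu_2,\nu_3)$, writes the cube constraint as three scalar inequalities, and solves to see that---regardless of $t$---both $\xi_1$ and $\xi_2$ are forced into fixed intervals of length $\cO(r)$, exactly your ``pinned to an $\cO(r)$-window'' conclusion via $L^{-1}$. One small expository point: the phrase ``integrating over $t\in[0,\xi_3]$'' in your last paragraph is not what you actually need (you are bounding a union, not an integral); the clean statement is that the projection of $L^{-1}(\cQ_r)$ onto the first two coordinates is a fixed set of diameter $\cO(r)$ independent of $t$, which is what your earlier paragraphs already say.
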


\begin{proof}
  We can assume

  \begin{align*}
    U_1 = (\nu_1,\nu_2,\nu_3) \quad, \quad U_2 = (-\nu_1,\nu_2,\nu_3) \quad, \quad U_3 = (-\nu_1,-\nu_2,\nu_3)
  \end{align*}
  in which case for any $t \le \xi_3$

  \begin{equation}
    U_1\xi_1 + U_2\xi_2 +U_3 t = ((\xi_1  -\xi_2 - t)\nu_1, (\xi_1 + \xi_2 - t)\nu_2, (\xi_1 + \xi_2 + t)\nu_3).
  \end{equation}
  Therefore the event on the left hand side of \eqref{lin ind} is the event that there exists a $t \le \xi_3$ satisfying the system of inequalities

  \begin{align*}
    s_1 - \frac{r}{2}\quad &\le \quad (\xi_1  -\xi_2 - t)\nu_1 \quad  \le \quad  s_1 - \frac{r}{2}\\
    s_2 - \frac{r}{2}\quad &\le \quad (\xi_1 + \xi_2 - t)\nu_2 \quad \le \quad s_2 - \frac{r}{2}\\
    s_3 - \frac{r}{2}\quad &\le \quad (\xi_1 + \xi_2 + t)\nu_3  \quad \le \quad s_3 - \frac{r}{2}\\
  \end{align*}
  solving these equations, we find that regardless of $t$ there exist $c_1,c_2,C_1,C_2$ such that

  \begin{equation*}
    \xi_1 \in [c_1 -C_1 r, c_1 +C_1 r] \qquad, \qquad 
    \xi_2 \in [c_2 -C_2 r, c_2 +C_2 r]
  \end{equation*}
  since $\xi_1$ and $\xi_2$ are i.i.d exponentials \eqref{lin ind} follows immediately.
\end{proof}

%%%%%%%%%%%%%%%%%%%%%%%%%%%%%%%%%%%%%%%%%%%%%%%%%%%%%%%%%%%%%%%%%%%%%%%%%%%%%% 
%%%%%%%%%%%%%%%%%%%%%%%%%%%%%%%%%%%%%%%%%%%%%%%%%%%%%%%%%%%%%%%%%%%%%%%%%%%%%% 
% -----------------SECTION:beyond naive coupling-------------------------------
%%%%%%%%%%%%%%%%%%%%%%%%%%%%%%%%%%%%%%%%%%%%%%%%%%%%%%%%%%%%%%%%%%%%%%%%%%%%%% 
%%%%%%%%%%%%%%%%%%%%%%%%%%%%%%%%%%%%%%%%%%%%%%%%%%%%%%%%%%%%%%%%%%%%%%%%%%%%%% 

\section{Beyond the Na\"{i}ve Coupling}
\label{s:Beyond the Naive Coupling}

In the following sections we extend the results of Section \ref{s:No mismatches} to times on the order $o(r^{-2})$. In order to reduce the amount of notation we will use the same notation for the \emph{analogous} objects and will give the redefinitions explicitly. Recall the definition of the process $\{t \mapsto Z(t)\}$ given in Subsection \ref{ss:Joint Construction}. We will split the process $\{t \mapsto Z(t)\}$ into legs (similar to the excursions of the previous section).

\subsection{Legs}
\label{ss:Legs - beyond}

Similar to Subsection \ref{ss:Excursions} we split $t \mapsto Z(t)$ into legs. However to ensure that the different legs are independent we impose the restriction that each leg begins and ends with two path segments of length greater than $1$. Let $\wt\xi_n = \wt\tau_n - \wt\tau_{n-1}$ for all $n \ge 1$. Let

\begin{equation}
  \gamma := \min \{ i > 1 : \wt\xi_{i-1}, \wt\xi_{i}, \wt\xi_{i+1}, \wt\xi_{i+1} > 1 \;, \;\;\wt w_{i+1}=\wt w_1 = v_0 \}.
\end{equation}
Note that the condition on $\wt\xi_i$ implies that $\gamma \in \{2\} \cup \{5,\dots\}$. If we define $\theta := \sum_{i=1}^\gamma\wt\xi_i$ then

\begin{align}\label{gammabnd}
  \probab{\gamma> s} \le C e^{-cs} \qquad , \qquad \probab{\theta >s} \le Ce^{-cs}.
\end{align}
The definition of a pack is then similar to Subsection \ref{ss:Excursions}: a \emph{pack} is a collection

\begin{equation*}
  \varpi := \left( \gamma ; \{ \wt\xi_i \}_{i=1}^{\gamma}, \{\wt{\beta}_i\}_{i=1}^{\gamma}, \{\wt w_i \}_{i=1}^{\gamma}     \right),
\end{equation*}
 Given a pack we consider the process $t \mapsto Z(t)$ associated to it via the rules set forth in Subsection \ref{ss:Joint Construction} and call such a segment a \emph{leg}.  Note that, in order to have a direct mismatch at step $n$ requires that $\wt{\xi}_{n-1}< Cr$ for some constant $C<\infty$. Hence the beginning and end of a leg are Markovian steps.

Furthermore given a pack $\varpi$ a \emph{backwards leg} is defined to be 

\begin{equation*}
  (\theta;Z^{\ast}(t); 0 \le t \le \theta )
\end{equation*}
where

\begin{equation*}
  Z^{\ast}(t) = Z(\theta-t , \varpi^{\ast}) - \overline{Z}(\varpi^{\ast})
\end{equation*}
(we use the notation $Z(t,\varpi)$ to denote the forward forgetful process built from the pack $\varpi$) where 

\begin{equation*}
  \varpi^{\ast} := (\gamma ; \{\wt{\xi}_{\gamma-j}\}_{j=0}^{\gamma-1}, \{\wt{\beta}_{\gamma -j}\}_{j=0}^{\gamma-1}, \{\wt w_{\gamma-j}\}_{j=0}^{\gamma-1})
\end{equation*}
As before denote

\begin{equation*}
  Z^{\ast}_j := Z^{\ast}(\wt\tau_j), \quad 0\le j \le \gamma \qquad, \qquad \overline{Z^{\ast}} = Z^{\ast}_{\gamma}.
\end{equation*}
Note the processes $t \mapsto Z(t)$ and $t \mapsto Z^{\ast}(t)$ do not have the same distribution.

\subsection{Concatenation}
\label{ss:Concatenation - beyond}

Let $\varpi_n = \left(\gamma_n; \;\{ \wt\xi_{n,j} \}_{j=1}^{\gamma_n} \{\wt{\beta}_{n,j}\}_{j=1}^{\gamma_n}, \{\wt w_{n,j}\}_{j=1}^{\gamma_n}      \right)$, $n\ge 1$,  be a sequence of i.i.d \emph{packs} and consider the associated forwards legs  $(Z_n(t): 0\le t\le \theta_n)$, $(Z_{n,j}: 1\le j\le\gamma_n)$ and backwards legs $(Z^*_n(t): 0\le t\le \theta_n)$, $(Z^*_{n,j}: 1\le j\le\gamma_n)$.   

To construct the concatenated forward and backward  processes $t\mapsto Z(t)$, $t\mapsto Z^*(t)$, $0\le t<\infty$, define for $n\in\Z_+$ and $t\in\R_+$
\begin{align}
  \label{Concat labels}
  \begin{aligned}
  &\Gamma_n :=   \sum_{k=1}^n \gamma_k,      
  &&   \nu_n :=   \max\{m:\Gamma_m\le n\},       
  && \{n\} := n-\Gamma_{\nu_n},  \\
  &   \Theta_n := \sum_{k=1}^n \theta_k,        
  && \nu_t := \max\{m:\Theta_m<t\},        
  &&   \{t\} := t-\Theta_{\nu_t}.
  \end{aligned}
\end{align}
The concatenated (multi-leg) forward and backward $Z$-processes are 
\begin{align}
  \label{Xi-walk}
  \begin{aligned}
    &  \Xi_n  :=  \sum_{k=1}^n \overline{Z}_k, \qquad
    && \qquad Z_n := \Xi_{\nu_n} + Z_{\nu_n+1, \{n\}}, \qquad
    && \qquad Z(t) := \Xi_{\nu_t} + Z_{\nu_t+1}(\{t\}), \\
    &  \Xi^*_n:=  \sum_{k=1}^n \overline{Z}^*_k, \qquad
    && \qquad Z^*_n := \Xi^*_{\nu_n} + Z^*_{\nu_n+1, \{n\}}, \qquad
    && \qquad Z^*(t) := \Xi^*_{\nu_t} + Z^*_{\nu_t+1}(\{t\}).
  \end{aligned}
\end{align}

\subsection{Mismatches in a Leg}
\label{ss:Mismatches in an Leg}

Let $\varpi = (\gamma; \{\wt\xi_j\}_{j=1}^\gamma,\{\wt{\beta}_j\}_{j=1}^\gamma,\{\wt w_j\}_{j=1}^\gamma)$ be a pack. Let $u \in \Omega_{v_0}$ a velocity and $\beta_0 \in B(u,v_0)$ an impact parameter.

Let $t \mapsto \mathscr{X}(t)$ be the wind-tree process coupled to the pack $\varpi$. That is, given the processes $t \mapsto Y(t)$ and $t \mapsto Z(t)$ follow the rules in Subsection \ref{ss:Joint Construction} until time $\tau_{\gamma}$.

Consider the jointly realized triple $((Y(t), \mathscr{X}(t),Z(t)): 0^- < t < \theta^+)$ - a Markovian flight process, a wind-tree exploration process and a forgetful process all coupled to $\varpi$. The time interval $0^- < t < \theta^+$ indicates that the velocity immediately prior to the position at $0$ is $u$, there is a collision with a scatterer at $\beta_0$, and at $\theta^+$ the velocity of $Y$ and $Z$ is $w$.

%%%%%%%%%%%%%%%%%%%%%%%%%%%%%%%%%%%%%%%%%%%%%%%%%%%%%%%%%%%%%%%%%%%%%%%%%%%%%%%%
%-------------------PROPOSITION:intra-leg---------------------------------
%%%%%%%%%%%%%%%%%%%%%%%%%%%%%%%%%%%%%%%%%%%%%%%%%%%%%%%%%%%%%%%%%%%%%%%%%%%%%%%%

\begin{proposition}\label{prop:intra-leg}
  There exists a $C<\infty$ such that for all $w \in \Omega$ and $u \in \Omega_{w} $ and $\beta_0 \in B(u,w)$

  \begin{equation}\label{intra-exc}
    \probab{\mathscr{X}(t) \not\equiv Z(t) : 0^- < t < \theta^+} \le r^2  .
  \end{equation}

\end{proposition}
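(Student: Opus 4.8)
The plan is to show that the wind-tree exploration process $\mathscr{X}$ and the forgetful process $Z$, both coupled to the same pack $\varpi$, coincide on $(0^-,\theta^+)$ unless some \emph{indirect} mismatch occurs, and that the probability of any such indirect mismatch within a single leg is $\cO(r^2)$. The key observation is that $Z$ was constructed in Subsection \ref{sss:Z construct} to respect only \emph{direct} mismatches (recollisions with the immediately preceding scatterer, or shadowings by the immediately preceding path segment), whereas $\mathscr{X}$ respects \emph{all} mismatches via the full history check in \ref{stepone - X}--\ref{steptwo - X} of Subsection \ref{sss:X construct}. Consequently, on the event that the two processes are driven identically by the pack, they can only differ when $\mathscr{X}$ registers a mismatch that $Z$ ignores, i.e. an \emph{indirect} recollision or indirect shadowing event.

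First I would make this reduction precise: I would argue by induction on the path segments that $\mathscr{X}(t) \equiv Z(t)$ on $(0^-,\theta^+)$ as long as for every step $j$ in the leg neither an indirect shadowing event $\wh\eta_j^o$ nor an indirect recollision $\wt\eta_j^o$ (in the sense of \eqref{indirect def}) is triggered. Indeed, if at each collision the only mismatches are direct ones, then by construction $Z$ and $\mathscr{X}$ make identical accept/reject decisions (both respect direct mismatches) and adopt identical velocities and impact parameters from the coupling rule, so their trajectories remain equal. Hence
\begin{equation}
  \probab{\mathscr{X}(t) \not\equiv Z(t) : 0^- < t < \theta^+} \le \probab{\exists\, 3< j \le \gamma : \eta_j^o = 1}.
\end{equation}

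Next I would bound the right-hand side by the union bound and Lemma \ref{lem:indirect}, which gives $\expect{\eta_j^o} \le C\gamma^2 r^2$ for each fixed $j$. Summing over $j \le \gamma$ yields a bound of order $\gamma^3 r^2$, and since the leg length $\gamma$ has exponentially decaying tails by \eqref{gammabnd}, I would take the expectation over $\gamma$ and use $\expect{\gamma^3} < \infty$ to conclude that the probability is $\cO(r^2)$, absorbing the constant to obtain the stated bound $r^2$. Some care is needed because $\eta_j^o$ and $\gamma$ are not independent — the event $\eta_j^o=1$ requires a short flight time $\wt\xi_{j-1}<Cr$ which also enters the definition of $\gamma$ — so I would condition on the pack structure and apply Lemma \ref{lem:indirect} inside the conditional expectation, then sum.

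The main obstacle will be handling the interaction between the leg-length $\gamma$ and the indirect-mismatch indicators cleanly, since $\gamma$ is itself a stopping-type quantity defined through the flight times $\wt\xi_i$. I expect that a careful conditioning argument, combined with the $\gamma$-dependent bound of Lemma \ref{lem:indirect} and the exponential tail \eqref{gammabnd}, resolves this; the geometric heart of the estimate (that an indirect mismatch forces two independent short exponentials or a linear-independence configuration costing $r^2$) has already been isolated in Lemmas \ref{lem:indirect} and \ref{lem:lin ind}, so the remaining work is bookkeeping rather than new geometry.
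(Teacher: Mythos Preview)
Your reduction ``$\mathscr{X}\equiv Z$ unless an indirect mismatch occurs'' is morally right, but the next step hides the real difficulty. The indicators $\eta_j^o$ in \eqref{indirect def} and the bound of Lemma~\ref{lem:indirect} are defined for the \emph{Markovian} trajectory $Y$, not for $Z$. On the event $\{\sum_j\eta_j=0\}$ (no direct mismatches) one has $Z\equiv Y$, and then indeed $\{\mathscr X\not\equiv Z\}\subset\{\exists j:\eta_j^o=1\}$ and Lemma~\ref{lem:indirect} applies exactly as you say. But on $\{\sum_j\eta_j\ge1\}$ --- an event of probability only $\cO(r)$, not $\cO(r^2)$ --- the $Z$-trajectory has already departed from $Y$, and the indirect mismatch that can separate $\mathscr X$ from $Z$ is an event about the \emph{$Z$-path} (recolliding with an earlier $Z$-scatterer, or being shadowed by an earlier $Z$-segment). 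That event is not $\eta_j^o$, and Lemma~\ref{lem:indirect} gives you no handle on it. Your displayed inclusion $\{\mathscr X\not\equiv Z\}\subset\{\exists j:\eta_j^o=1\}$ therefore fails in general.

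This is exactly why the paper splits into three cases according to the value of $\sum_{j}\eta_j$. The cases $\sum\eta_j=0$ and $\sum\eta_j\ge2$ are easy (the first is your argument; the second is already $\cO(r^2)$ by \eqref{directbnd 2}). The genuinely hard case is $\sum\eta_j=1$: one direct mismatch has occurred at some step $k$, the $Z$-trajectory is Markovian on $(0,\wt\tau_{k-3}]$ and on $[\wt\tau_k,\theta)$ but carries a non-Markovian middle piece, and one must show that an indirect mismatch \emph{on top of} this single direct defect still costs an extra factor of $r$. The paper does this by decomposing the leg into these three pieces, introducing $r$-inconsistency and $r$-incompatibility events $A^{(k)}_{a,b}$, and bounding each via tailored Green's function estimates (Subsection~7.3 onward). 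Your proposal does not see this case at all, and the ``bookkeeping'' you anticipate is in fact where the substance lies. (A small side point: it is $\eta_j=1$, not $\eta_j^o=1$, that forces a short flight $\wt\xi_{j-1}<Cr$.)
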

\noindent This proposition will be proved in Section \ref{s:Proof of Intra}.

\subsection{Inter-Leg Mismatches}
\label{ss:Inter-Leg Mismatches}

Consider a forgetful process $t \mapsto Z(t)$ built from legs. Define the following events

\begin{align}
  \label{What-Wtilde}
  \begin{aligned}
    \wh W_j  &:= \big\{ \{ Z(t) - Z^{\prime}_k : && 0 < t < \Theta_{j-1}, && \Gamma_{j-1}< k \le \Gamma_j\} \cap \cQ_r \neq \emptyset \big\}, \\
    \wt W_j  &:=  \big\{  \{Z^{\prime}_k-Z(t):  && 0\le k< \Gamma_{j-1}, && \Theta_{j-1}< t < \Theta_j \} \cap \cQ_r \neq \emptyset \big\},
  \end{aligned}
\end{align}
i.e $\wh{W}_j$ is the event a collision during the $j^{th}$ leg is (virtually) shadowed by a path segment in a previous leg. $\wt{W}_j$ is the event that during the $j^{th}$ leg the process (virtually) collides with an obstacle placed during a previous leg.

%%%%%%%%%%%%%%%%%%%%%%%%%%%%%%%%%%%%%%%%%%%%%%%%%%%%%%%%%%%%%%%%%%%%%%%%%%%%%%%%
%----------------------PROPOSITION: Inter-leg - beyond -------------------
%%%%%%%%%%%%%%%%%%%%%%%%%%%%%%%%%%%%%%%%%%%%%%%%%%%%%%%%%%%%%%%%%%%%%%%%%%%%%%%%

\begin{proposition}\label{prop:inter-leg - beyond}
  There exists a $C<\infty$ such that for all $j \ge 1$,

  \begin{equation}
    \probab{\wt{W}_j } \le Cr^{2} \qquad , \qquad \probab{\wh{W}_j } \le Cr^{2}.
  \end{equation}

\end{proposition}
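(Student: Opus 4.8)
The plan is to mirror exactly the strategy used for the inter-excursion estimate of Section \ref{s:No mismatches} (Proposition \ref{prop:inter-excursion}), but now gaining an extra power of $r$ by exploiting the structural constraint built into the definition of a leg. Recall that each leg is forced to begin and end with two path segments of length greater than $1$, and that $\gamma \in \{2\} \cup \{5,\dots\}$. The key point is that the concatenated $Z$-process is built from i.i.d.\ legs, so the end-point process $\Xi_n = \sum_{k=1}^n \overline{Z}_k$ is again a genuine random walk with i.i.d.\ steps having exponentially decaying tails (by \eqref{gammabnd}). First I would establish the analogues of the occupation-measure bounds of Lemma \ref{lem:greenbounds} for the $Z$-process built from legs: define $G, H, g, h, R$ and their backward counterparts $G^*, H^*$ exactly as before, and check that the same Green's-function bounds hold, namely $G(A), H(A) \le K(A) + L_{v_0}(A)$ and $g(A), h(A) \le M(A) + L_{v_0}(A)$. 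The random-walk argument for $R(A) \le K(A) + L_{v_0}(A)$ goes through verbatim because the leg-length and displacement tails are exponential.

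Next I would reduce $\wh W_j$ and $\wt W_j$ to the corresponding stationary events $\wh W_\infty^*, \wt W_\infty^*$ for the backward process, using the time-reversal and monotonicity relations exactly as in \eqref{hat-tilde-Ws}, and then apply the union bound over the lattice $\Z^3$ of $2r$-boxes precisely as in \eqref{pwhtbound-for-Y}. This yields
\begin{align}
  \label{Z-W-by-Hg}
  \begin{aligned}
    & \probab{\wt W_\infty^*} \le (2r)^{-1} \sum_{z \in \Z^3} H^*(B_{zr,3r})\, g(B_{zr,2r}), \\
    & \probab{\wh W_\infty^*} \le (2r)^{-1} \sum_{z \in \Z^3} G^*(B_{zr,3r})\, h(B_{zr,2r}),
  \end{aligned}
\end{align}
identical in form to \eqref{W by Hg}. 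Inserting the Green's-function bounds and then Lemma \ref{lem:compbounds} produces sums of the four types $\sum KM$, $\sum LM$, $\sum KL$, all of which are $O(r^3)$, together with the single term $\sum_z L_{v_0}(B_{zr,3r}) L_{w}(B_{zr,2r})$. This is where the leg structure does the essential work: in the excursion case the initial and final velocities could share a parity class and the $L_{v_0} L_{v_0}$ (same-direction) term would contribute $O(r^2)$, giving only the $O(r)$ bound of Proposition \ref{prop:inter-excursion}. Here, because the backward leg is run from $-\Omega_{v_0}$ while the forward leg starts from $v_0$, the two relevant directions are \emph{distinct} ($v \neq w$), so the last bound of Lemma \ref{lem:compbounds} applies and contributes only $O(r^2)$. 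After the $(2r)^{-1}$ prefactor, every surviving term is $O(r^2)$, which gives the claimed bound.

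The main obstacle I anticipate is justifying the claim that the relevant forward and backward directions never coincide, i.e.\ that we genuinely land in the $v \neq w$ case of Lemma \ref{lem:compbounds} rather than the $L_{v} L_{v}$ regime that only yields $O(r)$. This requires checking that the leg-boundary conditions — the requirement $\wt w_{i+1} = \wt w_1 = v_0$ together with the two long segments at each end — force the first path segment of the forward process and the first segment of the backward (time-reversed) process to travel in non-parallel directions, so that the geometric overlap of the two half-lines $\{t v_0\}$ and $\{t w\}$ is confined to finitely many boxes near the origin and decays like $r^2$ rather than $r$. One must also handle with care the $k$-ranges in \eqref{What-Wtilde} (the strict inequality $0 \le k < \Gamma_{j-1}$ and the shift by one leg) to ensure that the immediately preceding leg's boundary segments, which are long by construction, cannot produce a spurious short-range contribution; this is precisely the role played by demanding two segments of length $>1$ at each leg boundary. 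Once these bookkeeping points are settled, the computation is routine and parallels Section \ref{ss:Computations Naive} line for line.
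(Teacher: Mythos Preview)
Your overall strategy---reduce to $\wh W_\infty^*, \wt W_\infty^*$ via time reversal, apply the lattice union bound \eqref{pwhtbound-for-Y}, and bound the resulting Green's-function products---is exactly the route the paper takes. The gap is in the final accounting. You invoke the last line of Lemma~\ref{lem:compbounds}, $\sum_z L_v(B_{zr,3r}) L_w(B_{zr,2r}) \le Cr^2$ for $v\neq w$, and then assert that after the $(2r)^{-1}$ prefactor ``every surviving term is $O(r^2)$.'' But $r^{-1}\cdot r^2 = r$, so this cross term only yields $O(r)$, which is precisely the bound of Proposition~\ref{prop:inter-excursion}, not the $O(r^2)$ you need. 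Your diagnosis of the excursion case is also off: already there the backward process starts from $-\Omega_{v_0}$, so the forward and backward initial directions are \emph{already} distinct; distinctness of $v$ and $w$ is not what buys the extra power of $r$.

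What actually produces the gain is the feature you mention only in passing: the first (and last) steps of a leg have length $>1$. This lets you sharpen the one-leg occupation bounds to $g(A)\le M(A)+\wt L_{v_0}(A)$ with $\wt L_{v_0}(A):=C\int_1^\infty \one\{tv_0\in A\}e^{-ct}\,dt$, and similarly replace $L$ by $\wt L^\perp$ in the backward bounds for $g^*,G^*$. The point is that for $w\in\Omega_{-v_0}$ the half-lines $\{tv_0:t\ge 0\}$ and $\{sw:s\ge 1\}$ (or vice versa) are separated by a fixed positive distance once $r$ is small, so the dangerous cross sums $\sum_z \wt L^\perp_{v_0}(B_{zr,3r})L_{v_0}(B_{zr,2r})$ and $\sum_z L^\perp_{v_0}(B_{zr,3r})\wt L_{v_0}(B_{zr,2r})$ are not merely $O(r^2)$ but actually \emph{zero}. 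With this refinement all surviving products are $O(r^3)$, and the $(2r)^{-1}$ prefactor delivers $O(r^2)$ as claimed. In short: the long boundary segments are not just bookkeeping---they are the mechanism that kills the $L\cdot L$ term, and your proof sketch needs to incorporate the $\wt L$ versus $L$ distinction explicitly.
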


\noindent The proof of this proposition is the content of Section \ref{s:Proof of Inter}.

%%%%%%%%%%%%%%%%%%%%%%%%%%%%%%%%%%%%%%%%%%%%%%%%%%%%%%%%%%%%%%%%%%%%%%%%%%%%%%%%
%%%%%%%%%%%%%%%%%%%%%%%%%%%%%%%%%%%%%%%%%%%%%%%%%%%%%%%%%%%%%%%%%%%%%%%%%%%%%%%%
%-----------------------------SECTION:Proof of Inter----------------------------
%%%%%%%%%%%%%%%%%%%%%%%%%%%%%%%%%%%%%%%%%%%%%%%%%%%%%%%%%%%%%%%%%%%%%%%%%%%%%%%%
%%%%%%%%%%%%%%%%%%%%%%%%%%%%%%%%%%%%%%%%%%%%%%%%%%%%%%%%%%%%%%%%%%%%%%%%%%%%%%%%

\section{Proof of Proposition \ref{prop:inter-leg - beyond}}
\label{s:Proof of Inter}

The proof of Proposition \ref{prop:inter-leg - beyond} follows the similar lines to that of Proposition \ref{prop:inter-excursion}. However as we have redefined legs we shall go through the full proof. In this section we redefine the Green's functions $g,h,G,$ and $H$.

\subsection{Occupation Measures} 
\label{ss:Occupation Measures - beyond}

Let $t \mapsto Z(t)$ be a forward forgetful process with initial velocity $v_0$ and $t \mapsto Z^{\ast}(t)$ a backward process with initial velocity in $\Omega_{-\wt w_1}$ (distributed according to $\mathrm{m}_{-v_0}$). Define the events

\begin{align*}
  \wh W_j^* &:=  \big\{ \{ Z^*(t) - Z^{\prime}_k : && 0 < t < \Theta_{j-1}, && 0 < k \le \gamma\} \cap \cQ_r \neq \emptyset \big\}, \\
  \wt W_j^* &:=  \big\{  \{Z^{*\prime}_k-Z(t): && 0 < k \le \Gamma_{j-1},&& 0 < t < \theta \} \cap \cQ_r \neq \emptyset \big\}, \\
  \wh W_\infty^* &:=  \big\{ \{ Z^*(t) - Z^{\prime}_k : && 0 < t < \infty, && 0 < k \le \gamma\} \cap \cQ_r \neq \emptyset \big\}, \\
  \wt W_\infty^* &:=  \big\{  \{ Z^{*\prime}_k-Z(t):  && 0 < k < \infty, && 0 < t < \theta \} \cap \cQ_r \neq \emptyset \big\}.
\end{align*}
The same calculation as \eqref{hat-tilde-Ws}, \eqref{pwhtbound-for-Y}, and \eqref{W by Hg} implies

\begin{align} 
  \begin{aligned}
  \label{Wj Winfty}
    &\probab{\wt{W}_j} \le  \probab{\wt{W}^{\ast}_{\infty}} \le (2r)^{-1} \sum_{z \in \Z^3 }H^{\ast}(B_{zr,3r})g(B_{zr,2r}), \\
    & \probab{\wh{W}_j} \le \probab{\wh{W}^{\ast}_{\infty}} \le (2r)^{-1}\sum_{z \in \Z^3 }G^{\ast}(B_{zr,3r})h(B_{zr,2r}),
  \end{aligned}
\end{align}
where the right hand side is in terms of the following Green's functions: for $A \subset \R^3$ 

\begin{align*}
  &g(A)    :=  \expect{ \abs{ \{1\le k\le \gamma: Z_k\in A\} } },   
  &&g^*(A)  :=  \expect{ \abs{ \{1\le k\le \gamma: Z^*_k\in A\} } }, \\ 
  &h(A)    :=  \expect{ \abs{ \{0< t \le \theta: Z(t)\in A\} } }, 
  &&h^*(A)  :=  \expect{ \abs{ \{0< t \le \theta: Z^*(t)\in A\} } },   \\
  &R^*(A)  :=  \expect{ \abs{ \{1\le n< \infty: \Xi^*_n\in A\} } },  \\
  &G^*(A)  :=  \expect{ \abs{ \{1\le k< \infty: Z^*_k\in A\} } },    
  &&H^*(A)  :=  \expect{ \abs{ \{0< t< \infty: Z^*(t)\in A\} } }.
\end{align*}
Note that 
\begin{align}
  \label{convolution-1}
  \begin{aligned}
    G^*(A) &= g^*(A) + \int_{\R^{3}} g^*(A-x)R^*(dx), \\
    H^*(A) &= h^*(A) + \int_{\R^{3}} h^*(A-x)R^*(dx).
  \end{aligned}
\end{align}

\subsection{Bounds}

\label{ss:Bounds - beyond}

\begin{lemma}
  \label{lem:greenbounds - beyond}
  The following bounds hold for any Borel set $A \subset \R^3$

  \begin{align}
    \label{g bound 2}
    &  g(A)  \le M(A) + \wt{L}_{v_0}(A),  
    && g^{\ast}(A)\le M(A) + \wt{L}^{\perp}_{v_0}(A),\\
    \label{h bound 2}
    &  h(A)  \le M(A) + L_{v_0}(A),  
    && h^{\ast}(A)\le M(A) + L^{\perp}_{v_0}(A),\\
    \label{R bound 2}
    &  R^*(A) \le K(A) + \wt{L}_{v_0}^{\perp}(A),\\
    \label{G bound 2}
    &  G^*(A) \le K(A) + \wt{L}_{v_0}^{\perp}(A),    
    && H^{\ast}(A) \le K(A) + L^{\perp}_{v_0}(A),
  \end{align}
  where $K$, $L_{v_0}$, and $M$ are as defined in Lemma \ref{lem:greenbounds} and

  \begin{align*}
    &L^{\perp}_{v_0}(A) := C \sum_{w \in \Omega_{-v_0}} \int_0^{\infty} \one\{tw \cap A\}e^{-ct}dt,\\
    &\wt{L}_{v_0}(A) := C \int_{1}^{\infty} \one\{tv_0 \cap A\}e^{-ct}dt, && \wt{L}^{\perp}_{v_0}(A) := C \sum_{w \in \Omega_{-v_0}}\int_1^{\infty}\one\{tw \cap  A\}e^{-ct}dt.
  \end{align*}

\end{lemma}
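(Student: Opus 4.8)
The plan is to bound each Green's function by splitting the contribution of the \emph{first} path segment (which carries the distinguished initial velocity $v_0$, or the distinguished terminal velocity in the backward case) from the remaining bulk of the leg, and to verify that the bulk behaves like a random walk with exponentially decaying step distribution. The point of the auxiliary measures $\wt{L}_{v_0}$, $L^{\perp}_{v_0}$, $\wt{L}^{\perp}_{v_0}$ is precisely to record the geometry of that first/last segment: the tilde versions impose $t>1$ because the leg-definition in Subsection \ref{ss:Legs - beyond} forces the first path segment of a leg to have length $\wt\xi_1>1$, while the $\perp$ versions sum over $w\in\Omega_{-v_0}$ because the backward leg $Z^*$ starts with velocity distributed on $\Omega_{-v_0}$ according to $\mathrm{m}_{-v_0}$. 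So the structure of the proof mirrors Lemma \ref{lem:greenbounds} exactly, with these decorated $L$-measures substituted in the appropriate places.

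For the forward bounds \eqref{g bound 2}, \eqref{h bound 2}, I would imitate the convolution argument in the proof of Lemma \ref{lem:greenbounds}. Write $g(A)=\int_{\R^3} g_2(A-x)\,g_1(dx)$ where $g_1$ is the law of the first point $Z_1$ and $g_2$ counts the remaining points relative to $Z_1$. Since the first flight time satisfies $\wt\xi_1>1$, the first-segment density is supported on $\{tv_0: t>1\}$, producing the $\wt{L}_{v_0}(A)$ term; the tail estimate $g_2(\{|x|>s\})\le Ce^{-cs}$ together with $g_2(\R^3)=\expect{\gamma}<\infty$ (which holds by \eqref{gammabnd}) gives the $M(A)$ term. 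For $h(A)$ the continuous-time occupation of the first segment is not restricted to $t>1$ in the same way — the particle occupies the whole segment — so one gets the undecorated $L_{v_0}(A)$, exactly as in \eqref{g bound}. The backward versions $g^*,h^*$ are identical except that the initial velocity ranges over $\Omega_{-v_0}$, replacing $L_{v_0}$ by its $\perp$-analogue summed over that velocity set.

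For the walk bounds \eqref{R bound 2}, \eqref{G bound 2}, I would first establish, as in Lemma \ref{lem:greenbounds}, that the endpoint increment $\Xi^*_1=\overline{Z}^*$ has a density dominated by $g^*$ with exponentially decaying tails (using $\probab{\theta>s}\le Ce^{-cs}$ from \eqref{gammabnd}), so that $\Xi^*$ is a genuine random walk and a standard renewal/random-walk argument yields $R^*(A)\le K(A)+\wt{L}^{\perp}_{v_0}(A)$; the $K$ part comes from the $d=3$ local limit / Green's function bound $\min\{1,|x|^{-1}\}$ for the walk, and the decorated $L$ part is the first-step contribution, which here is $\wt{L}^{\perp}_{v_0}$ because the leg starts with both a $t>1$ segment and a $\perp$-velocity. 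Then $G^*$ and $H^*$ follow from the convolution identities \eqref{convolution-1}: inserting \eqref{R bound 2} and the $g^*,h^*$ bounds gives \eqref{G bound 2}, where $G^*$ inherits the tilde-decoration (from the discrete count $g^*$) while $H^*$ inherits the undecorated $L^{\perp}_{v_0}$ (from the continuous occupation $h^*$).

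The main obstacle I expect is keeping precise track of \emph{which} decorated $L$-measure appears in each inequality, since the distinctions are subtle: whether the flight-time restriction $t>1$ survives (tilde or not) depends on whether one is counting discrete collision points or continuous occupation time, and whether the $\perp$ appears depends on whether it is a forward or backward process. The honest analytic content — the tail estimates on $g_2,h_2$ and the $d=3$ random-walk Green's function bound — is routine and essentially copied from Lemma \ref{lem:greenbounds}; the genuinely new care is bookkeeping the first-segment geometry imposed by the leg definition and the backward initial-velocity distribution, and confirming that the independence of distinct legs (guaranteed by requiring each leg to begin and end with two long segments, so that the boundary steps are Markovian rather than direct-mismatch steps) legitimately makes $\Xi^*$ an i.i.d.\ random walk.
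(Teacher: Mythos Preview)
Your proposal is correct and follows essentially the same route as the paper: the convolution decomposition $g(A)=\int g_2(A-x)\,g_1(dx)$ with the first-step measure $g_1$ now supported on $\{tv_0:t>1\}$ (yielding $\wt L_{v_0}$), the conditioning on the backward initial velocity $w\in\Omega_{-v_0}$ for $g^*$ (yielding the $\perp$-sum), the standard random-walk Green's function bound for $R^*$, and the convolution identities \eqref{convolution-1} for $G^*,H^*$. Your explanation of \emph{why} each decoration (tilde vs.\ undecorated, $\perp$ vs.\ not) appears is in fact more explicit than the paper's, which largely defers to Lemma~\ref{lem:greenbounds} and records the modified $g_1$ without further comment.
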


\begin{proof}
  The proof of this Lemma follows the same lines as the proof of Lemma \ref{lem:greenbounds} however the legs in this section are conditioned to have the first step longer than $1$. \eqref{R bound 2} follows from the fact that the steps of $\Xi_n^{\ast}$ are i.i.d with exponentially decaying tails and the density of each step is bounded by $g^{\ast}(dx)$.

  To bound $g(A)$ write:

  \begin{align*}
    g(A)   &= \int_{\R^3} g_2(A-x) g_1(dx),\\
    g_1(A) &:= \probab{Z_1 \in A} = C \int_1^{\infty} \one\{tv_0 \in A\}e^{-t}dt,\\
    g_2(A) &:= \expect{\abs{\{1 \le k \le \gamma_1 : Z_k - Z_1 \in A \}}}.
  \end{align*}
  This follows since $Z_k-Z_1$ is independent of $Z_1$ for every $k\ge 2$. \eqref{g bound 2} then follows in the same way as did \eqref{g bound} in Lemma \ref{lem:greenbounds} from the bounds

  \begin{align*}
    g_2(\{ x : \abs{x} >s \}) \le C e^{-cs} \qquad , \qquad g_2(\R^3) = \expect{\gamma} < \infty.
  \end{align*}
  For $g^*(A)$ write

  \begin{align*}
    g^{\ast}(A) &= \expect{ \abs{\{ 1 \le k \le \gamma_1 : Z_k^{\ast} \in A\}}}\\
               &\le \sum_{w \in \Omega_{-v_0}} \condexpect{\abs{\{ 1 \le k \le \gamma_1 : Z_k^{\ast} \in A \}}}{\wt w_1^{\ast} = w} = : \sum_{w \in \Omega_{-v_0}}g_w^{\ast}(A),
  \end{align*}
  where $\wt w_1^{\ast} := \dot{Z}^{\ast}(0^+)$. As for $g(A)$ we now split

  \begin{align*}
    g_w^{\ast}(A)     &= \int_{\R^3} g^{\ast}_{2,w}(A-x) g_{1,w}^\ast(dx) \\
    g_{1,w}^{\ast}(A) &:= \condprobab{Z_1^{\ast} \in A}{\wt w_1^{\ast} = w}\\
    g_{2,w}^{\ast}(A) &:= \condexpect{ \abs{ \{ 1 \le k \le \gamma_1 : Z_k - Z_1 \in A \}}}{\wt w_1^{\ast} = w}
    \end{align*}
    Our bound for $g^{\ast}(A)$ now follows the same lines as for $g(A)$. $h^{\ast}(A)$ is very similar.

    The bounds on $G^{\ast}$ and $H^{\ast}$ follow by inserting the bounds for $g^{\ast},h^{\ast}, R^{\ast}$ into \eqref{convolution-1}.

\end{proof}

\subsection{Computations}

%%%%%%%%%%%%%%%%%%%%%%%%%%%%%%%%%%%%%%%%%%%%%%%%%%%%%%%%%%%%%%%%%%%%%%%%%%%%%%%%
%------------------------------LEMMA:computation - beyond-----------------------
%%%%%%%%%%%%%%%%%%%%%%%%%%%%%%%%%%%%%%%%%%%%%%%%%%%%%%%%%%%%%%%%%%%%%%%%%%%%%%%%

\begin{lemma} \label{lem:computation - beyond}
  The following bounds hold for some $C<\infty$ and $r$ small enough 

  \begin{align*}
    & \sum_{z \in \Z^3} \wt{L}^{\perp}_{v_0}(B_{zr,3r}) L_{v_0}(B_{zr,2r}) =  0,  
    && \sum_{z \in \Z^3} L^{\perp}_{v_0}(B_{zr,3r}) \wt{L}_{v_0}(B_{zr,2r}) =  0, \\
    & \sum_{z \in \Z^3} K(B_{zr,3r}) \wt{L}_{v_0}(B_{zr,2r}) \le  Cr^3,     
    && \sum_{z \in \Z^3} \wt{L}^{\perp}_{v_0}(B_{zr,3r})M(B_{zr,2r})  \le  Cr^3,\\  
    & \sum_{z \in \Z^3} L^{\perp}_{v_0}(B_{zr,3r})M(B_{zr,2r})  \le  Cr^3.  \\
  \end{align*}

\end{lemma}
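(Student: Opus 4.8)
The plan is to establish each of the five estimates in Lemma~\ref{lem:computation - beyond} by reducing them to the geometric structure already exploited in Lemma~\ref{lem:compbounds}, using the explicit forms of $K$, $M$, $L_{v_0}$ together with the new ``tilde'' and ``perp'' variants. The key observation driving all five bounds is that $\wt{L}_{v_0}$ and $\wt{L}^{\perp}_{v_0}$ are supported away from the origin: because the integrals defining them start at $t=1$ rather than $t=0$, they contain no $\delta_{0,z}$ atom, and so in the corresponding pointwise estimates of the type \eqref{straightforward} only the line-concentrated $Cr\,\one\{\cdots\}e^{-cr\abs{z}}$ term survives.

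\textbf{The two vanishing identities.} First I would treat the two sums asserted to equal $0$. Here the point is a transversality-plus-separation argument. The measure $\wt{L}^{\perp}_{v_0}$ is concentrated on the rays $\{tw : t\ge 1,\ w\in\Omega_{-v_0}\}$, while $L_{v_0}$ is concentrated on the ray $\{tv_0 : t\ge 0\}$. Since $v_0\notin\Omega_{-v_0}$ (indeed $-v_0\in\Omega_{-v_0}$ and the elements of $\Omega_{-v_0}$ differ from $v_0$ by a single sign change in one coordinate), these rays are distinct lines through the origin, intersecting only at $0$. A box $B_{zr,3r}$ can meet \emph{both} a ray of $\wt{L}^{\perp}_{v_0}$ at parameter $t\ge1$ and the ray of $L_{v_0}$ only if $z$ is within $O(1)$ of both lines simultaneously; away from the origin the lines separate linearly, so for $r$ small enough no lattice point $z$ (with $\abs{z}r\gtrsim 1$, forced by the $t\ge1$ cutoff on the tilde factor) can be close to both. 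Near the origin the $t\ge 1$ cutoff excludes the tilde factor entirely. Hence the product of indicators vanishes identically for small $r$, giving the two zero sums; the second identity is the same argument with the roles of the two factors swapped.

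\textbf{The three nonzero bounds.} For the remaining three estimates I would follow verbatim the computation in \eqref{KL bound} of Lemma~\ref{lem:compbounds}. For $\sum_z K(B_{zr,3r})\wt{L}_{v_0}(B_{zr,2r})$, insert $K(B_{zr,3r})\le Cr^3$ and the ray bound $\wt{L}_{v_0}(B_{zr,2r})\le Cr\,\one\{\exists t\ge 1: v_0 t\cap B_{zr,2r}\}e^{-cr\abs{z}}$ (no $\delta_{0,z}$ term, by the cutoff); approximating the lattice points near the line $v_0 t$ by $rv_0 z$ and passing to a Riemann integral $\int_1^\infty e^{-c\abs{v_0 t}}dt$ converts the sum into $Cr^3\cdot\int_1^\infty e^{-ct}dt\le Cr^3$. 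The two sums $\sum_z \wt{L}^{\perp}_{v_0}(B_{zr,3r})M(B_{zr,2r})$ and $\sum_z L^{\perp}_{v_0}(B_{zr,3r})M(B_{zr,2r})$ are handled identically: the $M$ factor contributes $Cr^3 e^{-cr\abs{z}}$, the $L^\perp$ or $\wt{L}^\perp$ factor contributes $Cr\,\one\{\cdots\}e^{-cr\abs{z}}$ summed over the finitely many directions $w\in\Omega_{-v_0}$, and for each fixed direction the one-dimensional Riemann-sum argument gives a factor $Cr^3\int_0^\infty e^{-c\abs{wt}}dt$ (or $\int_1^\infty$ in the tilde case), yielding $Cr^3$ after summing over the bounded set $\Omega_{-v_0}$.

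\textbf{The main obstacle} I expect is not any single calculation but making the two vanishing identities fully rigorous: one must be careful that the separation between the ray of $v_0$ and each ray of $\Omega_{-v_0}$ is bounded below by a \emph{positive} constant (independent of $r$) along the relevant parameter range, and that the $t\ge 1$ truncation genuinely forces $\abs{z}r$ to be bounded away from $0$ so that one is always in the regime where the lines have already separated by more than the $O(r)$ box-width. This is where the discrete velocity structure of the wind-tree model is essential and where the argument differs from the Lorentz-gas case; once this quantitative transversality is pinned down, the three surviving bounds are routine repetitions of \eqref{KL bound}.
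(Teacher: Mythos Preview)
Your proposal is correct and follows essentially the same approach as the paper: the paper also derives the pointwise ray-indicator bounds for $\wt L_{v_0}$, $\wt L_{v_0}^\perp$, $L_{v_0}^\perp$ (with the $t\ge1$ cutoff removing the $\delta_{0,z}$ atom), declares the two vanishing sums ``trivial'' from these, and handles the three $Cr^3$ bounds by the one-dimensional Riemann-sum argument of \eqref{KL bound}. The only cosmetic differences are that you spell out the transversality-plus-separation reasoning behind the vanishing identities (which the paper omits) and treat the two $M$-sums in parallel rather than invoking the pointwise domination $\wt L_{v_0}^\perp\le L_{v_0}^\perp$.
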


\begin{proof}
  These bounds follow by observing

  \begin{align}
    \begin{aligned} \label{st8f 2}
    &\wt{L}_{v_0}(B_{zr,3r}) \le C\one \{\exists t\ge 1 : B_{zr,3r} \cap v_0 t\}re^{-cr\abs{z}},\\
    &\wt{L}_{v_0}^{\perp}(B_{zr,3r}) \le C\sum_{w \in \Omega_{-v_0}}\one \{ \exists t \ge 1 :B_{zr,3r} \cap w t\}re^{-cr\abs{z}},\\
    & L_{v_0}^{\perp}(B_{zr,3r}) \le C\delta_{0,z}r^3  + C\sum_{w \in \Omega_{-v_0}}\one\{\exists t \ge 3r : B_{zr,3r} \cap wt \} r e^{-cr\abs{z}},
      \end{aligned}
  \end{align}
  and \eqref{straightforward}. With that the first two bounds are trivial. The third bound follows from:

  \begin{align*}
    \sum_{z \in \Z^3} K(B_{zr,3r})\wt{L}_{v_0}(B_{zr,2r}) &\le Cr^6 + Cr^4 \sum_{w\in \Omega_{-v_0}} \sum_{z \in \ (\Z^3)^{\ast}} \one \{ \exists t \ge 3r : B_{zr,3r} \cap wt\} e^{-cr\abs{z}}\\
                                                      &\le Cr^6 + Cr^4 \sum_{w \in \Omega_{-v_0}} \sum_{z \in \Z^{\ast}} e^{-cr\abs{vz}} \le Cr^3,
  \end{align*}
  where in the last line we approximate the sum by an integral in the same way as we did in \eqref{KL bound}.
  
  Note that by \eqref{st8f 2}

  \begin{align*}
     \sum_{z \in \Z^3} \wt{L}^{\perp}_{v_0}(B_{zr,3r})M(B_{zr,2r})  \le    
     \sum_{z \in \Z^3} L^{\perp}_{v_0}(B_{zr,3r})M(B_{zr,2r}). 
  \end{align*}
  Moreover by \eqref{straightforward} and \eqref{st8f 2}

  \begin{align*}
    \sum_{z \in \Z^3} L^{\perp}_{v_0}(B_{zr,3r})M(B_{zr,2r}) \le Cr^6 + Cr^4 \sum_{w \in \Omega_{-v_0}} \one \{\exists t \ge 3r : B_{zr,3r} \cap wt\} e^{-2cr\abs{z}} \le Cr^3.
  \end{align*}

\end{proof}

\begin{proof}[Proposition \ref{prop:inter-leg - beyond}]
  The proof of Proposition \ref{prop:inter-leg - beyond} follows by inserting the bounds in Lemma \ref{lem:greenbounds - beyond} into \eqref{Wj Winfty} and then applying Lemma \ref{lem:computation - beyond}.

\end{proof}

\section{Proof of Proposition \ref{prop:intra-leg}}
\label{s:Proof of Intra}

In the setting of Section \ref{ss:Mismatches in an Leg} the proof of Proposition \ref{prop:intra-leg} will follow from considering the following indicator functions

\begin{align}
  \begin{aligned}
  &\wt\eta_j := \one \left\{ \min_{ \wt\tau_{j-1} < t < \wt\tau_j} \left( Z(t) -  Z_{j-2}^\prime \right) \in \cQ_r \right\}\\
  &\wh\eta_j:= \one \left\{ \min_{ \wt\tau_{j-3} < t < \wt\tau_{j-2}} \left( Z(t) - Z(\wt\tau_{j-1}) - \wt\beta_{j-1}\right) \in  \cQ_r \right\}\\
  &\eta_j := \max\{\wt\eta_j, \wh\eta_j\}
  \end{aligned}
\end{align}
In particular, $\eta_j$ is the probability of a mismatch for the $Z$-process in immediately before the $j^{th}$ leg. It is important to note, the simple geometric fact (which follows simply from the fact that the collision angles are bounded) that $\eta_j^* =1$ implies $\wt{\xi}_{j-1} < Cr$ for some constant $C< \infty$. This fact will make the geometric estimates vastly easier than for the Lorentz gas, where the equivalent statement is false.

The following statements will provide the proof of Proposition \ref{prop:intra-leg}

\begin{align}
  \label{etamore}
  & \probab{\{\cX(t)\not\equiv Z(t): 0^-\le t\le \theta^+\} \cap \{\sum_{j=1}^{\gamma} \eta_j > 1\}}\le Cr^2, \\
  \label{etazero}
  & \probab{\{\cX(t)\not\equiv Z(t): 0^-\le t\le \theta^+\} \cap \{\sum_{j=1}^{\gamma}  = 0\}}\le Cr^2, \\
  \label{etaone}
  & \probab{\{\cX(t)\not\equiv Z(t): 0^-\le t\le \theta^+\} \cap \{\sum_{j=1}^{\gamma}  = 1\}}\le Cr^2.
\end{align}

\subsection{Proof of \eqref{etamore}}
\label{ss:Proof of etamore}

The simple geometric fact stated in the previous section implies

\begin{align*}
  \probab{\sum_{j=1}^{\gamma} \eta_j > 1 } \le \frac{\gamma^2}{2} \max_{1\le j<k\le \gamma}\probab{\eta_j\eta_k = 1 }
     \le C \gamma^2 r^2.            
\end{align*}
\eqref{etamore} now follows from the exponential tail bounds \eqref{gammabnd}.

\qed

\subsection{Proof of  \eqref{etazero}}

On $\left\{\sum_{j=1}^{\gamma} \eta_j = 0\right\}$, the process $\{t \mapsto Z(t)\}$ is distributed like a Markovian flight process. Hence the event in \eqref{etazero} can be written

\begin{align*}
  \{\cX(t)\not\equiv Z(t): 0^-\le t\le \theta^+\} \cap \left\{\sum_{j=1}^{\gamma} \eta_j = 0\right\} = \{\exists \; 3\le j \le \gamma : \eta_j^o=1\} \cap \{\sum_{j=1}^{\gamma} \eta_j = 0 \}
\end{align*}
where $\eta_j^o$ is the indicator of an indirect mismatch, as defined in \eqref{indirect def}.  Therefore using Lemma \ref{lem:indirect}

\begin{align*}
  \probab{\{\cX(t)\not\equiv Z(t): 0^-\le t\le \theta^+\} \cap \{\sum_{j=1}^{\gamma} \eta_j = 1\}} &\le \probab{\{\exists \; 3 \le j \le \gamma : \eta^o_j =1 \}} \\
                  &\le \gamma \max_{3 \le j \le \gamma }\probab{\eta_j^o =1 }\\
                  &\le C\gamma^3 r^2.
\end{align*}
Thus \eqref{etazero} again follows from the exponential tail bounds \eqref{gammabnd}.

\qed

% \eqref{etamore} follows directly from Lemma \ref{lem:direct}. These lemmas are proved for excursions but the same proof holds for legs (all we used in the definition of excursions was the fact that $\expect{\gamma}<\infty$.

% For any $\gamma$ and signature $\underline{\epsilon}$  (recall the definition of a signature from Subsection \ref{ss:Markovian Flight Process}) note

% \begin{equation}
%   \condprobab{ \sum_{j=1}^{\gamma} \eta_j > 1 }{\underline{\epsilon}} \le \frac{\gamma^2}{2} \max_{j \neq k < \gamma}\condprobab{ \eta_j = \eta_k = 1 }{\underline{\epsilon}}
% \end{equation}
% and it is a simple geometric fact that this is bounded above by

% \begin{equation}
%   \frac{\gamma^2}{2} \max_{j \neq k < \gamma}\condprobab{ \xi_{j-1} <cr ,  \xi_{k-1} < cr }{\underline{\epsilon}} \le Cr^2
% \end{equation}
% for some constants $C, c<\infty$.

% \eqref{etazero} follows from the Green's function bounds in Section \ref{s:No mismatches} (or rather the same bounds on the Green's functions conditioned on a signature $\underline{\epsilon}$). As the details are identical we refer the reader to {\cite[Section 6.2]{lutsko-toth_19}}.

\subsection{Proof of \eqref{etaone}}

Given a $\gamma \in \{2\}\cup \{5,\dots\}$, a signature $\underline{\epsilon}$ (recall the definition of a signature given at the end of Subsection \ref{ss:Markovian Flight Process}) compatible with the definition of a pack, and a fixed label $3 < k < \gamma$. Let $V_1, V_2 \in \Omega$ and let $\varpi$ be a pack with signature $\ueps$ and $\wt w_{k-2} = V_1$ and $\wt w_{k+1} = V_2$ (we assume $V_1$ and $V_2$ are compatible with this definition). 

\begin{itemize}
  \item On $0^- < t \le \wt\tau_{k-1}$ - $Z^{(k)}(t) = Y(t)$, conditioned such that $\wt w_{k-2} = V_1$.
  \item On $\wt\tau_{k-1} < t \le \wt\tau_k$ - $Z^{(k)}(t)$ is constructed like the $Z$-process, conditioned such that the final velocity is $\wt w_{k} \in  \Omega_{V_2}$
  \item On $\wt\tau_k < t < \wt\tau_{\gamma}$ - $Z^{(k)}(t)=Y(t)$ a Markovian flight process starting at $Z^{(k)}(\wt\tau_k)$, conditioned such that $\wt w_{k+1} = V_2$.
\end{itemize}
On $\{\eta_j = \delta_{j,k}: 1\le j \le \gamma\}$ - $Z^{(k)}$ is distributed like $Z$. The reason for conditioning on $V_1$ and $V_2$ is to ensure the following three parts are independent:

\begin{align}
  \begin{aligned} \label{decomp}
    & (Z^{(k)}(t) : 0^- < t \le \wt\tau_{k-3}) = (Y(t):0^- < t \le \wt\tau_{k-3}),    \\
    & (Z^{(k)}(\wt\tau_{k-3} +t) - Z^{(k)}(\wt\tau_{k-3}) : 0 \le t \le \wt\tau_{k}-\wt\tau_{k-3}),    \\
    & (Z^{(k)}(\wt\tau_k + t ) - Z(\wt\tau_k) : 0 \le t < \theta^+-\wt\tau_k ).
  \end{aligned}
\end{align}

Let $A_{a,a}^{(k)}$, $1 \le a \le 3$ be the event that the $a$-th part of the trajectory is $r$-\emph{inconsistent}. For $1 \le a< b \le 3$ we denote $A_{a,b}^{(k)}$ the event that the $a$ and $b$-th parts are $r$-\emph{incompatible}.
Therefore to prove \eqref{etaone} we will bound

\begin{align}
  \label{twelve bounds}
  \begin{aligned}
    & \max_{\underline{\epsilon}, k,V_1,V_2}\condprobab{\{\wh\eta_k  = 1\} \cap A^{(k)}_{a,b}}{\underline{\epsilon}, V_1, V_2},  \\
    & \max_{\underline{\epsilon}, k,V_1,V_2}\condprobab{\{\wt\eta_k  = 1\} \cap \{\wh\eta_k  = 0\} \cap A^{(k)}_{a,b}}{\underline{\epsilon}, V_1, V_2}, 
  \end{aligned}
  \qquad\qquad\qquad\qquad
  a,b=1,2,3.
\end{align}

\subsection{Bounds}
First notice that $A_{1,1}^{(k)}, A_{3,3}^{(k)}$ and $A_{1,3}^{(k)}$ involve only Markovian segments hence the following estimates follow readily from Lemmas \ref{lem:greenbounds}, \ref{lem:compbounds}, \ref{lem:direct}, and \ref{lem:indirect}:

\begin{align}
  \label{twelve bounds}
  \begin{aligned}
    & \max_{\underline{\epsilon}, k,V_1,V_2}\condprobab{\{\wh\eta_k  = 1\} \cap A^{(k)}_{a,b}}{\underline{\epsilon},V_1,V_2} \le C\gamma^3 r^2,  \\
    & \max_{\underline{\epsilon}, k,V_1,V_2}\condprobab{\{\wt\eta_k  = 1\} \cap \{\wh\eta_k  = 0\} \cap A^{(k)}_{a,b}}{\underline{\epsilon},V_1,V_2} \le C\gamma^3 r^2, 
  \end{aligned}
  \qquad\qquad\qquad\qquad
  a,b=1,3.
\end{align}
Therefore there remain $6$ bounds.

Note that during middle segment in \eqref{decomp} the velocity of $Z^{(k)}(t)$ is restricted to only three possible velocities. Thus one component of the velocity remains unchanged throughout this segment. Therefore the middle segment can only be $r$-inconsistent if two of the path segments are shorter than $Cr$ for some constant $C<\infty$. Thus

\begin{align}
  \label{A22}
  \begin{aligned}
    & \condprobab{ \{\wh\eta_k  = 1\} \cap A^{(k)}_{2,2}}{\underline{\epsilon},V_1,V_2 } \le  C r^2, \\
    & \condprobab{\{\wt\eta_k  = 1\} \cap \{\wh\eta_k  = 0\} \cap A^{(k)}_{2,2}}{\underline{\epsilon}, V_1,V_2 } \le  C r^2. 
  \end{aligned}
\end{align}

It remains to prove

\begin{align}
  \label{A12-A32}
  \begin{aligned}
    &\condprobab{\{\wh\eta_k  = 1\} \cap A^{(k)}_{b,2}}{\underline{\epsilon} ,V_1,V_2 } \le C\gamma r^2, \\
    & \condprobab{\{\wt\eta_k  = 1\} \cap \{\wh\eta_k  = 0\} \cap A^{(k)}_{b,2}}{\underline{\epsilon},V_1,V_2 }\le  C\gamma r^2,
  \end{aligned}
  \qquad\qquad\qquad\qquad
  b=1,3.
\end{align}
We will only prove \eqref{A12-A32} for $b=3$ as the proof for $b=1$ is the same. Given a set $A \subset \R^3$ define the following occupation measures for the third part of \eqref{decomp}

\begin{align*}
  G^{(k)}_{\ueps}(A) := & \condexpect {\#\{1\le j\le\gamma-k: Z^{(k)}(\wt\tau_{j+k}) - Z^{(k)}(\wt\tau_k)\in A\} } {\eps_{k+j}: 1\le j\le\gamma-k,V_2},\\
                        & \condexpect {\#\{1\le j\le\gamma-k: \wt{Y}(\wt\tau_j)\in A\} } {\eps_{k+j}: 1\le j\le\gamma-k,V_2},\\
  H^{(k)}_{\ueps}(A) := & \condexpect {\left|\{\tau_j \le \theta: Z^{(k)}(t) - Z^{(k)}(\wt\tau_k)\in A\}\right| } {\eps_{k+j}: 1\le j\le\gamma-k,V_2},\\
                        & \condexpect {\left|\{0\le t\le \tau_{\gamma-k}: \wt{Y}(t)\in A\}\right| } {\eps_{k+j}: 1\le j\le\gamma-k,V_2},
\end{align*}  
where $t \mapsto \wt{Y}(t)$ is a Markovian flight process with initial velocity in $\Omega_{V_2}$. Similarly
\begin{align*}
  & \wh G^{(k)}_{\ueps}(A) := \condexpect{\#\{1\le j\le3: Z^{(k)}(\wt\tau_{k-j})-Z^{(k)}(\wt\tau_{k})\in A\} \cdot \wh\eta_k}{\ueps,V_1,V_2},\\
  & \wh H^{(k)}_{\ueps}(A) := \condexpect {\abs{\{\wt\tau_{k-3}\le t\le \wt\tau_k: Z^{(k)}(t)-Z^{(k)}(\wt\tau_{k})\in A\}} \cdot \wh\eta_k} {\ueps,V_1,V_2},\\
& \wt G^{(k)}_{\ueps}(A) := \condexpect {\#\{1\le j\le3: Z^{(k)}(\wt\tau_{k-j})-Z^{(k)}(\wt\tau_{k})\in A\} \cdot \wt\eta_k \cdot (1-\wh\eta_k)} {\ueps,V_1,V_2}, \\
& \wt H^{(k)}_{\ueps}(A) := \condexpect {\abs{\{\wt\tau_{k-3}\le t\le \wt\tau_k: Z^{(k)}(t)-Z^{(k)}(\wt\tau_{k})\in A\}} \cdot \wt\eta_k \cdot (1-\wh\eta_k)} {\ueps,V_1,V_2}. 
\end{align*}
As the middle and last parts in \eqref{decomp} are independent the following bounds apply

\begin{align}
  \label{bound for A23}
  \begin{aligned}
    &\condprobab{\{\wh\eta_k  = 1\} \cap A^{(k)}_{3,2}}{\ueps ,V_1,V_2}\le %\\
    %&\hskip38mm \le
    Cr^{-1}\left(\int_{\R^3}G^{(k)}_{\ueps} (B_{x, 2r}) \wh H^{(k)}_{\ueps} (dx)
    +\int_{\R^3}H^{(k)}_{\ueps} (B_{x, 3r}) \wh G^{(k)}_{\ueps} (dx)\right),\\
    &\condprobab{\{\wt\eta_k  = 1\} \cap \{\wh\eta_k  = 0\} \cap A^{(k)}_{3,2}}{\ueps ,V_1,V_2 } \le \\
    &\hskip36mm\le 
    Cr^{-1}\left(\int_{\R^3} G^{(k)}_{\ueps} (B_{x, 2r}) \wt H^{(k)}_{\ueps} (dx)
    +\int_{\R^3}H^{(k)}_{\ueps} (B_{x, 3r}) \wt G^{(k)}_{\ueps} (dx)\right).
\end{aligned}
\end{align}

% \begin{align}
%   \label{bound for A23}
%   \begin{aligned}
%     &\condprobab{\{\wh\eta_k  = 1\} \cap A^{(k)}_{3,2}}{\ueps ,V_1,V_2}
%     \le 
%     Cr^{-1}\int_{\R^3}G^{(k)}_{\ueps} (B_{x, 2r}) \wh H^{(k)}_{\ueps} (dx)
%     +
%     Cr^{-1}\int_{\R^3}H^{(k)}_{\ueps} (B_{x, 3r}) \wh G^{(k)}_{\ueps} (dx)
% \\
% &
% \condprobab{
% \{\wt\eta_k  = 1\} \cap \{\wh\eta_k  = 0\} \cap A^{(k)}_{3,2}}{\ueps ,V_1,V_2 }
% \le 
% \\
% &
% \hskip38mm
% \le 
% Cr^{-1}\int_{\R^3}
% G^{(k)}_{\ueps} (B_{x, 2r}) \wt H^{(k)}_{\ueps} (dx)
% +
% Cr^{-1}\int_{\R^3}
% H^{(k)}_{\ueps} (B_{x, 3r}) \wt G^{(k)}_{\ueps} (dx)
% \end{aligned}
% \end{align}

By \eqref{G bound} there exists a constant $C<\infty$ such that

\begin{align}\label{F bound}
  G_{\ueps}^{(k)}(B_{x,2r}) \le CF(x), \qquad  \qquad H_{\ueps}^{(k)}(B_{x,2r})\le C F(x)
\end{align}
where $F:\R^3 \to \R_+$

\begin{align*}
  F(x) = r \{\abs{x} \le r \} +\frac{r^3}{\abs{x}^2}\{r < \abs{x} \le  1 \} +\frac{r^3}{\abs{x}}\{\abs{x} >1\} + re^{-c\abs{x}}\one\{\exists t>0 : B_{x,2r}  \cap tV_2\}\{\abs{x} >r\}.
\end{align*}
For simplicity we will only treat the first term on the right hand side in the second line of \eqref{bound for A23} (this is the most difficult), the other terms can be dealt with similarly. 

% First suppose $B_{x,2r} \cap tw \neq \emptyset$ for some $t>0$ and $w \in \Omega_{V_2}$ and further suppose $\abs{x} > c_1r$. Choose $c_1$ large enough that

% \begin{equation*}
%   \{Z^{(k)}(t) - Z^{(k)}(\tau_k) : \tau_{k-2} \le t \le \tau_{k} \} \cap B_{xr,2r} = \emptyset.
% \end{equation*}
% Therefore,

% \begin{align*}
% \wt{H}_{\ueps}^{(k)}(B_{xr,2r}) \le r \probab{ \{\exists \tau_{k-3} \le t  < \tau_{k-2} :Z^{(k)}(t) - Z^{(k)}(\tau_k)  \cap B_{xr,2r} \}}       
% \end{align*}

% We consider the second part of the trajectory in \eqref{decomp}. This part comprises at most three different velocities, hence there is one component of the velocity which remains constant throughout the trajectory. Thus the event

% \begin{equation*}
%   \{\exists \tau_{k-3} \le t \le \tau_{k} : Z^{(k)}(t) -Z^{(k)}(\tau_k) \in B_{x,2r}\}
% \end{equation*}
% implies either $\tau_k -\tau_{k-3} < Cr$ for some constant $C<\infty$, or $\wt w_{k+1}=-\wt w_{k-2}$. If $\wt w_{k+1} = -\wt w_{k-2}$ then, as these segments are parallel, this event implies $\xi_{k-1} +\xi_{k-2}< Cr$ for some constant $C<\infty$. Since the time spent inside the ball is at most $r$, we can conclude that for such an $x$, $\wt{H}_{\ueps}^{(k)}(B_{x,3r})<Cr^2$.

Since during the middle section of \eqref{decomp} one component of the velocity does not change sign we can conclude

\begin{align}
\label{grbooooooo-loc}
\begin{aligned}
&
\wh G^{(k)}_{\ueps} (B_{0,s}),\wt G^{(k)}_{\ueps} (B_{0,s})
\le 
Crs, 
&&
\wh H^{(k)}_{\ueps} (B_{0,s}) ,\wt H^{(k)}_{\ueps} (B_{0,s})
\le 
Crs, 
\end{aligned}
\end{align}
and

\begin{align}
\label{grboo-glob}
\begin{aligned}
& \wh G^{(k)}_{\ueps} (\R^3), \wt G^{(k)}_{\ueps} (\R^3)\le Cr, 
&& \wh H^{(k)}_{\ueps} (\R^3), \wt H^{(k)}_{\ueps} (\R^3) \le Cr.
\end{aligned}
\end{align}

First note that by \eqref{grbooooooo-loc}

\begin{align*}
  \int_{\abs{x}>r} re^{-c\abs{x}}\one\{\exists t>0 : B_{x,2r}  \cap tV_2\} \wt{H}_{\ueps}^{(k)}(dx) &\le Cr^2 \int_{\abs{x}>r} e^{-c\abs{x}}\one\{\exists t>0 : B_{x,2r}  \cap tV_2\} dx\\
  &\le Cr^4 \int_{t > r} e^{-c\abs{tV_2}}dt \le Cr^4
\end{align*}
and

\begin{align*}
  \int_{\abs{x}>1}  \frac{r^3}{\abs{x}} \wt{H}_{\ueps}^{(k)} (B_{x,2r}) \le Cr^4.
\end{align*}
Finally let $\wt{F}(u) = r \{u \le r \} +\frac{r^3}{u^2}\{r < u \le  1 \}$, then by applying integration by parts
\begin{align*}
  \int_{\{\abs{x}<1\}} \wt{F}(\abs{x}) \wt{H}_{\ueps}^{(k)}(dx) &\le C \int_0^1 \wt{F}(u) d\wt{H}_{\ueps}^{(k)}(B_{0,u})\\
            & = Cr^3 \wt{H}_{\ueps}^{(k)}(B_{0,1}) - C \int_0^1 \wt{H}_{\ueps}^{(k)}(B_{0,u}) \wt{F}'(u)du\\
            &\le Cr^4 + Cr^4 \int_r^1 u^{-2}du\\ 
            &\le Cr^4 +Cr^3.
\end{align*}
\eqref{A12-A32} follows by inserting these bounds into \eqref{bound for A23}.

\subsection{Proof of Theorem \ref{thm:main-technical} - concluded}

The proof of Theorem \ref{thm:main-technical} now follows the same lines as {\cite[Section 7]{lutsko-toth_19}} repeated here for completeness.

Let $\{t \mapsto Y(t)\}$ be a Markovian flight process. Let $\{t \mapsto Z(t)\}$ be a coupled forgetful process. We split $\{t \mapsto Z(t)\}$ into i.i.d legs $(Z_n(t) : 0 \le t \le \theta_n)$, each associated to an i.i.d pack $\varpi_n = \left(\gamma_n;  \{\wt\xi_{n,j}\}_{j=1}^{\gamma}, \{\wt\beta_{n,j}\}_{j=1}^{\gamma},\{\wt w_{n,j}\}_{j=1}^{\gamma}\right)$. In addition, to each leg $(Z_n(t):0\le t \le \theta_n)$ we associate a wind-tree process coupled to that leg $(\cX_n(t):0 \le t \le \theta_n)$. From these components we construct the concatenated auxilliary process

\begin{align}
\cX(t) = \sum_{k=1}^{\nu_t} \cX(\theta_n)+\cX_{\nu_t+1}(\{t\}).
\end{align} 
Note that $t \mapsto \cX(t)$ is \emph{not} a physical process. Each leg is independent of the others. Finally let $t \mapsto X(t)$ be the true wind-tree process, coupled to $t \mapsto Y(t)$ and $ t \mapsto Z(t)$ as in Section \ref{ss:Joint Construction}.

We will use Propositions \ref{prop:intra-leg} and \ref{prop:inter-leg - beyond} to prove that until time $T=T(r)=\ordo(r^{-2})$ the processes $t\mapsto X(t)$, $t\mapsto \cX(t)$, and $t\mapsto Z(t)$ coincide with high probability. 

For this define the (discrete) stopping times
\begin{align*}
  & \rho := \min\{n: \cX_n(t)\not\equiv Z_n(t), 0\le t \le \theta_n\} \\
  & \sigma := \min\{n: \max\{\one_{\wt W_n}, \one_{\wh W_n}>0\}=1\}, 
\end{align*}
and note that by construction
\begin{align*}
\inf\{t: Z(t)\not= X(t)\}\ge\Theta_{\min\{\rho,\sigma\}-1}.
\end{align*}

\begin{lemma}
\label{lem: Theta is large}
Let $T=T(r)$ such that $\lim_{r\to\infty} T(r) =\infty$ and $\lim_{r\to\infty} r^{2} T(r) =0$. Then 
\begin{align}
\label{Theta is large}
\lim_{r\to 0}
\probab{\Theta_{\min\{\rho,\sigma\}-1}<T}=0. 
\end{align}
\end{lemma}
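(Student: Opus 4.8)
The goal is to show that the first time the forgetful (and hence physical) process decouples from the Markovian flight process, measured in the concatenated time variable $\Theta$, exceeds $T$ with probability tending to $1$. Since $\Theta_{\min\{\rho,\sigma\}-1}$ is bounded below the decoupling time, it suffices to show that $\min\{\rho,\sigma\}$ is typically large compared to the number of legs fitting into time $T$.

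\textbf{Plan.} The two stopping times $\rho$ and $\sigma$ are both first-success times for rare events indexed by the leg number $n$. By Proposition \ref{prop:intra-leg}, the per-leg intra-leg mismatch probability $\probab{\cX_n(t)\not\equiv Z_n(t): 0\le t\le\theta_n}$ is at most $Cr^2$, and by Proposition \ref{prop:inter-leg - beyond} each inter-leg mismatch event $\wt W_n,\wh W_n$ has probability at most $Cr^2$. First I would use the union bound over legs to estimate, for any integer $N$,
\begin{equation*}
  \probab{\min\{\rho,\sigma\}\le N}\le \sum_{n=1}^{N}\left(\probab{\cX_n\not\equiv Z_n}+\probab{\wt W_n}+\probab{\wh W_n}\right)\le C N r^2.
\end{equation*}
The remaining ingredient is to relate the continuous time budget $T$ to the leg count: I would choose $N=N(r)$ so that on one hand $Nr^2\to 0$, and on the other hand $\Theta_{N}\ge T$ with high probability. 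Since the leg durations $\theta_n$ are i.i.d. with exponential tails (see \eqref{gammabnd}), $\expect{\theta_1}=:\mu<\infty$, so by the law of large numbers $\Theta_N\approx \mu N$; concretely pick $N=\lfloor 2T/\mu\rfloor$.

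\textbf{Execution.} With this choice I would decompose
\begin{equation*}
  \probab{\Theta_{\min\{\rho,\sigma\}-1}<T}\le \probab{\min\{\rho,\sigma\}\le N}+\probab{\Theta_N<T},
\end{equation*}
valid because if $\min\{\rho,\sigma\}-1> N$, i.e. $\min\{\rho,\sigma\}> N+1$, and simultaneously $\Theta_N\ge T$, then $\Theta_{\min\{\rho,\sigma\}-1}\ge\Theta_N\ge T$. The first term is bounded by $CNr^2\le C'Tr^2\to 0$ under the hypothesis $\lim_{r\to 0}r^2T(r)=0$. For the second term, $\probab{\Theta_N<T}=\probab{\Theta_N<T}$ with $N\sim 2T/\mu$ and $T\to\infty$; since $\Theta_N/N\to\mu$ almost surely and $T\le \tfrac{\mu}{2}N$ eventually, a standard large-deviation or Chebyshev estimate on the i.i.d. sum $\Theta_N=\sum_{k=1}^N\theta_k$ (using the exponential tail from \eqref{gammabnd}, which gives finite variance and a Cramér bound) shows $\probab{\Theta_N<T}\to 0$. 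Combining the two estimates yields \eqref{Theta is large}.

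\textbf{Main obstacle.} The genuinely substantive inputs, the $O(r^2)$ per-leg mismatch bounds, are already supplied by Propositions \ref{prop:intra-leg} and \ref{prop:inter-leg - beyond}, so the delicate geometry is done. The remaining work is the balancing act in the choice of $N$: one must simultaneously keep $Nr^2\to 0$ (forcing $N=o(r^{-2})$, which is exactly why the regime $T=o(r^{-2})$ is needed) while guaranteeing $\Theta_N\ge T$ with high probability (forcing $N\gtrsim T/\mu$). The hypothesis $r^2T(r)\to 0$ is precisely what makes both constraints compatible. The main quantitative care is therefore in the lower tail estimate for $\Theta_N$; this is routine given the exponential tails in \eqref{gammabnd}, but it is where one must be slightly careful to ensure the concatenated-time fluctuations do not spoil the comparison, and it is the only step not already reduced to a cited proposition.
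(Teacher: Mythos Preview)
Your proposal is correct and follows essentially the same approach as the paper: the paper decomposes with $N=2\expect{\theta}^{-1}T$, bounds $\probab{\rho\le N}$ and $\probab{\sigma\le N}$ separately by the union bound and Propositions~\ref{prop:intra-leg} and~\ref{prop:inter-leg - beyond} to get $Cr^2T$ each, and controls $\probab{\Theta_N<T}$ by a large-deviation bound $Ce^{-cT}$. Your justification of the decomposition has a harmless off-by-one (you only need $\min\{\rho,\sigma\}-1\ge N$, not $>N$), but the inequality you wrote is valid as stated.
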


\begin{lemma}
\label{lem: Z is close to Y}
Let $T=T(r)$ such that $\lim_{r\to\infty} T(r) =\infty$ and $\lim_{r\to\infty} r^{2} T(r) =0$. Then for any $\delta>0$
\begin{align}
\label{Z is close to Y}
\lim_{r\to 0}
\probab{\max_{0\le t \le T}\abs{Y(t)-Z(t)}>\delta \sqrt{T}}=0. 
\end{align}
\end{lemma}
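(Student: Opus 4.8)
The plan is to decompose the coupled pair $(Y,Z)$ along the i.i.d.\ legs of Subsection~\ref{ss:Concatenation - beyond} and to argue that $Y$ and $Z$ can only drift apart on those legs that carry a direct mismatch. Write $D_n$ for the net displacement discrepancy accumulated over the $n$-th leg (the difference of the two net leg displacements). Since $Y$ and $Z$ are coupled so as to share flight times and, while their velocities coincide, the same collision data, on a leg entered in the coupled state and carrying no direct mismatch the velocity processes $W$ and $U$ stay identified, so $D_n=0$; and on any leg $\abs{D_n}\le 2\theta_n$ because both processes move at unit speed. As the instantaneous discrepancy $\abs{Y(t)-Z(t)}$ can change by at most $2\theta_n$ within a leg (triangle inequality plus unit speed), and this within-leg change also vanishes on mismatch-free legs, one obtains the pathwise bound
\begin{align}
  \label{sup-by-sum}
  \max_{0\le t\le T}\abs{Y(t)-Z(t)} \le 2\sum_{n=1}^{\nu_T+1}\theta_n\,\one\{\text{leg } n \text{ carries a direct mismatch}\},
\end{align}
so it suffices to control the expectation of the right-hand side. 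The recoupling of $Y$ to $Z$ at each leg boundary (so that successive legs may be treated independently and $D_n$ vanishes off the mismatch event) is exactly what the forgetful construction of Subsection~\ref{ss:Joint Construction} is designed to provide.

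The central estimate is the \emph{sharp} first-moment bound
\begin{align}
  \label{key-estimate}
  \expect{\theta_n\,\one\{\text{leg } n \text{ carries a direct mismatch}\}} \le Cr .
\end{align}
I would prove \eqref{key-estimate} by a union bound over the collision label $j$ at which a direct mismatch can occur, writing $\theta_n=\sum_i\wt\xi_{n,i}$ and exploiting that $\{\eta_j=1\}$ is a \emph{localized} event: by the simple geometric fact of Section~\ref{s:Proof of Intra} it forces $\wt\xi_{j-1}<Cr$, and it depends only on the data near label $j$. Hence all but finitely many of the summands $\wt\xi_{n,i}$ are independent of $\eta_j$ and contribute $\expect{\wt\xi_{n,i}}\expect{\eta_j}=\cO(r)$ each (using $\expect{\eta_j}\le Cr$, as in Lemma~\ref{lem:direct}), while the few summands near $j$ contribute $\cO(r)$ as well; summing over $j\le\gamma$ and absorbing the resulting polynomial-in-$\gamma$ factor with the exponential tail \eqref{gammabnd} yields \eqref{key-estimate}. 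The point is to avoid the naive Cauchy--Schwarz estimate $\expect{\theta_n\,\one\{\cdots\}}\le\sqrt{\expect{\theta_n^2}\,\probab{\text{mismatch in leg } n}}=\cO(\sqrt r)$, which would only close the argument under the stronger hypothesis $rT\to0$ rather than under $r^2T\to0$.

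Granting \eqref{key-estimate}, the i.i.d.\ leg structure gives $\expect{\sum_{n=1}^{N}\theta_n\,\one\{\cdots\}}\le CNr$. Since $\expect{\theta}<\infty$ and positive, the number of legs completed by time $T$ obeys $\nu_T\le CT$ with probability tending to $1$ (a standard renewal/concentration estimate built on \eqref{gammabnd}). Combining this with \eqref{sup-by-sum} and Markov's inequality gives
\begin{align}
  \probab{\max_{0\le t\le T}\abs{Y(t)-Z(t)}>\delta\sqrt{T}} \le \frac{C\,T\,r}{\delta\sqrt{T}}+o(1) = \frac{C}{\delta}\sqrt{r^2T}+o(1)\longrightarrow 0
\end{align}
as $r\to0$, which is exactly \eqref{Z is close to Y}. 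The main obstacle is the sharp bound \eqref{key-estimate}: the remaining steps are bookkeeping, but matching the hypothesis $r^2T\to0$ hinges on recognizing a direct mismatch as a local, low-probability event essentially independent of the total length of the leg in which it sits, so that no square-root is lost.
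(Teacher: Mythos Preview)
Your proposal is correct and takes essentially the same approach as the paper. The paper bounds $\max_{0\le t\le T}\abs{Y(t)-Z(t)}\le\sum_{j=1}^{\nu_T+1}\eta_j\bigl(\sum_{i=j}^{\gamma_{\nu'_j}}\xi_i\bigr)$, summing over \emph{collision indices} and using the time from the mismatch to the end of its leg, whereas you sum over \emph{legs} and use the full leg length $2\theta_n$ times the mismatch indicator; this is a coarser but equivalent accounting. Both arguments hinge on the same sharp first-moment estimate---the paper's $\expect{\eta_j\sum_{i=j}^{\gamma_{\nu'_j}}\xi_i}\le Cr$ versus your \eqref{key-estimate}---followed by Markov's inequality and a renewal/large-deviation bound on the number of steps (respectively legs) by time $T$, giving $C\delta^{-1}r\sqrt{T}\to 0$. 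Your explicit remark that Cauchy--Schwarz would lose a square root and only yield the result under $rT\to 0$ is exactly the point; the paper achieves the sharp bound more tersely by invoking the exponential tail \eqref{gammabnd} directly, while you spell out the locality/independence reasoning behind it.
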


% \noindent
% {\bf Remark:}
% Actually, \eqref{Z is close to Y} holds under the much weaker condition $\lim_{r\to\infty} r \log\log T =0$. This can be achieved by applying the LIL rather than a WLLN type of argument to bound 
% \\
% $\max_{0\le t \le T}\abs{Y(t)-Z(t)}$ in the proof of Lemma \ref{lem: Z is close to Y}, below. However, since the condition of Lemma \ref{lem: Theta is large} can not be much relaxed, in the end we would not gain much with the extra effort. 

\begin{proof}
[Proof of Lemma \ref{lem: Theta is large}]
\begin{align}
\notag
\probab{\Theta_{\min\{\rho,\sigma\}-1}<T}
&
\le 
\probab{\rho\le 2 \expect{\theta}^{-1}T}
+
\probab{\sigma\le 2 \expect{\theta}^{-1}T}
+
\probab{\sum_{j=1}^{2 \expect{\theta}^{-1}T} \theta_j<T}
\\
\label{trivi-1}
&
\le
Cr^2 T
+
Cr^2 T
+
C e^{-c T}, 
\end{align}
where $C<\infty$ and $c>0$. The first term on the right hand side of \eqref{trivi-1} is bounded by union bound and \eqref{intra-exc} from Proposition \ref{prop:intra-leg}. Likewise the second term is bounded  by union bound  Proposition \ref{prop:inter-leg - beyond}. In bounding the third term we use a  large deviation upper bound for the sum of independent $\theta_j$-s. 

Finally \eqref{Theta is large} readily follows from \eqref{trivi-1}. 
\end{proof}

\begin{proof}
[Proof of Lemma \ref{lem: Z is close to Y}]
Note first that 
\begin{align*}
\max_{0\le t \le T}\abs{Y(t)-Z(t)}
\le 
\sum_{j=1}^{\nu_T+1}\eta_j\left( \sum_{i=j}^{\gamma_{\nu^\prime_j}}\xi_i\right), 
\end{align*}
with $\nu_T$ and $\eta_j$ defined in \eqref{tau and nu for Y}, respectively, \eqref{eta} and $\nu^{\prime}_j$ is $\nu_j$ from \eqref{Concat labels} (the label of the leg containing $j$). Hence, 
\begin{align}
  \notag
  \probab{\max_{0\le t \le T}\abs{Y(t)-Z(t)}>\delta \sqrt{T}}
  &\le 
  \probab{\sum_{j=1}^{2T}\eta_j\left( \sum_{i=j}^{\gamma_{\nu^\prime_j}}\xi_i\right)>\delta \sqrt{T}}+\probab{\nu_T>2T}\\
  \label{trivi-2}
  &\le C\delta^{-1}\sqrt{T}r + e^{-cT}, 
\end{align}
with $C<\infty$ and $c>0$. 
The first term on the right hand side of \eqref{trivi-2} is bounded by Markov's inequality and the bound
\begin{align*}
\expect{\eta_j \left( \sum_{i=j}^{\gamma_{\nu^\prime_j}}\xi_i\right)}\le C r. 
\end{align*} 
To see this recall the exponential tail bound for $\gamma$ \eqref{gammabnd}. The bound on the second term  follows from a straightforward large deviation estimate on $\nu_T\sim POI(T)$. 

Finally \eqref{Z is close to Y} readily follows from \eqref{trivi-2}. 

\end{proof}

\noindent \eqref{X to Y} is a direct consequence of Lemmas \ref{lem: Theta is large} and \ref{lem: Z is close to Y} and this concludes the proof of  Theorem \ref{thm:main-technical}. 
\qed

\section*{Acknowledgements}

The work of BT was supported by EPSRC (UK) Fellowship EP/P003656/1 and by NKFI (HU) K-129170. CL was supported by EPSRC Studentship EP/N509619/1 1793795. We would like to thank Jens Marklof for helping identify some of the relevant literature.

\vskip2cm

\hbox{
\hskip9cm
\vbox{\hsize=7cm\noindent
{\sc Authors' address:}
\\
School of Mathematics
\\
University of Bristol
\\
Bristol, BS8 1TW
\\
United Kingdom
\\
{\tt chris.lutsko@bristol.ac.uk}
\\
{\tt balint.toth@bristol.ac.uk}
}
}

\end{document}